\theoremstyle{plain}
\newtheorem{theorem}{Theorem}[section]
\newtheorem{lemma}[theorem]{Lemma}
\newtheorem{corollary}[theorem]{Corollary}
\newtheorem{proposition}[theorem]{Proposition}
\theoremstyle{definition}
\newtheorem{assumption}[theorem]{Assumption}
\theoremstyle{remark}
\newtheorem{remark}[theorem]{Remark}
\numberwithin{equation}{section}
\newcommand{\bC}{\mathbb{C}}
\newcommand{\bN}{\mathbb{N}}
\newcommand{\bR}{\mathbb{R}}
\newcommand{\bH}{\mathbb{H}}
\newcommand{\bZ}{\mathbb{Z}}
\newcommand\cD{\mathcal{D}}
\newcommand\cG{\mathcal{G}}
\newcommand\cH{\mathcal{H}}
\newcommand\cM{\mathcal{M}}
\newcommand\cS{\mathcal{S}}
\def\dashint{\operatorname%
{\,\,\text{\bf--}\kern-.98em\DOTSI\intop\ilimits@\!\!}}
\begin{document}
\title[parabolic equations with local and non-local time derivatives]{An approach for weighted mixed-norm estimates for parabolic equations with local and non-local time derivatives}

\author[H. Dong]{Hongjie Dong}
\address[H. Dong]{Division of Applied Mathematics, Brown University, 182 George Street, Providence, RI 02912, USA}

\email{Hongjie\_Dong@brown.edu}


\author[D. Kim]{Doyoon Kim}
\address[D. Kim]{Department of Mathematics, Korea University, 145 Anam-ro, Seongbuk-gu, Seoul, 02841, Republic of Korea}

\email{doyoon\_kim@korea.ac.kr}

\thanks{D. Kim was supported by the National Research Foundation of Korea (NRF) grant funded by the Korea government (MSIT) (2019R1A2C1084683).}

\subjclass[2010]{35R11, 26A33, 35R05}

\keywords{parabolic equation, time fractional derivative, mean oscillation estimates, measurable coefficients}

\begin{abstract}
We give a unified approach to weighted mixed-norm estimates and solvability for both the usual and time fractional parabolic equations in nondivergence form when coefficients are merely measurable in the time variable. In the spatial variables, the leading coefficients locally have small mean oscillations.
Our results extend the previous result in \cite{MR3899965} for unmixed $L_p$-estimates without weights.
\end{abstract}
\maketitle

\section{Introduction}

In this paper, we consider parabolic equations in nondivergence form
\begin{equation}
                                \label{eq7.53}
- \partial_t u + a^{ij}(t,x) D_{ij} u + b^i(t,x) D_i u + c(t,x) u = f(t,x)
\end{equation}
as well as parabolic equations with a non-local type time derivative term of the form
\begin{equation}
							\label{eq7.52}
- \partial_t^\alpha u + a^{ij}(t,x) D_{ij} u + b^i(t,x) D_i u + c(t,x) u = f(t,x)
\end{equation}
in $(0,T) \times \bR^d$, where $\partial_t^\alpha u$ is the Caputo fractional derivative of order $\alpha \in (0,1)$. See Section \ref{sec2} for the definition of $\partial_t^\alpha u$.

This paper is a continuation of \cite{MR3899965}, in which we proved that for any given $f \in L_p\left((0,T) \times \bR^d \right)$, there exists a unique solution $u$ to the equation \eqref{eq7.52} in $(0,T) \times \bR^d$ with the zero initial condition, and $u$ satisfies
$$
\||\partial_t^\alpha u|+|u|+
|Du|+|D^2u|\|_{L_p\left((0,T) \times \bR^d \right)} \le N \|f\|_{L_p\left((0,T) \times \bR^d \right)}
$$
under the assumptions that the coefficients are bounded and measurable, and $a^{ij}=a^{ij}(t,x)$ satisfy the uniform ellipticity condition and have small (bounded) mean oscillations (small BMO) with respect to the space variables.
Such type of coefficients were first introduced by Krylov in \cite{MR2304157} for \eqref{eq7.53} and the corresponding divergence form equations in $L_p$ spaces. His proof is based on the mean oscillation argument together with the Fefferman--Stein sharp function theorem and the Hardy--Littlewood maximal function theorem.
The results in \cite{MR2304157} were generalized later in \cite{MR2352490} to the mixed-norm $L_p(L_q)$ spaces with $p\ge q$, and in \cite{MR3812104} to the weighted mixed-norm $L_p(L_q)$ spaces with arbitrary $p, q\in (1,\infty)$ and Muckenhoupt weights.
In these papers, the mean oscillation argument is used, and in particular, in \cite{MR3812104} a version of the Fefferman--Stein sharp function theorem is proved in weighted mixed-norm Lebesgue spaces by using the extrapolation theorem.
We note that for time-dependent equations, such mixed-norm estimates are desirable, for example, when one wants to have better integrability of traces of solutions for each time slice when studying linear or nonlinear equations.

To the best of our knowledge, Equation \eqref{eq7.52} in mixed-norm Lebesgue spaces was first considered in \cite{MR3581300} for any $\alpha\in (0,2)$, under the conditions that the leading coefficients $a^{ij}$ are piecewise continuous in time and uniformly continuous in the spatial variables. Very recently, the result in \cite{arXiv:1911.07437} was extended to weighted mixed-norm Lebesgue spaces for the fractional heat equation
$$
-\partial_t^\alpha u+ \Delta u=f.
$$
The proofs in \cite{MR3581300} and  \cite{arXiv:1911.07437} are based upon a representation formula in terms of the fundamental solution to the time fractional heat operator $-\partial_t^\alpha + \Delta$ together with a perturbation argument using the Fefferman--Stein sharp function theorem and the Hardy--Littlewood maximal function theorem. On the other hand, the proof in \cite{MR3899965} (also see \cite{MR4030286}) is quite different from those in \cite{MR3581300, arXiv:1911.07437}. It is based upon a modified level set argument used in \cite{MR1486629} in order to improve the integrability of solution iteratively. In particular, we did not use the representation formula, which enables us to treat more general operators as those in \cite{MR3899965, MR4030286} with coefficients merely measurable in time or in one of the spatial variables.
For other results in this direction, we refer the reader to \cite{CP92, P91, Za05} and the references therein.

In view of the results in \cite{MR3812104} and \cite{MR3581300, arXiv:1911.07437}, it is natural to ask whether the result in \cite{MR3899965} can be extended to the mixed-norm $L_p(L_q)$ spaces and whether it is possible to also include weights. Unfortunately, it turns out that these extensions cannot be made by using the technique of iteration and the level set argument in \cite{MR3899965}. In this paper, we give a definite answer to these two questions. In particular, our main theorem reads that under the same assumptions on the coefficients as in \cite{MR3899965}, for any $p,q\in (1,\infty)$ and Muckenhoupt weights $w_1(t) \in A_p(\bR), w_2(x) \in A_q(\bR^d)$, if $u$ satisfies \eqref{eq7.52} with the zero initial condition, then it holds that
$$
\||\partial_t^\alpha u|+|u|+|Du|+|D^2 u|\|_{L_{p,q,w}\left((0,T)\times\bR^d\right)} \leq N \|f\|_{L_{p,q,w}\left((0,T)\times\bR^d\right)},
$$
where $w=w_1(t)w_2(x)$ and $N$ is independent of $u$ and $f$. See Section \ref{sec2} for the definition of the $L_{p,q,w}$-norm. Furthermore, we show that for any $f\in L_{p,q,w}\left((0,T)\times\bR^d\right)$, there is a unique solution $u$ (in the appropriate weighted mixed-norm Sobolev space) to \eqref{eq7.52} with the zero initial condition.

For the proof, we adapt the mean oscillation argument in \cite{MR2304157, MR2352490, MR3812104} mentioned above.
For this, we need to establish a H\"older estimate of $D^2 v$, where $v$ satisfies a certain homogeneous equation with coefficients depending only on $t$.
Such an estimate can be obtained relatively easily for parabolic equations with the local time derivative term $u_t$ via somewhat standard local estimates.
However, if the non-local time derivative is present, the local estimates do not work when improving the regularity of $v$ because the non-local time derivative of $v$ depends on all past states of $v$.
To overcome the difficulty from the non-local effect in time, our strategy is to consider an infinite cylinder $(-\infty,t_0)\times B_r(x_0)$ instead of the usual parabolic cylinder $Q_r(t_0,x_0)$ used in the aforementioned papers, and apply the Hardy--Littlewood maximal function for strong maximal functions. We estimate the H\"older semi-norm of $D^2 v$ by applying a bootstrap argument which relies on the (unmixed) $L_p$ estimate and the Sobolev type embedding results obtained in \cite{MR3899965}. For $w:=u-v$, which satisfies a nonhomogeneous equation also in the infinite cylinder with the zero Dirichlet boundary condition, we first apply the Poincar\'e inequality to bound $w$ by $f$, and then use a cutoff argument with a sequence of cutoff functions and again use the $L_p$ estimate in \cite{MR3899965} in order to estimate $D^2 w$. This gives a new decomposition of the solution, which works for both \eqref{eq7.53} and \eqref{eq7.52}. Since the argument for the usual parabolic equation \eqref{eq7.53} is less involved, even though the solvability result was already obtained in \cite{MR3812104}, to illustrate the idea in a simple setting we present the proof of the mean oscillation estimate for \eqref{eq7.53} in Section \ref{sec3}.
That is, in this paper we not only extend the results in \cite{MR3899965} to the weighted mixed-norm case, but also present a new and alternative approach to obtaining the mean oscillation estimates for both \eqref{eq7.53} and \eqref{eq7.52}.

The remaining part of the paper is organized as follows.
In the next section, we introduce some notation and state the main result of the paper. In Section \ref{sec3}, we show the mean oscillation estimate for \eqref{eq7.53} by using the new decomposition. In Section \ref{sec4}, we derive the corresponding  mean oscillation estimate for \eqref{eq7.52}.
Finally, we complete the proof of the main theorem in Section \ref{sec5}.

\section{Notation and main results}
                            \label{sec2}

We first introduce some notation used throughout the paper.
For $\alpha \in (0,1)$ and $S\in \bR$, we denote
$$
I^\alpha_S u = \frac{1}{\Gamma(\alpha)} = \frac{1}{\Gamma(\alpha)} \int_S^t (t-s)^{\alpha-1} u(s,x) \, ds.
$$
We write
$\partial_t^\alpha u=\partial_t u$ if $\alpha = 1$ and
$$
\partial_t^\alpha u = \frac{1}{\Gamma(1-\alpha)} \int_0^t (t-s)^{-\alpha} \partial_t u(s,x) \, ds
$$
for $\alpha \in (0,1)$. Note that $\partial_t^\alpha u = \partial_t I_0^\alpha u$ for a sufficiently smooth $u$ with $u(0,x) = 0$.
In some places we use $\partial_t^\alpha u$ to indicate $\partial_t I_S^\alpha u$ for $u(S,x) = 0$, where $S \neq 0$.

For $\alpha \in (0,1]$, we denote the parabolic cylinder by
\begin{equation}
							\label{eq0210_01}
Q_{r_1, r_2}(t,x) = (t-r_1^{2/\alpha}, t) \times B_{r_2}(x), \quad Q_r(t,x) = Q_{r,r}(t,x).
\end{equation}
We often write $B_r$ and $Q_r$ for $B_r(0)$ and $Q_r(0,0)$.
For $-\infty<S<T<\infty$, $\Omega \subset \bR^d$, we denote the parabolic boundary of the cylinder $(S,T) \times \Omega$ by
$$
\partial_p \left((S,T) \times \Omega\right) = \left((S,T) \times \partial \Omega\right) \cup \{(t,x) \in \bR^{d+1}: t=S, \, x \in \bar\Omega \}.
$$

For $p \in (1,\infty)$ and $k \in \{1,2,\ldots\}$, let $A_p(\bR^k, dx)$ be the set of all non-negative functions $w$ on $\bR^k$ such that
$$
[w]_{A_p}:= \sup_{x_0 \in \bR^k, r>0} \left(\dashint_{B_r(x_0)} w(x) \, dx \right) \left(\dashint_{B_r(x_0)} \left(w(x)\right)^{\frac{-1}{p-1}} \, dx \right)^{p-1} < \infty,
$$
where $B_r(x_0) = \{ x \in \bR^k: |x-x_0| < r\}$.
Recall that $[w]_{A_p} \geq 1$.

For $w(t,x) = w_1(t) w_2(x)$, where $(t,x) \in \bR \times \bR^d$, and $w_1 \in A_p(\bR,dt)$, $w_2 \in A_q(\bR^d, dx)$, we set $L_{p,q,w}\left((0,T) \times \bR^d\right)$ to be the set of all measurable functions $f$ defined on $(0,T) \times \bR^d$ satisfying
$$
\left(\int_0^T \left( \int_{\bR^d} |f(t,x)|^p \, w_2(x) dx \right)^{q/p} w_1(t) \, dt\right)^{1/q} < \infty.
$$
If $p=q$ and $w = 1$, $L_{p,q,w}\left((0,T) \times \bR^d\right)$ becomes the usual $L_p\left((0,T) \times \bR^d\right)$.
We write $u \in \bH_{p,q,w,0}^{\alpha,2}\left((0,T) \times \bR^d\right)$ if there exists a sequence $\{u_n\}$ such that $u_n \in C_0^\infty\left([0,T] \times \bR^d\right)$, $u_n(0,x) = 0$, and
\begin{equation}
							\label{eq0316_01}
\|u_n - u\|_{p,q,w} + \|Du_n - Du\|_{p,q,w} + \|D^2u_n - D^2u\|_{p,q,w} + \|\partial_t^\alpha u_n - \partial_t^\alpha u\|_{p,q,w} \to 0
\end{equation}
as $n \to \infty$, where $\|\cdot\|_{p,q,w} = \|\cdot\|_{L_{p,q,w}\left((0,T) \times \bR^d\right)}$.
We often write $\bH_{p,w,0}^{\alpha,2}\left((0,T) \times \bR^d\right)$ if $p = q$.
In particular, we see that if $\alpha = 1$, $p=q$, and $w(t,x) = 1$, then
$$
\bH_{p,0}^{1,2}\left((0,T) \times \bR^d\right) = \{u \in W_p^{1,2}\left((0,T) \times \bR^d\right): u(0,x) = 0\},
$$
where
$$
W_p^{1,2}\left((0,T) \times \bR^d\right)  = \{u: u,Du,D^2u,\partial_t u \in L_p\left((0,T) \times \bR^d\right)\}.
$$
However, for $\alpha \in (0,1)$, as remarked in \cite[Remark 3.4]{MR3899965},
\begin{align*}
\bH_{p, 0}^{\alpha,2}\left((0,T) \times \bR^d\right) &\subsetneq \bH_p^{\alpha,2}\left((0,T) \times \bR^d\right)
\\
&\subsetneq \{u: u, Du, D^2u, \partial_t^\alpha u \in L_p\left((0,T) \times \bR^d\right)\}.
\end{align*}
See \cite{MR3899965} for more details about $\bH_p^{\alpha,2}((0,T) \times \Omega)$ and $\bH_{p,0}^{\alpha,2}((0,T) \times \Omega)$, $\Omega \subset \bR^d$.
We use the notation $u \in \bH_{p,0, \operatorname{loc}}^{\alpha,2}\left((0,T) \times \bR^d \right)$ to indicate that
$$
u \in \bH_{p,0}^{\alpha,2}\left((0,T) \times B_R \right)
$$
for any $R > 0$.
In particular, when $\alpha = 1$, by $u \in W_{p,\operatorname{loc}}^{1,2}\left((-\infty,T) \times \bR^d\right)$ we mean that $u \in W_p^{1,2}\left((-\infty,T) \times B_R\right)$ for any $R > 0$.
Likewise, $f \in L_{p,\operatorname{loc}}\left((0,T) \times \bR^d\right)$ means that $f \in L_p\left((0,T) \times B_R\right)$ for any $R > 0$.

In this paper, we use maximal functions and strong maximal functions defined, respectively, by
$$
\cM f (t,x) =  \sup_{Q_r(s,y) \ni (t,x)} \dashint_{Q_r(s,y)} |f(r,z)| \chi_\cD \, dz \, dr
$$
and
$$
\left(\cS \cM f\right) (t,x) = \sup_{Q_{r_1,r_2}(s,y) \ni (t,x)} \dashint_{Q_{r_1,r_2}(s,y)} |f(r,z)| \chi_\cD \, dz \, dr
$$
if $f$ is defined on $\cD \subset \bR^{d+1}$.

We now present our assumptions for the coefficients. Throughout the paper we assume that there exists $\delta \in (0,1)$ such that
\begin{equation}
							\label{eq0131_20}
a^{ij}(t,x)\xi_i \xi_j \geq \delta |\xi|^2, \quad | a^{ij}| \leq \delta^{-1}
\end{equation}
for any $(t,x) \in \bR^{d+1}$ and $\xi \in \bR^d$.
Without loss of generality, we assume that $a^{ij} = a^{ji}$.

We impose the following regularity assumption on $a^{ij}(t,x)$ with respect to $x \in \bR^d$.

\begin{assumption}[$\gamma_0$]
							\label{assum0210_1}
There is a constant $R_0 \in (0,1]$ such that, for each parabolic cylinder $Q_r(t_0,x_0) = (t_0 - r^{2/\alpha}, t_0) \times B_r(x_0)$ with $r \leq R_0$ and $(t_0,x_0) \in \bR^{d+1}$, we have
$$
\sup_{i,j} \dashint_{Q_r(t_0,x_0)} |a^{ij} - \bar{a}^{ij}(t)| \, dx \, dt \leq \gamma_0,
$$
where
$$
\bar{a}^{ij}(t) = \dashint_{B_r(x_0)} a^{ij}(t,y) \, dy.
$$
\end{assumption}

For the lower-order coefficients $b^i$ and $c$, we impose the following boundedness condition
$$
|b^i| \leq K_0, \quad |c| \leq K_0,
$$
where $K_0$ is a positive constant.

Here is the main theorem of the paper.

\begin{theorem}
							\label{thm0210_1}
Let $\alpha \in (0,1]$, $T \in (0,\infty)$, $p, q \in (1,\infty)$, $K_1 \in [1,\infty)$, $w = w_1(t) w_2(x)$, where
$$
w_1(t) \in A_p(\bR,dt), \quad w_2(x) \in A_q(\bR^d, dx), \quad [w_1]_{A_p} \leq K_1, \quad [w_2]_{A_q} \leq K_1.
$$
There exists $\gamma_0 = \gamma_0(d,\delta, \alpha, p,q,K_1) \in (0,1)$ such that, under Assumption \ref{assum0210_1} ($\gamma_0$), the following hold.

For any $u \in \bH_{p, q, w, 0}^{\alpha,2}\left((0,T) \times \bR^d\right)$ satisfying
\begin{equation}
							\label{eq0210_02}
- \partial_t^{\alpha} u + a^{ij} D_{ij} u + b^i D_i u + c u  = f
\end{equation}
in $(0,T) \times \bR^d$, we have
\begin{equation}
							\label{eq0226_01}
\|\partial_t^\alpha u\|_{p,q,w} + \|u\|_{p,q,w} + \|Du\|_{p,q,w} + \|D^2 u\|_{p,q,w} \leq N \|f\|_{p,q,w},
\end{equation}
where $\|\cdot\|_{p,q,w} = \|\cdot\|_{L_{p,q,w}\left((0,T)\times\bR^d\right)}$ and
$$
N = N(d,\delta, \alpha, p,q,K_1,K_0,R_0,T).
$$
Moreover, for any $f \in L_{p,q,w}\left((0,T) \times \bR^d\right)$, there exists a unique solution $u \in \bH_{p,q,w,0}^{\alpha,2}\left((0,T) \times \bR^d)\right)$ satisfying \eqref{eq0210_02}.
\end{theorem}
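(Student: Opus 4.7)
The plan is to establish the a priori estimate \eqref{eq0226_01} via a mean oscillation argument in weighted mixed-norm Lebesgue spaces, then obtain existence and uniqueness by a method of continuity. After a standard reduction that absorbs the lower-order terms, the technical heart of the proof is a pointwise mean oscillation bound on $D^2 u$ that can be fed into a weighted mixed-norm Fefferman--Stein sharp function theorem of the type proved in \cite{MR3812104}.

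The key geometric innovation is to work on an infinite cylinder of the form $(-\infty, t_0) \times B_{\kappa r}(x_0)$, whose infinite past is dictated by the memory of the non-local derivative $\partial_t^\alpha$. On such a cylinder I would split $u = v + w$, where $v$ captures the $x$-frozen homogeneous equation $-\partial_t^\alpha v + \bar{a}^{ij}(t) D_{ij} v = 0$ and $w := u - v$ satisfies a nonhomogeneous equation with right-hand side involving $f$ and the perturbation $(a^{ij} - \bar{a}^{ij}(t)) D_{ij} u$, with zero Dirichlet data. For $v$, the target is a H\"older estimate on $D^2 v$ in the inner cylinder; classical interior parabolic arguments fail because of the memory term, so I would run a bootstrap based on the unmixed $L_p$ estimates and Sobolev-type embeddings of \cite{MR3899965}, iterating regularity on a nested family of infinite cylinders so that the past of $v$ is never truncated. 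For $w$, first use the Poincar\'e inequality (valid because of the zero Dirichlet data) to control $\|w\|_{L_{p_0}}$ by $\|f\|_{L_{p_0}} + \|(a - \bar a) D^2 u\|_{L_{p_0}}$ for some $p_0 \in (1,p)$; then introduce a sequence of radial cutoffs $\{\eta_k\}$ and apply the unmixed $L_p$ estimate from \cite{MR3899965} to each $\eta_k w$, summing to obtain $\|D^2 w\|_{L_{p_0}}$ in the inner cylinder in terms of $\|f\|_{L_{p_0}}$ plus a small multiple $\gamma_0^{\sigma}$ of $\|D^2 u\|_{L_{p_0}}$ on a slightly larger cylinder. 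Combining the two yields a mean oscillation inequality of the schematic form
\begin{equation*}
\Bigl( \dashint_Q |D^2 u - (D^2 u)_Q| \Bigr) \le N \kappa^{-\mu} \bigl( \cS\cM |D^2 u|^{p_0} \bigr)^{1/p_0} + N \kappa^{\nu} \gamma_0^{\sigma} \bigl( \cS\cM |D^2 u|^{p_0} \bigr)^{1/p_0} + N \kappa^{\nu} \bigl( \cS\cM |f|^{p_0} \bigr)^{1/p_0},
\end{equation*}
where $\cS\cM$ is the strong maximal function, needed because of the anisotropy of the cylinders $Q_{r_1,r_2}$.

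Feeding this into the weighted mixed-norm Fefferman--Stein theorem together with the boundedness of $\cS\cM$ on $L_{p/p_0, q/p_0, w}$, I would first fix $\kappa$ large and then take $\gamma_0$ small so that the $\|D^2 u\|_{p,q,w}$ term from the right is absorbed into the left, giving \eqref{eq0226_01}. For existence and uniqueness, I would run a method of continuity between the model operator $-\partial_t^\alpha + \Delta$ (whose weighted mixed-norm solvability can be obtained from the unmixed $L_p$ theory in \cite{MR3899965} by extrapolation) and the general operator in \eqref{eq0210_02}, using the a priori estimate just proved. The main obstacle is the H\"older estimate for $D^2 v$: the memory term rules out the usual interior regularity, so the bootstrap through the $L_p$-theory of \cite{MR3899965} must be arranged so that each gain in regularity is carried out on a fresh infinite cylinder without losing the past of $v$. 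Once this is in place, the remainder of the proof is a careful adaptation of the Krylov-style mean oscillation machinery.
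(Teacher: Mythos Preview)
Your overall architecture matches the paper's: decompose $u=v+w$ on infinite-in-time cylinders, bootstrap $D^2 v$ to H\"older via the unmixed $L_p$ theory and Sobolev embeddings of \cite{MR3899965}, estimate $D^2 w$ via Poincar\'e plus cutoffs, then run Fefferman--Stein with the strong maximal function. Two points need correction.

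First, the cutoffs $\{\eta_k\}$ in the $D^2 w$ estimate are not radial but \emph{in time}, supported on dyadic shells $(t_0-2^{k+1}+1,\,t_0-2^k+1)$ reaching into the past. The whole difficulty is the memory of $\partial_t^\alpha$: applying $\eta_k(t)$ to $w$ produces a commutator $\int_{-\infty}^t (t-s)^{-\alpha-1}(\eta_k(t)-\eta_k(s))w(s,\cdot)\,ds$ that couples $w$ on the current shell to all earlier ones, and one closes a recursion $A_k\le N(|f|^{p_0})^{1/p_0}_{\text{shell}}+N_0\sum_{j\ge k+1}2^{-\alpha j}A_j$ by summing against $2^{-\alpha k}$. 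Spatial cutoffs do not see the memory and will not deliver this. (Relatedly, in your displayed oscillation inequality the $\kappa$-exponents are reversed relative to the paper's convention $\kappa\in(0,1/4)$: the H\"older term from $v$ carries the \emph{small} factor $\kappa^\sigma$, while the $f$ and $\gamma_0$ terms carry $\kappa^{-(d+2/\alpha)/p_0}$; one takes $\kappa$ small first, then $\gamma_0$ small.)

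Second, and more substantively, the phrase ``standard reduction that absorbs the lower-order terms'' hides a genuine step in the fractional case. After the mean oscillation argument and partition of unity you only reach $\|\partial_t^\alpha u\|_{p,q,w}+\|D^2 u\|_{p,q,w}\le N_0\|f\|_{p,q,w}+N_1\|u\|_{p,q,w}$, and there is no large parameter $\lambda$ or scaling trick available to kill $N_1\|u\|_{p,q,w}$. The paper instead proves a weighted bound $\|I^\alpha g\|_{p,q,w}\le NT^\alpha\|g\|_{p,q,w}$ (via the maximal function and extrapolation), whence $\|u\|_{p,q,w}\le NT^\alpha\|\partial_t^\alpha u\|_{p,q,w}$, and then splits $(0,T)$ into $m$ equal pieces with time cutoffs $\eta_j$; on each piece $(T/m)^\alpha$ is small enough to absorb, the commutator $h_j$ is controlled by the $I^{1-\alpha}$ bound in terms of $\|u\|$ on earlier pieces only, and one inducts forward in $j$. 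Without this step \eqref{eq0226_01} does not follow from the mean oscillation estimate alone.
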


\section{Mean oscillation estimates for equations with local time derivative}
                    \label{sec3}

Throughout the section we assume that $a^{ij} = a^{ij}(t)$, that is, functions of only $t$, satisfying the ellipticity condition \eqref{eq0131_20}.
Let $T \in (-\infty,\infty]$ and $1 < p_0 < \infty$. Let $u \in W_{p_0,\operatorname{loc}}^{1,2}\left((-\infty,T) \times \bR^d\right)$ be a solution to
\begin{equation}
							\label{eq0131_01}
- \partial_t u + a^{ij} D_{ij} u = f
\end{equation}
in $(-\infty,T) \times \bR^d$.
For $t_0\in (-\infty, T] \cap \bR$ and $r>0$, we decompose $u=v+w$ in $(-\infty,t_0)\times B_r$, where $w \in W_{p_0}^{1,2}\left((-\infty,t_0) \times B_r\right)$ satisfies
\begin{equation}
							\label{eq0131_02}
- \partial_tw + a^{ij}(t) D_{ij} w = f
\end{equation}
in $(-\infty,t_0)\times B_r$ with the zero boundary condition on $(-\infty,t_0)\times \partial B_r$, and $v \in W_{p_0}^{1,2}\left((-\infty,t_0) \times B_r\right)$ satisfies
\begin{equation}
							\label{eq0131_03}
- \partial_t v + a^{ij}(t) D_{ij} v = 0
\end{equation}
in $(-\infty,t_0)\times B_r$.

In this section by $Q_r(t_0,x_0)$ we mean the parabolic cylinder defined in \eqref{eq0210_01} with $\alpha = 1$. That is,
$$
Q_r(t_0,x_0) = (t_0-r^2,t_0) \times B_r(x_0).
$$
By using the usual iteration argument, we obtain the following estimate for $v$ satisfying \eqref{eq0131_03}.
See, for instance, \cite[Theorems 5.2.8 and 6.1.1]{MR2435520} or \cite[Lemma 5.6]{MR3812104}.

\begin{lemma}
							\label{lem0131_3}
Let $p_0\in (1,\infty)$, $t_0 \in \bR$, and $v\in W_{p_0}^{1,2}((-\infty,t_0)\times B_r)$ satisfy \eqref{eq0131_03} in $(-\infty,t_0) \times B_r$, $r > 0$.
Then
\begin{equation*}
[D^2v]_{C^{1/4,1/2}(Q_{r/2}(t_1,0))} \leq N r^{-1/2}
\left( |D^2v|^{p_0} \right)_{Q_r(t_1,0)}^{1/p_0}
\end{equation*}
for any $t_1 \leq
t_0$, where $N = N(d, \delta, p_0)$.
\end{lemma}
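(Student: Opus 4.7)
The plan is a standard bootstrap that takes crucial advantage of the fact that the coefficients $a^{ij}$ depend only on $t$. First I would reduce to the case $r=1$ and $t_1=0$ by translating in time and performing the parabolic rescaling $\tilde v(t,x) = v(t_1 + r^2 t, rx)$; this produces a solution of the same equation (with a rescaled $\tilde a^{ij}(t)$ still satisfying \eqref{eq0131_20}), and the factor $r^{-1/2}$ in the statement emerges when I undo the rescaling, because of the homogeneity of the spatial $1/2$-H\"older seminorm under dilation.

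Next, since $a^{ij}=a^{ij}(t)$ is independent of $x$, differentiating \eqref{eq0131_03} in the spatial variables shows that for every multi-index $\beta$ the function $D^\beta v$ solves the same homogeneous equation
$$
-\partial_t (D^\beta v) + a^{ij}(t) D_{ij}(D^\beta v) = 0
$$
in $(-\infty,0)\times B_1$. I would then invoke the standard interior $L_{p_0}$ estimate for such equations (as in \cite[Theorem 5.2.8]{MR2435520} or \cite[Lemma 5.6]{MR3812104}) on a finite chain of nested cylinders $Q_1 \supset Q_{\rho_1} \supset \cdots \supset Q_{\rho_k} \supset Q_{1/2}$. By induction on $k$ this gives, for any fixed $k$,
$$
\|D^{k+2} v\|_{L_{p_0}(Q_{\rho_k})} + \|\partial_t D^{k} v\|_{L_{p_0}(Q_{\rho_k})} \le N_k \left(|D^2 v|^{p_0}\right)_{Q_1}^{1/p_0}.
$$

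Choosing $k$ larger than $(d+2)/p_0$, the parabolic Sobolev embedding converts this $L_{p_0}$ control of many spatial derivatives into a $C^{1/2}$-in-$x$ bound on $D^2 v$ on $Q_{1/2}$ with the desired right-hand side. For the time regularity, I use the equation itself: with $k$ large enough so that $D^{k+2} v \in L_\infty(Q_{1/2})$, one has $\partial_t D^2 v = a^{ij}(t) D_{ij}(D^2 v) \in L_\infty(Q_{1/2})$, so $D^2 v$ is actually Lipschitz in $t$, a fortiori $C^{1/4}$ in $t$. Combining the two estimates yields the parabolic H\"older bound on $Q_{1/2}$, and rescaling produces the claimed $r^{-1/2}$ prefactor.

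The argument contains no serious obstacle; the whole approach hinges on the $x$-independence of the coefficients, which lets the differentiated functions $D^\beta v$ inherit the equation and lets the bootstrap succeed without any perturbation argument. The only care needed is bookkeeping: keeping track of the parabolic scaling to get the precise exponent $-1/2$ in $r$, and choosing the number of iterations so that Sobolev embedding delivers both the $C^{1/2}$-in-$x$ bound and the $L_\infty$ bound on $\partial_t D^2 v$ required for the $C^{1/4}$-in-$t$ estimate.
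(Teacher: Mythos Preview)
Your proposal is correct and is exactly the ``usual iteration argument'' the paper has in mind: the paper gives no proof of this lemma, merely citing \cite[Theorems 5.2.8 and 6.1.1]{MR2435520} and \cite[Lemma 5.6]{MR3812104}, which encode precisely the scheme you describe (scaling, differentiating in $x$ using that $a^{ij}=a^{ij}(t)$, iterating the interior $W^{1,2}_{p_0}$ estimate, and then applying parabolic Sobolev embedding). The only cosmetic remark is that your step for the $C^{1/4}$-in-$t$ bound via $\partial_t D^2 v = a^{ij}D_{ij}(D^2 v)\in L_\infty$ implicitly uses that the iteration also improves integrability (so that $D^4 v\in L_\infty$), which follows from the same bootstrap once you combine the spatial-derivative iteration with the parabolic embedding $W^{1,2}_{p_0}\hookrightarrow L_{p_1}$ for $p_1>p_0$.
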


For $w$ satisfying \eqref{eq0131_02}, we bound the $L_{p_0}$-norm of $D^2w$ on $Q_{1/2}$ by the sum of the averages of $f$ on cylinders of the form $Q_{r,1}$, which is in fact bounded by the strong maximal function of $|f|^{p_0}$.
To obtain such an estimate, we first bound the $L_{p_0}$-norm of $w$ on $Q_1$ by the sum of such averages.

\begin{lemma}
                            \label{lem0131_2}
Let $p_0\in (1,\infty)$, $t_0 \in \bR$, and $f \in L_{p_0}\left((-\infty,t_0) \times B_1\right)$. Let $w\in W_{p_0}^{1,2}((-\infty,t_0)\times B_1)$ be a solution to \eqref{eq0131_02} in $(-\infty, t_0) \times B_1$ with the zero boundary condition on $(-\infty,t_0) \times \partial B_1$.
Then we have
\begin{equation}
							\label{eq0212_01}
\|w\|_{L_{p_0}(Q_1(t_0,0))} \leq N \sum_{k=0}^\infty c_k \left(|f|^{p_0}\right)_{(t_0-2^{k+2}+2, t_0) \times B_1}^{1/p_0},
\end{equation}
where $N=N(d,\delta,p_0)$ is independent of $t_0$ and $\{c_k\}$ is a sequence satisfying
\begin{equation}
							\label{eq0212_04}
\sum_{k=0}^\infty c_k\le N=N(d,\delta,p_0).
\end{equation}
\end{lemma}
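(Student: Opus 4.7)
My plan is to translate the spectral gap of the Dirichlet Laplacian on $B_1$ into exponential in-time decay of $\|w(t,\cdot)\|_{L_{p_0}(B_1)}$, and then package that decay as a dyadic sum of cylinder averages of $f$. First I would test the equation $-\partial_t w+a^{ij}(t)D_{ij} w=f$ against $|w|^{p_0-2}w$ (replaced by $(|w|^2+\varepsilon)^{(p_0-2)/2}w$ when $p_0<2$, and Steklov-averaged in $t$ so that the derivative is legitimate for $w\in W^{1,2}_{p_0}$). Since $w=0$ on $\partial B_1$, integration by parts in $x$, the ellipticity \eqref{eq0131_20}, the Poincar\'e inequality on $B_1$ applied to $|w|^{p_0/2}$, and Young's inequality on the source term combine to give
\begin{equation*}
\frac{d}{dt}E(t)+\mu E(t)\le N\,\|f(t,\cdot)\|^{p_0}_{L_{p_0}(B_1)},\qquad E(t):=\|w(t,\cdot)\|^{p_0}_{L_{p_0}(B_1)},
\end{equation*}
with $\mu=\mu(d,\delta,p_0)>0$ and $N=N(d,\delta,p_0)$.

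Because $w\in W_{p_0}^{1,2}((-\infty,t_0)\times B_1)$, the non-negative function $E$ is absolutely continuous and belongs to $L^1((-\infty,t_0))$, so $\liminf_{t\to-\infty} e^{\mu t}E(t)=0$ along a suitable sequence. Integrating $(e^{\mu t}E)'\le N e^{\mu t}\|f(t,\cdot)\|^{p_0}_{L_{p_0}(B_1)}$ up to $t$ therefore yields
\begin{equation*}
E(t)\le N\int_{-\infty}^{t} e^{-\mu(t-s)}\|f(s,\cdot)\|^{p_0}_{L_{p_0}(B_1)}\,ds.
\end{equation*}
Integrating this in $t$ over $(t_0-1,t_0)$ and using Fubini (the inner $t$-integral of $e^{-\mu(t-s)}$ is bounded by $\mu^{-1}e^{-\mu\max(0,t_0-1-s)}$) gives
\begin{equation*}
\|w\|^{p_0}_{L_{p_0}(Q_1(t_0,0))}\le N\int_{-\infty}^{t_0}e^{-\mu\max(0,t_0-1-s)}\|f(s,\cdot)\|^{p_0}_{L_{p_0}(B_1)}\,ds.
\end{equation*}

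To finish, I would decompose $(-\infty,t_0)$ as $(t_0-2,t_0)\cup\bigcup_{k\ge 1}(t_0-2^{k+2}+2,\,t_0-2^{k+1}+2)$. On the $k$-th annular piece ($k\ge 1$) one has $t_0-1-s\ge 2^{k-1}$, so the exponential weight is at most $e^{-\mu 2^{k-1}}$; enlarging each annular integration to the full cylinder $Q_k:=(t_0-2^{k+2}+2,t_0)\times B_1$ and converting to an average picks up a factor $|Q_k|\lesssim 2^{k+2}$. Summing (and writing $Q_0:=(t_0-2,t_0)\times B_1$) produces
\begin{equation*}
\|w\|^{p_0}_{L_{p_0}(Q_1(t_0,0))}\le N\sum_{k=0}^{\infty}\tilde c_k\,(|f|^{p_0})_{Q_k},
\end{equation*}
with $\tilde c_0=O(1)$ and $\tilde c_k\lesssim 2^{k+2}e^{-\mu 2^{k-1}}$ for $k\ge 1$. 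Taking the $p_0$-th root and invoking the elementary subadditivity $(\sum_k a_k)^{1/p_0}\le\sum_k a_k^{1/p_0}$ (valid for $p_0\ge 1$) produces \eqref{eq0212_01} with $c_k:=N\tilde c_k^{1/p_0}$, and the doubly exponential decay of $\tilde c_k$ makes \eqref{eq0212_04} immediate. I anticipate no serious obstacle: the only technical points, namely the admissibility of the test function $|w|^{p_0-2}w$ and the $t\to-\infty$ limit in the ODE step, are both standard for solutions in $W_{p_0}^{1,2}$.
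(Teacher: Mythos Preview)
Your argument is correct and shares the paper's core idea: the Dirichlet--Poincar\'e inequality on $B_1$ yields an exponential-in-time gain, which is then repackaged as a dyadic sum of averages of $f$. The implementations differ in two places. First, instead of your pointwise-in-$t$ differential inequality $E'+\mu E\le N\|f(t,\cdot)\|_{p_0}^{p_0}$ followed by the integrating-factor ODE step, the paper substitutes $U=we^{\varepsilon t}$ globally and proves the single space--time estimate $\|U\|_{L_{p_0}((-\infty,t_0)\times B_1)}\le N\|e^{\varepsilon t}f\|_{L_{p_0}}$ for the modified equation $-\partial_t U+a^{ij}D_{ij}U+\varepsilon U=e^{\varepsilon t}f$; this avoids having to justify that $E$ is absolutely continuous and to handle the limit at $t\to-\infty$. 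Second, for $p_0\in(1,2)$ the paper uses duality (solving the adjoint problem in $L_{q_0}$, $q_0=p_0/(p_0-1)\ge 2$, where the energy argument is already available) rather than your regularized test function $(|w|^2+\varepsilon)^{(p_0-2)/2}w$. Both routes are valid; your ODE formulation is perhaps more transparent, while the paper's substitution-plus-duality packaging sidesteps the mild technicalities you flag (Steklov averaging, the $\varepsilon$-regularization, and the $t\to-\infty$ limit). The dyadic decomposition step is the same in both proofs.
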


\begin{proof}
To derive \eqref{eq0212_01}, we prove that there exists $\varepsilon = \varepsilon(d,\delta,p_0)>0$ such that
\begin{equation}
							\label{eq0203_02}
e^{\varepsilon(t_0-1)} \|w\|_{L_{p_0}(Q_1(t_0,0))} \leq N \|e^{\varepsilon t} f(t,x)\|_{L_{p_0}((-\infty,t_0)\times B_1)},
\end{equation}
where $N = N(d,\delta,p_0)$.
Note that
\begin{align*}
&\|e^{\varepsilon t} f(t,x)\|_{L_{p_0}((-\infty,t_0)\times B_1)} = \left(\sum_{k=0}^\infty \int_{t_0-2^{k+2}+2}^{t_0-2^{k+1}+2} e^{\varepsilon p_0 t} \int_{B_1} |f(t,x)|^{p_0} \, dx \, dt\right)^{1/p_0}
\\
&\qquad\leq \left(\sum_{k=0}^\infty \int_{t_0-2^{k+2}+2}^{t_0-2^{k+1}+2} e^{\varepsilon p_0 (t_0 - 2^{k+1} + 2)}\int_{B_1} |f(t,x)|^{p_0} \, dx \, dt \right)^{1/p_0}
\\
&\qquad\leq N(d) e^{\varepsilon t_0} \left(\sum_{k=0}^\infty e^{\varepsilon p_0 (2 - 2^{k+1})} (2^{k+2}-2) \left(|f|^{p_0}\right)_{(t_0-2^{k+2}+2,t_0) \times B_1}\right)^{1/p_0}
\\
&\qquad\leq N e^{\varepsilon t_0} \sum_{k=0}^\infty e^{\varepsilon(2-2^{k+1})} (2^{k+2}-2)^{1/p_0}\left(|f|^{p_0}\right)_{(t_0-2^{k+2}+2,t_0) \times B_1}^{1/p_0}.
\end{align*}
Thus, by setting
$$
c_k = e^{\varepsilon (2-2^{k+1})} (2^{k+2} - 2)^{1/p_0},
$$
which satisfies \eqref{eq0212_04}, from \eqref{eq0203_02} we arrive at \eqref{eq0212_01}.

For the proof of \eqref{eq0203_02}, we show that there exists $\varepsilon_0 = \varepsilon_0(d,\delta,p_0)>0$ such that, for $\varepsilon \in [0,\varepsilon_0]$, $U \in W_{p_0}^{1,2}\left((-\infty,\tau_0) \times B_1\right)$, and $F \in L_{p_0}\left((-\infty,\tau_0) \times B_1\right)$ satisfying
\begin{equation}
							\label{eq0203_03}
- \partial_t U + a^{ij}(t) D_{ij} U + \varepsilon U = F
\end{equation}
in $(-\infty,\tau_0) \times B_1$ and $U=0$ on $(-\infty,\tau_0) \times \partial B_1$, where $\tau_0 \in (-\infty,\infty]$,
we have
\begin{equation}
							\label{eq0203_04}
\|U\|_{L_{p_0}\left((-\infty,\tau_0) \times B_1\right)} \leq N \|F\|_{L_{p_0}\left((-\infty,\tau_0) \times B_1\right)},
\end{equation}
where $N = N(d,\delta,p_0)$ is independent of $\tau_0$.
If this holds, one can obtain \eqref{eq0203_02} by taking $\varepsilon \in [0, \varepsilon_0]$, $\tau_0 = t_0$,
$$
U = w e^{\varepsilon t} \in W_{p_0}^{1,2}\left((-\infty,t_0) \times B_1\right), \quad F = e^{\varepsilon t} f \in L_{p_0}\left((-\infty,t_0) \times B_1\right).
$$

To show \eqref{eq0203_04}, we consider two cases.

{\em Case 1: $p_0\in [2,\infty)$.}
We multiply both sides of \eqref{eq0203_03} by $-p_0 |U|^{p_0-2} U$ and integrate in $(-\infty,\tau_0)\times B_1$.
By noting that the integral involving $\partial_t U$ is nonnegative, we have
$$
\int_{-\infty}^{\tau_0} \int_{B_1}
a_{ij}p_0(p_0-1)|U|^{p_0-2} D_j U D_i U \,dx \, dt - \varepsilon p_0 \int_{-\infty}^{\tau_0} \int_{B_1} |U|^{p_0} \, dx \, dt
$$
$$
\le \int_{-\infty}^{\tau_0} \int_{B_1} p_0 |F||U|^{p_0-1} \,dx \, dt.
$$
By using the zero boundary condition and the Poincar\'e inequality to $G(U) = |U|^{p_0/2}$ with respect to the spatial variables, as well as using the ellipticity condition, from the above inequality we get
\begin{align*}
&\int_{-\infty}^{\tau_0} \int_{B_1} |U|^{p_0} \, dx \, dt = \int_{-\infty}^{\tau_0} \int_{B_1} |G(U)|^2 \, dx \, dt
\\
&\leq N \int_{-\infty}^{\tau_0} \int_{B_1} |D \left(G(U)\right)|^2 \, dx \,dt
\leq N \int_{-\infty}^{\tau_0} \int_{B_1} \frac{p_0^2}{4} |DU|^2 |U|^{p_0-2} \, dx \, dt
\\
&\leq N \int_{-\infty}^{\tau_0} \int_{B_1} p_0(p_0-1)|DU|^2 |U|^{p_0-2} \, dx \, dt
\\
&\leq N \int_{-\infty}^{\tau_0} \int_{B_1} e^{\varepsilon t} |F||U|^{p_0-1} \, dx \,dt + N \varepsilon \int_{-\infty}^{\tau_0} \int_{B_1} |U|^{p_0} \, dx \,dt,
\end{align*}
where $N = N(d,\delta,p_0)$.
Upon choosing a sufficiently small $\varepsilon_0 = \varepsilon_0(d,\delta,p_0) > 0$, from the above inequalities and Young's inequality, we arrive at \eqref{eq0203_04} whenever $\varepsilon \in [0,\varepsilon_0]$.

{\em Case 2: $p_0\in (1,2)$.} We use a duality argument.
Take $\varepsilon_0$ from Case 1 and set $q_0=p_0/(p_0-1)\in (2,\infty)$.
For $\varepsilon \in [0,\varepsilon_0]$ and $g\in C^\infty_0 \left((-\infty,\tau_0)\times B_1 \right)$, let $U_1$ be the unique solution in $W_{q_0}^{1,2}\left((-\tau_0,\infty) \times B_1\right)$ of the equation
$$
-\partial_t U_1+a^{ij}(-t)D_{ij}U_1 + \varepsilon U_1 =g(-t,x)
$$
in $(-\tau_0,\infty)\times B_1$ with the zero boundary condition on $\partial_p \left( (-\tau_0,\infty) \times B_1 \right)$.
Indeed, to obtain the existence of such a solution $U_1$, we solve
\begin{equation}
							\label{eq0203_05}
- \partial_t U_1 + a^{ij}(-t) D_{ij} U_1 + \varepsilon U_1 = g(-t,x) \chi_{(-t_0,\infty)}
\end{equation}
in $\bR \times B_1$ with the zero boundary condition on $\bR \times \partial B_1$.
The solvability of this equation is guaranteed by the usual $L_p$-theory for parabolic equations with coefficients measurable in time (see, for instance, \cite{MR2771670, MR2644213}) and the estimate
\begin{equation}
							\label{eq0203_06}
\|U_1\|_{L_{q_0}\left((-\tau_0, \infty)\times B_1\right)} \leq N \|g(-\cdot,\cdot)\|_{L_{q_0}\left((-\tau_0, \infty)\times B_1\right)},
\end{equation}
proved in Case 1 when the time interval is $\bR$.
One can check the zero initial condition of $U_1$ at $t = -\tau_0$ by noting that the solution $U_1$ to the equation \eqref{eq0203_05} is zero for $t \leq -\tau_0$.
Then by using integration by parts, we have
$$
\int_{-\infty}^{t_0} \int_{B_1} U g  \,dx \, dt = \int_{-\infty}^{t_0} \int_{B_1} F(t,x) U_1(-t,x) \, dx \, dt.
$$
This together with \eqref{eq0203_06} immediately gives \eqref{eq0203_04}.
The lemma is proved.
\end{proof}

\begin{lemma}
                    \label{lem0203_1}
Let $p_0\in (1,\infty)$, $t_0 \in \bR$, and $f \in L_{p_0}\left( (-\infty,t_0) \times B_1 \right)$. Let $w\in W_{p_0}^{1,2}((-\infty,t_0)\times B_1)$ be a solution to \eqref{eq0131_02} in $(-\infty, t_0) \times B_1$ with the zero boundary condition on $(-\infty,t_0) \times \partial B_1$.
Then we have
\begin{equation}
							\label{eq0212_02}
\left( |D^2 w|^{p_0} \right)_{Q_{1/2}(t_1,0)}^{1/p_0} \le N \sum_{k=0}^\infty c_k \left( |f|^{p_0} \right)_{(t_0-2^{k+2}+2,t_0) \times B_1}^{1/p_0},
\end{equation}
where $N = N(d,\delta,p_0)$ and $\{c_k\}$ satisfies \eqref{eq0212_04}.
\end{lemma}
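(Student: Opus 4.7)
The plan is to derive \eqref{eq0212_02} by combining a local $L_{p_0}$-regularity estimate for $D^2 w$ with the averaging bound for $w$ already proved in Lemma \ref{lem0131_2}. Because the coefficients $a^{ij}(t)$ depend only on the time variable and $w$ solves the Cauchy--Dirichlet problem with zero lateral data on $(-\infty, t_0) \times B_1$, the standard parabolic $L_{p_0}$-theory is directly applicable on interior (in $x$) cylinders.

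First, I would establish a local estimate of the form
$$
(|D^2 w|^{p_0})_{Q_{1/2}(t_1, 0)}^{1/p_0} \leq N (|f|^{p_0})_{Q_1(t_1, 0)}^{1/p_0} + N (|w|^{p_0})_{Q_1(t_1, 0)}^{1/p_0}.
$$
To obtain this, introduce smooth cutoffs $\zeta(x)$ with $\zeta = 1$ on $B_{1/2}$ and $\supp \zeta \subset B_{3/4}$, and $\eta(t)$ with $\eta = 1$ on $[t_1 - 1/4, t_1]$ and $\supp \eta \subset [t_1 - 1/2, t_1]$, and consider $\tilde w := \eta \zeta w$, extended by zero. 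Then $\tilde w$ satisfies a Cauchy problem on $\bR \times \bR^d$ (with zero initial condition at $t = t_1 - 1/2$) whose right-hand side is $\eta \zeta f$ plus commutator terms linear in $w$, $Dw$, and derivatives of $\eta$ and $\zeta$. The $L_{p_0}$-theory for parabolic equations with measurable-in-time coefficients (as cited in the proof of Lemma \ref{lem0131_2}) yields a bound of $\|D^2 \tilde w\|_{L_{p_0}(\bR\times \bR^d)}$ by $\|f\|_{L_{p_0}(Q_1(t_1,0))} + \|w\|_{L_{p_0}(Q_1(t_1,0))} + \|Dw\|_{L_{p_0}(Q_1(t_1,0))}$. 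The $\|Dw\|$ term is then removed via the standard iterative cutoff argument on a sequence of nested concentric balls of radii increasing from $1/2$ toward $3/4$, combined with the interpolation inequality $\|Dw\|_{L_{p_0}} \leq \varepsilon \|D^2 w\|_{L_{p_0}} + N(\varepsilon) \|w\|_{L_{p_0}}$.

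Next, Lemma \ref{lem0131_2} supplies the second ingredient: for $t_1 = t_0$ (the intended case; otherwise apply the lemma to $w$ restricted to $(-\infty, t_1) \times B_1$),
$$
(|w|^{p_0})_{Q_1(t_0, 0)}^{1/p_0} \leq N \sum_{k=0}^\infty c_k (|f|^{p_0})_{(t_0 - 2^{k+2} + 2, t_0) \times B_1}^{1/p_0}.
$$
The leftover $(|f|^{p_0})_{Q_1(t_0, 0)}^{1/p_0}$ from Step 1 is absorbed into the $k=0$ term of the sum, since $Q_1(t_0, 0) \subset (t_0 - 2, t_0) \times B_1 = (t_0 - 2^{0+2}+2, t_0) \times B_1$. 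Adding the bounds from these two steps yields \eqref{eq0212_02}, with the sequence $\{c_k\}$ inherited from Lemma \ref{lem0131_2} and hence automatically satisfying \eqref{eq0212_04}.

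The main obstacle is the cutoff argument in the first step: arranging the cutoffs and the iteration so that only averages of $|f|^{p_0}$ and $|w|^{p_0}$, and \emph{not} of $|Dw|^{p_0}$, survive on the right-hand side. This is routine but slightly delicate, and parallels the standard derivation of interior $L_p$-estimates for second-order parabolic equations with coefficients depending only on time. Once Step 1 is in place, the remainder of the argument is a direct combination of results already on hand.
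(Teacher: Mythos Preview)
Your proposal is correct and follows essentially the same route as the paper: reduce to $t_1=t_0$, invoke the standard interior $L_{p_0}$-estimate $\left(|D^2 w|^{p_0}\right)_{Q_{1/2}(t_0,0)}^{1/p_0}\le N\left(|f|^{p_0}\right)_{Q_1(t_0,0)}^{1/p_0}+N\left(|w|^{p_0}\right)_{Q_1(t_0,0)}^{1/p_0}$, and then feed in Lemma~\ref{lem0131_2}. The only difference is that the paper simply cites this local estimate (e.g., \cite[Theorem 6.4.2]{MR2435520}) rather than sketching the cutoff-and-iteration derivation you outline.
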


\begin{proof}
Since the assumptions are satisfied if $t_0$ is replaced with $t_1$, we prove only the case $t_1 = t_0$.
By using the standard local $L_{p_0}$-estimate for parabolic equations with coefficients measurable in time (see, for instance, \cite[Theorem 6.4.2]{MR2435520}), we have
\begin{equation*}
\left( |D^2 w|^{p_0} \right)_{Q_{1/2}(t_0,0)}^{1/p_0}
\le N\left( |f|^{p_0} \right)_{Q_1(t_0,0)}^{1/p_0}
+ N (|w|^{p_0})_{Q_1(t_0,0)}^{1/p_0},
\end{equation*}
where $N = N(d,\delta, p_0)$.
This combined with Lemma \ref{lem0131_2} proves \eqref{eq0212_02}. The lemma is proved.
\end{proof}

\begin{proposition}
							\label{prop0204_1}
Let $p_0 \in (1,\infty)$, $T \in (-\infty,\infty]$, and $u \in W_{p_0, \operatorname{loc}}^{1,2}\left((-\infty,T) \times \bR^d\right)$ satisfy \eqref{eq0131_01} in $(-\infty,T) \times \bR^d$.
Then, for any $(t_0,x_0) \in (-\infty, T] \times \bR^d$ with $t_0 \in \bR$, $r \in (0,\infty)$, and $\kappa \in (0, 1/4)$,
we have
\begin{equation*}
\begin{aligned}
&\left(|D^2u - (D^2u)_{Q_{\kappa r}(t_0,x_0)}| \right)_{Q_{\kappa r}(t_0,x_0)}
\\
&\leq N \kappa^{1/2} (|D^2u|^{p_0})_{Q_r(t_0,x_0)}^{1/p_0} + N \kappa^{-(d+2)/p_0} \sum_{k=0}^\infty c_k \left( |f|^{p_0} \right)_{(t_0-(2^{k+2}-2)r^2,t_0) \times B_r(x_0)}^{1/p_0},
\end{aligned}
\end{equation*}
where $N = N(d,\delta,p_0)$ and $\{c_k\}$ satisfies \eqref{eq0212_04}.
\end{proposition}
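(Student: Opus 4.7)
The plan is to use the decomposition $u=v+w$ on $(-\infty,t_0)\times B_r(x_0)$ introduced right before Lemma \ref{lem0131_3}, where $w$ solves the inhomogeneous equation \eqref{eq0131_02} with zero lateral boundary data and $v=u-w$ solves the homogeneous equation \eqref{eq0131_03}. Since the coefficients depend only on $t$, translation in the spatial variable is free, so lemmas stated for origin-centered balls transfer to balls centered at $x_0$. I would begin from the splitting
\[
\bigl(|D^2u-(D^2u)_{Q_{\kappa r}(t_0,x_0)}|\bigr)_{Q_{\kappa r}(t_0,x_0)} \le \bigl(|D^2v-(D^2v)_{Q_{\kappa r}(t_0,x_0)}|\bigr)_{Q_{\kappa r}(t_0,x_0)} + 2\,(|D^2w|)_{Q_{\kappa r}(t_0,x_0)}
\]
and control the smooth part using Lemma \ref{lem0131_3} (which supplies the gain $\kappa^{1/2}$) and the rough part using Lemma \ref{lem0203_1} (which produces the loss $\kappa^{-(d+2)/p_0}$).

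For the $v$-term, I would apply Lemma \ref{lem0131_3} at the reduced radius $r/2$; this is legitimate because $v$ satisfies \eqref{eq0131_03} also on the smaller cylinder $(-\infty,t_0)\times B_{r/2}(x_0)$, and yields
\[
[D^2v]_{C^{1/4,1/2}(Q_{r/4}(t_0,x_0))} \le N r^{-1/2}\bigl(|D^2v|^{p_0}\bigr)_{Q_{r/2}(t_0,x_0)}^{1/p_0}.
\]
Since $\kappa<1/4$ implies $Q_{\kappa r}(t_0,x_0)\subset Q_{r/4}(t_0,x_0)$ and the parabolic diameter of $Q_{\kappa r}$ in each H\"older semi-norm contributes $N(\kappa r)^{1/2}$, this produces a bound of the form $N\kappa^{1/2}(|D^2v|^{p_0})_{Q_{r/2}(t_0,x_0)}^{1/p_0}$. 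The triangle inequality applied to $v=u-w$, combined with the volume ratio $|Q_r|/|Q_{r/2}|=2^{d+2}$, then gives
\[
(|D^2v|^{p_0})_{Q_{r/2}(t_0,x_0)}^{1/p_0} \le N\,(|D^2u|^{p_0})_{Q_r(t_0,x_0)}^{1/p_0} + (|D^2w|^{p_0})_{Q_{r/2}(t_0,x_0)}^{1/p_0},
\]
and Lemma \ref{lem0203_1}, rescaled to the domain $(-\infty,t_0)\times B_r(x_0)$ and centered at $x_0$, bounds the last term by exactly the sum $\sum_k c_k(|f|^{p_0})_{(t_0-(2^{k+2}-2)r^2,t_0)\times B_r(x_0)}^{1/p_0}$ from the statement.

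For the $w$-term, H\"older's inequality combined with the inclusion $Q_{\kappa r}\subset Q_{r/2}$ yields
\[
(|D^2w|)_{Q_{\kappa r}(t_0,x_0)} \le (|D^2w|^{p_0})_{Q_{\kappa r}(t_0,x_0)}^{1/p_0} \le N\kappa^{-(d+2)/p_0}(|D^2w|^{p_0})_{Q_{r/2}(t_0,x_0)}^{1/p_0},
\]
which is again bounded via Lemma \ref{lem0203_1}. Assembling the two estimates and using $\kappa^{1/2}\le\kappa^{-(d+2)/p_0}$ (valid since $\kappa<1$) to absorb the $\kappa^{1/2}$-factor multiplying the $f$-sum into the dominant $\kappa^{-(d+2)/p_0}$-factor yields the claimed inequality. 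The only substantive choice, and the point where I would expect care to be needed, is applying Lemma \ref{lem0131_3} at the reduced scale $r/2$ rather than at scale $r$: doing so keeps both the H\"older estimate for $D^2v$ and the average $(|D^2w|^{p_0})_{Q_{r/2}}^{1/p_0}$ inside the strictly interior cylinder $Q_{r/2}$, which is precisely the regime already handled by Lemma \ref{lem0203_1}, and thereby avoids having to establish a global-in-space $L_{p_0}$-bound for $D^2w$ up to $\partial B_r$.
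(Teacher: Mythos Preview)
Your proposal is correct and follows essentially the same approach as the paper: the same decomposition $u=v+w$, Lemma~\ref{lem0131_3} applied at the reduced scale $r/2$ to gain $\kappa^{1/2}$, the triangle inequality $v=u-w$ to pass back to $u$, and Lemma~\ref{lem0203_1} to control $(|D^2w|^{p_0})_{Q_{r/2}}^{1/p_0}$. Your closing remark about why one works at scale $r/2$ rather than $r$ identifies exactly the reason the paper makes that choice.
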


\begin{proof}
Thanks to translation with respect to the spatial variables and dilation, we assume that $x_0 = 0$ and $r=1$.
Since $u \in W_{p_0,\operatorname{loc}}^{1,2}\left((-\infty,T) \times \bR^d\right)$, we have $f \in L_{p_0}\left((-\infty,T) \times B_1\right)$. Thus one can find $w \in W_{p_0}^{1,2}\left((-\infty,t_0) \times B_1\right)$ satisfying \eqref{eq0131_02} in $(-\infty,t_0) \times B_1$ with the zero boundary condition on $(-\infty,t_0) \times \partial B_1$.
We note that the solvability follows from \eqref{eq0203_04}, the interior and boundary $W^{1,2}_{p}$-estimates (see, for instance, \cite{MR2771670}), and the method of continuity.
Set $v = u - w$.
Then $v$ belongs to $W_{p_0}^{1,2}\left((-\infty,t_0) \times B_1\right)$ and satisfies \eqref{eq0131_03} in $(-\infty,t_0) \times B_1$.
Since $\kappa < 1/4$, by Lemma \ref{lem0131_3} with $r = 1/2$ and the fact that $u=v+w$, we observe that
\begin{align*}
&\left(|D^2v - (D^2v)_{Q_{\kappa}(t_0,0)}| \right)_{Q_{\kappa}(t_0,0)} \leq 3 \kappa^{1/2} [D^2v]_{C^{1/4,1/2}\left(Q_{1/4}(t_0,0)\right)}\\
&\leq N \kappa^{\frac{1}{2}} \left(|D^2 v|^{p_0}\right)^{\frac{1}{p_0}}_{Q_{\frac 1 2}(t_0,0)} \leq N \kappa^{\frac{1}{2}} \left(|D^2 u|^{p_0}\right)^{\frac{1}{p_0}}_{Q_{\frac{1}{2}}(t_0,0)} + N \kappa^{\frac{1}{2}} \left(|D^2 w|^{p_0}\right)^{\frac{1}{p_0}}_{Q_{\frac{1}{2}}(t_0,0)}.
\end{align*}
This combined with Lemma \ref{lem0203_1} and the triangle inequality shows that
\begin{align*}
&\left(|D^2u - (D^2u)_{Q_{\kappa}(t_0,0)}| \right)_{Q_{\kappa}(t_0,0)}
\\
&\leq \left(|D^2v - (D^2v)_{Q_{\kappa}(t_0,0)}| \right)_{Q_{\kappa}(t_0,0)} + N \left(|D^2w| \right)_{Q_{\kappa}(t_0,0)}
\\
&\leq N \kappa^{\frac{1}{2}} \left(|D^2 u|^{p_0}\right)^{\frac{1}{p_0}}_{Q_{\frac{1}{2}}(t_0,0)}
+ N \kappa^{-\frac{d+2}{p_0}} \left(|D^2w|^{p_0} \right)^{\frac{1}{p_0}}_{Q_{\frac{1}{2}}(t_0,0)}
\\
&\leq N \kappa^{\frac{1}{2}} \left(|D^2 u|^{p_0}\right)^{\frac{1}{p_0}}_{Q_1(t_0,0)} + N \kappa^{-\frac{d+2}{p_0}} \sum_{k=0}^\infty c_k \left( |f|^{p_0} \right)_{(t_0-2^{k+2}+2,t_0) \times B_1}^{1/p_0},
\end{align*}
where $N = N(d,\delta,p_0)$.
The proposition is proved.
\end{proof}

\section{Mean oscillation estimates for equations with non-local time derivative}
                    \label{sec4}

Throughout the section we assume that $\alpha\in (0,1)$ and $a^{ij} = a^{ij}(t)$, that is, functions of only $t$, satisfying the ellipticity condition \eqref{eq0131_20}.
Let $ p_0\in (1,\infty)$ and $T \in (0,\infty)$.
Let $u \in \bH_{p_0,0,\operatorname{loc}}^{\alpha,2}\left((0,T) \times \bR^d\right)$ be a solution to
\begin{equation}
							\label{eq0525_01}
- \partial_t^\alpha u + a^{ij}(t) D_{ij} u= f(t,x)
\end{equation}
in $(0,T) \times \bR^d$.
Note that the zero initial condition $u(0,\cdot) = 0$ is implicitly imposed because $u \in \bH_{p_0,0,\operatorname{loc}}^{\alpha,2}\left((0,T) \times \bR^d\right)$.

For convenience, we extend $u$ and $f$ to be zero when $t\le 0$. For $t_0\in (0,T]$ and $r>0$, we decompose $u=v+w$ in $(0,t_0)\times B_r$, where $w$ is a weak solution to
\begin{equation}
							\label{eq1.01}
- \partial_t^\alpha w + a^{ij}(t) D_{ij} w= f(t,x)
\end{equation}
in $(0,t_0) \times B_r$ with the zero boundary condition on $\partial_p ((0,t_0)\times B_r)$, and $v$ satisfies
\begin{equation}
							\label{eq1.03}
- \partial_t^\alpha v + a^{ij}(t) D_{ij} v=0
\end{equation}
in $(0,t_0) \times B_r$.
Throughout the section, recall that, for $\alpha \in (0,1)$,
$$
Q_r(t_0,x_0) = (t_0 - r^{2/\alpha}, t_0) \times B_r(x_0).
$$

\subsection{Estimates of \texorpdfstring{$v$}{v}}

\begin{lemma}
                    \label{lem1}
Let $p_0\in (1,\infty)$, $t_0 \in (0,\infty)$, and $v\in \bH^{\alpha,2}_{p_0,0}((0,t_0)\times B_r)$ satisfy \eqref{eq1.03} in $(0,t_0) \times B_r$, $r > 0$. Then there exists
$$
p_1 = p_1(d, \alpha,p_0)\in (p_0,\infty]
$$
satisfying
\begin{equation}
							\label{eq0411_05}
p_1 > p_0 + \min\left\{\frac{2\alpha}{\alpha d + 2 - 2\alpha}, \alpha, \frac{2}{d} \right\}
\end{equation}
such that
\begin{equation}
							\label{eq0108_01}
\left( |D^2v|^{p_1} \right)_{Q_{r/2}(t_1,0)}^{1/p_1} \leq N
\sum_{j=1}^\infty j^{-(1+\alpha)} \left( |D^2v|^{p_0} \right)_{Q_r(t_1-(j-1)r^{2/\alpha},0)}^{1/p_0}
\end{equation}
for any $t_1 \leq t_0$,
where $N=N(d,\delta, \alpha,p_0)$ and
$$
\left( |D^2v|^{p_1} \right)_{Q_{r/2}(t_1,0)}^{1/p_1} = \|D^2 v\|_{L_\infty\left(Q_{r/2}(t_1,0)\right)} \quad \text{if} \quad p_1 = \infty.
$$
If $p_0 > d/2+1/\alpha$, then
\begin{equation}
							\label{eq0108_02}
[D^2 v]_{C^{\sigma \alpha/2,\sigma}(Q_{r/2}(t_1,0))} \leq N r^{-\sigma}
\sum_{j=1}^\infty j^{-(1+\alpha)} \left( |D^2v|^{p_0} \right)_{Q_r(t_1-(j-1)r^{2/\alpha},0)}^{1/p_0}
\end{equation}
for any $t_1 \leq
t_0$, where $\sigma = \sigma(d,\alpha,p_0) \in (0,1)$.
Moreover, if $p_1 < \infty$, then $v \in \bH_{p_1,0}^{\alpha,2}\left((0,t_0) \times  B_{r/2}\right)$.
\end{lemma}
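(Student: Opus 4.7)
The plan is to combine a cutoff argument with the global unmixed $L_p$-theory and the Sobolev-type embedding from \cite{MR3899965}, iterated in a bootstrap to gain integrability and eventually H\"older regularity. After translating and rescaling so that $t_1=t_0$ and $r=1$, pick $\zeta\in C_0^\infty(B_{3/4})$ with $\zeta\equiv 1$ on $B_{1/2}$ and $\eta\in C^\infty(\bR)$ with $\eta\equiv 0$ on $(-\infty,t_1-1]$ and $\eta\equiv 1$ on $[t_1-(1/2)^{2/\alpha},t_1]$. Put $V=\eta\zeta v$ and extend $v$ by zero to $t\le 0$; since $v\in\bH_{p_0,0}^{\alpha,2}$, the zero initial condition of $V$ is inherited. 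A single integration by parts in the convolution defining $\partial_t^\alpha(\eta\zeta v)$, using $v(0,\cdot)=0$ so that the boundary term vanishes, yields
\[
-\partial_t^\alpha V+a^{ij}(t)D_{ij}V=G_1+G_2\qquad\text{in }(0,t_0)\times\bR^d,
\]
with
\[
G_1=\eta(t)a^{ij}(t)\bigl(2D_i\zeta\, D_jv+v\,D_{ij}\zeta\bigr),\quad
G_2(t,x)=\frac{\alpha\,\zeta(x)}{\Gamma(1-\alpha)}\int_0^t\frac{\eta(t)-\eta(s)}{(t-s)^{\alpha+1}}\,v(s,x)\,ds.
\]
The decisive feature is that the integration by parts trades the naive factor $(t-s)^{-\alpha}\eta'(s)$ for the stronger singular kernel $(t-s)^{-\alpha-1}(\eta(t)-\eta(s))$, and the extra power is exactly what will produce the summable weight $j^{-(1+\alpha)}$.

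Apply the global $L_{p_1}$-estimate from \cite{MR3899965} at a target exponent $p_1>p_0$ to be chosen:
\[
\|D^2V\|_{L_{p_1}}\le N\bigl(\|G_1\|_{L_{p_1}}+\|G_2\|_{L_{p_1}}\bigr),
\]
and use $V\equiv v$ on $Q_{1/2}(t_1,0)$ to control $\|D^2v\|_{L_{p_1}(Q_{1/2}(t_1,0))}$. For $G_1$, the Sobolev-type embedding of \cite{MR3899965} upgrades the local $\bH_{p_0}^{\alpha,2}$-control of $v$ on the larger cylinder $Q_1(t_1,0)$ to $L_{p_1}$-control of $v$ and $Dv$; the three candidate gains in \eqref{eq0411_05} reflect the three regimes of the parabolic Sobolev inequality associated with the parabolic dimension $d+2/\alpha$, namely pure spatial gain, mixed spatial/temporal gain using one fractional derivative in $t$, and the near-$L^\infty$ subcritical regime. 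For $G_2$, split the memory integral over slabs $I_j=[t_1-j,t_1-(j-1)]$; on $I_j$ the kernel is bounded by $Nj^{-(\alpha+1)}$ and $|\eta(t)-\eta(s)|\le 1$, so Minkowski in the outer $L_{p_1}$ plus Poincar\'e (absorbing $v$ into $D^2v$ slab by slab, again via \cite{MR3899965}) produces exactly
\[
\sum_{j=1}^\infty j^{-(1+\alpha)}\bigl(|D^2v|^{p_0}\bigr)^{1/p_0}_{Q_1(t_1-(j-1),0)},
\]
and undoing the scaling restores general $r$.

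For the H\"older estimate when $p_0>d/2+1/\alpha$, iterate the preceding step finitely many times to push the effective exponent past the Morrey threshold of \cite{MR3899965}, which then delivers H\"older regularity of $D^2v$ of order $\sigma$ in $x$ and $\sigma\alpha/2$ in $t$ directly; the summation structure in $j$ is stable under such iteration because convolutions of sequences of the form $j^{-(1+\alpha)}$ decay at the same rate. The concluding assertion $v\in\bH_{p_1,0}^{\alpha,2}((0,t_0)\times B_{r/2})$ follows because $D^2v\in L_{p_1}$ together with the equation forces $\partial_t^\alpha v\in L_{p_1}$, and lower-order norms are handled by Sobolev and Poincar\'e. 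The main obstacle throughout is the infinite memory of $\partial_t^\alpha$, which rules out the usual parabolic localization in time; the integration-by-parts step above is what circumvents this by promoting the cutoff commutator kernel to $(t-s)^{-\alpha-1}$, so that the mass contributed by the cylinder $j$ units in the past is $\lesssim j^{-(1+\alpha)}$, summable exactly because $\alpha>0$. Without this gain, the series would diverge and the bootstrap would break down.
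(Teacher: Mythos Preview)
Your outline has a genuine structural gap in the treatment of the commutator terms $G_1$ and $G_2$. Both involve $v$ and $Dv$, not $D^2v$, and you propose to convert them to $D^2v$ via ``Poincar\'e (absorbing $v$ into $D^2v$ slab by slab).'' But $v$ carries \emph{no} boundary condition on $\partial B_r$: in the decomposition $u=v+w$ it is $w$ that vanishes on the lateral boundary, while $v$ is merely a solution of the homogeneous equation in the cylinder. Hence no Poincar\'e inequality is available to control $\|v\|_{L_{p_0}}$ (or $\|Dv\|_{L_{p_0}}$) by $\|D^2v\|_{L_{p_0}}$ on the slabs $(t_1-j,t_1-(j-1))\times B_1$. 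The same objection applies to the $G_1$ term: Sobolev embedding bounds $\|v\|_{L_{p_1}}+\|Dv\|_{L_{p_1}}$ by the full $\bH_{p_0}^{\alpha,2}$-norm, which still contains $\|v\|_{L_{p_0}}$ and $\|Dv\|_{L_{p_0}}$, so the right-hand side of your final estimate would not be purely in terms of $(|D^2v|^{p_0})^{1/p_0}$ over past cylinders as \eqref{eq0108_01} requires.

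The paper sidesteps this problem entirely by exploiting that $a^{ij}=a^{ij}(t)$: it uses only a time cutoff $\eta$ (no spatial cutoff), differentiates the equation twice in $x$, and works with the equation satisfied by $D^2(\eta v)$. The resulting commutator
\[
\cG(t,x)=\frac{\alpha}{\Gamma(1-\alpha)}\int_{-\infty}^{t}(t-s)^{-\alpha-1}\bigl(\eta(t)-\eta(s)\bigr)\,D^2v(s,x)\,ds
\]
already contains $D^2v$, so no absorption of lower-order terms is needed. Spatial localization is achieved through the interior $L_{p_0}$-estimate (Lemma~4.2 of \cite{MR3899965}) and the Sobolev embedding is applied to $D^2(\eta v)$ itself, giving the chain
\[
\|D^2v\|_{L_{p_1}(Q_{1/2})}\le N\|D^2(\eta v)\|_{\bH_{p_0}^{\alpha,2}((t_1-1,t_1)\times B_{3/4})}\le N\bigl(\|D^2v\|_{L_{p_0}(Q_1)}+\|\cG\|_{L_{p_0}(Q_1)}\bigr),
\]
from which the $j^{-(1+\alpha)}$ summation emerges exactly as you describe. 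Two minor points: the H\"older estimate \eqref{eq0108_02} in this lemma is for $p_0>d/2+1/\alpha$ directly and requires no iteration (the iteration you describe is the content of Proposition~\ref{prop1}); and the membership $v\in\bH_{p_1,0}^{\alpha,2}$ is \emph{not} implied by $v,Dv,D^2v,\partial_t^\alpha v\in L_{p_1}$ alone (see \cite[Remark~3.4]{MR3899965}) and requires a separate mollification argument.
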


\begin{proof}
Thanks to scaling, we only need to prove the assertions for $r=1$.
Note that $v$ can be extended as zero for $t \leq 0$.
Thus by Lemma 3.5 in \cite{MR3899965} $v$ belongs to $\bH_{p_0,0}^{\alpha,2} \left((S,t_0) \times B_1\right)$ for any $S \leq 0$.
We also see that $v$ satisfies \eqref{eq1.03} in $(S,t_0) \times B_1$.

Find an infinitely differentiable  function $\eta$ defined on $\bR$ such that
$$
\eta =
\left\{
\begin{aligned}
1 \quad &\text{if} \quad t \in (t_1-(1/2)^{2/\alpha},t_1),
\\
0 \quad &\text{if} \quad t \in \bR \setminus (t_1-1,t_1+1),
\end{aligned}
\right.
$$
and
$$
\left|\frac{\eta(t)-\eta(s)}{t-s}\right| \le N(\alpha).
$$
Then by Lemmas 3.6 and 4.4 (and the proof of the latter lemma)
in \cite{MR3899965},
\begin{equation}
							\label{eq0109_01}
\eta v, D(\eta v),
D^2(\eta v) \in \bH_{p_0,0}^{\alpha,2}\left((t_1-1,t_1)\times B_{3/4}\right)
\end{equation}
and $D^2(\eta v)$ satisfies
\begin{equation}
                \label{eq9.43}
-\partial_t^\alpha \left( D^2(\eta v) \right) + a^{ij} D_{ij} D^2(\eta v) = \cG
\end{equation}
in $(t_1-1,t_1)\times B_{3/4}$, where $\partial_t^\alpha = \partial_t I^{1-\alpha}_{t_1-1}$ and
$$
\cG(t,x) = \frac{\alpha}{\Gamma(1-\alpha)} \int_{-\infty}^t (t-s)^{-\alpha-1}\left(\eta(t) - \eta(s)\right) D^2 v(s,x) \, ds.
$$

If $p \leq 1/\alpha$, take $p_1$ satisfying
$$
p_1 \in \left(p_0, \frac{1/\alpha + d/2}{1/(\alpha p_0) + d/(2 p_0) -1}\right) \quad \text{if} \quad p_0 \leq d/2,
$$
$$
p_1 \in \left(p_0, p_0(\alpha p_0 + 1)\right) \quad \text{if} \quad p_0 > d/2.
$$
If $p_0 > 1/\alpha$, take $p_1$ satisfying
$$
p_1 \in \left(p_0, p_0 + 2p_0^2/d\right) \quad \text{if} \quad p_0 \leq d/2,
$$
$$
p_1 \in (p_0, 2p_0) \quad \text{if} \quad p_0 > d/2, \quad p_0 \leq d/2 + 1/\alpha,
$$
$$
p_1 = \infty \quad \text{if} \quad p_0 > d/2 + 1/\alpha.
$$
Note that $p_1$ satisfies \eqref{eq0411_05}
and the increment $\min \{2\alpha/(\alpha d + 2 - 2\alpha), \alpha, 2/d\}$ is independent of $p_0$.
By \eqref{eq0109_01} and the Sobolev type embeddings  obtained in \cite{MR3899965} we have
\begin{equation}
							\label{eq0109_02}
\eta v, D(\eta v),
D^2(\eta v) \in L_{p_1}\left((t_1-1,t_1) \times B_{3/4}\right).
\end{equation}
By the Sobolev embeddings again and Lemma 4.2 in \cite{MR3899965} for local $L_p$-estimates, we have
\begin{align}
							\label{eq0715_01}
&\|D^2 v\|_{L_{p_1}\left(Q_{1/2}(t_1,0)\right)} \le \|D^2(\eta v)\|_{L_{p_1}\left((t_1-1,t_1)\times B_{1/2}\right)}\nonumber
\\
&\le N\| |D^2(\eta v)| + |D^4(\eta v)| + |D_t^\alpha D^2 (\eta v)| \|_{L_{p_0}\left((t_1-1,t_1)\times B_{3/4}\right)}\nonumber
\\
&\le N \||D^2(\eta v)| + |\cG|\|_{L_{p_0}\left((t_1-1,t_1)\times B_1\right)} \le N \||D^2 v| + |\cG|\|_{L_{p_0}\left((t_1-1,t_1)\times B_1\right)},
\end{align}
where $N = N(d, \delta,
\alpha,p_0,p_1)$ and we used \eqref{eq9.43}.
We write
\begin{align*}
&\frac{\Gamma(1-\alpha)}{\alpha} \cG(t,x) = \int_{t-1}^t (t-s)^{-\alpha-1}\left( \eta(s) - \eta(t) \right) D^2 v(s,x) \, ds\\
&\quad + \int_{-\infty}^{t-1} (t-s)^{-\alpha-1}\left( \eta(s) - \eta(t) \right) D^2 v(s,x) \, ds := I_1(t,x)+I_2(t,x),
\end{align*}
where
\begin{align*}
|I_1(t,x)| \le N \int_{t-1}^t |t-s|^{-\alpha} |D^2 v(s,x)|\,ds= N \int_0^1 |s|^{-\alpha} |D^2 v(t-s,x)|\, ds,
\end{align*}
which implies
\begin{equation}
							\label{eq0715_02}
\|I_1\|_{L_{p_0}\left((t_1-1,t_1) \times B_1\right)} \le N \|D^2 v\|_{L_{p_0}\left((t_1-2,t_1)\times B_1\right)}.
\end{equation}
To estimate $I_2$, we see that
$\eta(s) = 0$ for any $s \in (-\infty, t-1)$ with $t \in (t_1-1,t_1)$.
Thus we have
$$
I_2(t,x) = -\eta(t) \int_{-\infty}^{t-1} (t-s)^{-\alpha-1} D^2 v(s,x) \, ds.
$$
Then,
\begin{align*}
|I_2(t,x)| &\le \int_{-\infty}^{t-1} |t-s|^{-\alpha-1} |D^2 v(s,x)| \, ds\\
&= \sum_{j=1}^\infty \int_{t-j-1}^{t-j} |t-s|^{-\alpha-1} |D^2 v(s,x)|\,ds\le \sum_{j=1}^\infty \int_{t-j-1}^{t-j } j^{-(\alpha+1)} |D^2 v(s,x)|\,ds.
\end{align*}
From this we have
\begin{equation*}
\|I_2\|_{L_{p_0}\left((t_1-1,t_1) \times B_1\right)}\le \sum_{j=1}^\infty j^{-(\alpha+1)} \left\| \int_{t-j-1}^{t-j} |D^2 v(s,x)| \, ds \right\|_{L_{p_0}\left((t_1-1,t_1) \times B_1\right)}.
\end{equation*}
Since $t_1 - 1 < t < t_1$,
$$
\int_{t-j-1}^{t-j} |D^2 v(s,x)|\, ds \leq \int_{t_1-j-2}^{t_1-j} |D^2 v(s,x)|\, ds.
$$
Hence, by H\"older's inequality,
\begin{equation*}
\|I_2\|_{L_{p_0}\left(Q_1(t_1,0)\right)}\le N\sum_{j=1}^\infty j^{-(\alpha+1)} \|D^2 v\|_{L_{p_0}\left((t_1-j-2,t_1-j)\times B_1\right)}.
\end{equation*}
Combining the above inequality, \eqref{eq0715_01}, and \eqref{eq0715_02}, we reach \eqref{eq0108_01} with $r=1$.

For the proof of \eqref{eq0108_02}, if $p_0 > d/2 + 1/\alpha$, we find $\tilde{p}_0$ such that $\tilde{p}_0 \leq p_0$ and $\tilde{p}_0 \in (d/2+1/\alpha, d+ 2/\alpha)$.
By \eqref{eq0109_01} and the Sobolev embeddings in \cite{MR3899965}, we have
\begin{equation*}
D^2(\eta v) \in C^{\sigma \alpha/2, \sigma}\left((t_1-1,t_1) \times B_{3/4}\right),
\end{equation*}
where $\sigma = 2 - (d+2/\alpha)/\tilde{p}_0 \in (0,1)$.
We then repeat the above steps from the inequalities in \eqref{eq0715_01} with $[D^2 v]_{C^{\sigma \alpha/2,\sigma}(Q_{1/2}(t_0,0))}$ in place of $\|D^2 v\|_{L_{p_1}\left(Q_{1/2}(t_0,0)\right)}$.

We now show that $v \in \bH_{p_1,0}^{\alpha,2}\left((0,t_0) \times B_{1/2}\right)$ when $p_1 < \infty$.
From \eqref{eq0109_02} and the equation \eqref{eq1.03} it follows that
\begin{equation*}
v, Dv, D^2 v, \partial_t^\alpha v \in L_{p_1}\left((0,t_0) \times B_{3/4}\right).
\end{equation*}
Note that as mentioned in  \cite[Remark 3.4]{MR3899965} this is not enough even to claim that $v$ belongs to $\bH_{p_1}^{\alpha,2}\left((0,t_0) \times B_{1/2}\right)$, which is a superset of $\bH_{p_1,0}^{\alpha,2}\left((0,t_0) \times B_{1/2}\right)$.
We take the mollification $v^{(\varepsilon)}$ of $v$ given in the proof of Proposition 3.2 in \cite{MR3899965}.
That is, we use
$$
v^{(\varepsilon)}(t,x) = \int_0^{t_0} \int_{B_1} \eta_\varepsilon(t-s,x-y) v(s,y) I_{0 < s < t_0} \, dy \, ds,
$$
where $\eta_\varepsilon(t,x) = \varepsilon^{-d-2/\alpha} \eta(t/\varepsilon^{2/\alpha},x/\varepsilon)$, $\eta(t,x)$ is an infinitely differentiable function defined in $\bR^{d+1}$ with compact support in $(0,1) \times B_1$ and $\int_{\bR^{d+1}} \eta \, dx \, dt = 1$.
By using the fact that $\eta(t,x) = 0$ for $t \leq 0$ and $v \in \bH_{p_0,0}^{\alpha,2}\left((0,t_0) \times B_1\right)$,
one can check that $v^{(\varepsilon)}(0,x) = 0$ and, for $(t,x) \in (0,t_0) \times B_{1/2}$,
$$
\partial_t^\alpha v^{(\varepsilon)}(t,x) = \int_0^{t_0} \int_{B_1} \eta_\varepsilon(t-s,x-y) \partial_t^\alpha v(s,y) \, dy \, ds,
$$
$$
D^m_x v^{(\varepsilon)}(t,x) = \int_0^{t_0} \int_{B_1} \eta_\varepsilon(t-s,x-y) D^m_x v(s,y) \, dy \, ds, \quad m=0,1,2.
$$
Then
$$
\|v^{(\varepsilon)} - v\|_{\bH_{p_1}^{\alpha,2}\left((0,t_0) \times B_{1/2}\right)} \to 0
$$
as $\varepsilon \to 0$.
This implies that $v \in \bH_{p_1,0}^{\alpha,2}\left((0,t_0) \times B_{1/2}\right)$. The lemma is proved.
\end{proof}

We need the following simple inequality.
\begin{lemma}
                    \label{lem2}
For any $\alpha>0$ and $k = 2,3,\ldots$, we have
$$
\sum_{j=1}^{k-1} j^{-(1+\alpha)}(k-j)^{-(1+\alpha)}\le N(\alpha) k^{-(1+\alpha)}.
$$
\end{lemma}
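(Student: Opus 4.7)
The plan is to exploit the symmetry $j \leftrightarrow k-j$ in the summand. Splitting the sum at $j = \lfloor k/2 \rfloor$, I would write
$$
\sum_{j=1}^{k-1} j^{-(1+\alpha)}(k-j)^{-(1+\alpha)} = \sum_{j=1}^{\lfloor k/2 \rfloor} j^{-(1+\alpha)}(k-j)^{-(1+\alpha)} + \sum_{j=\lfloor k/2\rfloor+1}^{k-1} j^{-(1+\alpha)}(k-j)^{-(1+\alpha)},
$$
and observe that the substitution $j \mapsto k-j$ turns the second piece into (at most) the first piece with the upper limit adjusted by one, so it suffices to bound the first piece.

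In the first sum, $j \leq k/2$ forces $k - j \geq k/2$, so
$$
(k-j)^{-(1+\alpha)} \leq (k/2)^{-(1+\alpha)} = 2^{1+\alpha}\, k^{-(1+\alpha)}.
$$
Pulling this factor out and using that $\alpha > 0$ ensures the series $\sum_{j=1}^\infty j^{-(1+\alpha)}$ converges to some $C(\alpha)<\infty$, I get
$$
\sum_{j=1}^{\lfloor k/2\rfloor} j^{-(1+\alpha)}(k-j)^{-(1+\alpha)} \leq 2^{1+\alpha} k^{-(1+\alpha)} \sum_{j=1}^{\infty} j^{-(1+\alpha)} = N(\alpha)\, k^{-(1+\alpha)}.
$$
Combining with the symmetric estimate for the second half of the sum yields the claim with $N(\alpha) = 2^{2+\alpha}\, C(\alpha)$.

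There is no real obstacle here; the argument is the standard ``peak splitting'' trick for convolution-type sums on $\mathbb{Z}_{\ge 1}$, and the convergence of $\sum j^{-(1+\alpha)}$ for $\alpha > 0$ is the only analytic input. The only mild care is in handling the middle index when $k$ is even (so that $j$ and $k-j$ coincide at $j=k/2$), which is absorbed harmlessly into either half.
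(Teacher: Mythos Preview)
Your argument is correct. Both your proof and the paper's rely on the convergence of $\sum_{j\ge 1} j^{-(1+\alpha)}$, but the routes differ. The paper multiplies through by $k^{1+\alpha}$ and uses the algebraic identity
\[
k^{1+\alpha} j^{-(1+\alpha)}(k-j)^{-(1+\alpha)} = \left(\frac{1}{j}+\frac{1}{k-j}\right)^{1+\alpha},
\]
then applies the convexity inequality $(a+b)^{1+\alpha}\le 2^\alpha(a^{1+\alpha}+b^{1+\alpha})$ to split the summand and sum each piece. Your approach instead exploits the $j\leftrightarrow k-j$ symmetry directly: on the range $j\le \lfloor k/2\rfloor$ the factor $(k-j)^{-(1+\alpha)}$ is bounded by $(k/2)^{-(1+\alpha)}$, and the remaining sum over $j$ is uniformly bounded. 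The paper's version is a one-line computation that avoids any parity bookkeeping; yours is perhaps more transparently ``why it works,'' and yields essentially the same constant $2^{2+\alpha}\zeta(1+\alpha)$ versus the paper's $2^{1+\alpha}\zeta(1+\alpha)$.
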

\begin{proof}
By using H\"older's inequality on $(1/j + 1/(k-j))^{1+\alpha}$, we have
\begin{align*}
&\sum_{j=1}^{k-1} k^{(1+\alpha)}j^{-(1+\alpha)}(k-j)^{-(1+\alpha)}
= \sum_{j=1}^{k-1} (1/j+1/(k-j))^{1+\alpha}\\
&\le 2^\alpha\sum_{j=1}^k \bigg(1/j^{1+\alpha}+1/(k-j)^{1+\alpha}\bigg)\le N(\alpha).
\end{align*}
The lemma is proved.
\end{proof}

\begin{proposition}
                            \label{prop1}
Let $1<p_0<p<\infty$, $t_0 \in (0,\infty)$, and $v\in \bH^{\alpha,2}_{p_0,0}((0,t_0)\times B_r)$ satisfy \eqref{eq1.03} in $(0,t_0) \times B_r$, $r > 0$. Then we have
\begin{align}
							\label{eq2.07}
\left( |D^2v|^{p} \right)_{Q_{r/2}(t_1,0)}^{1/p} \leq N
\sum_{j=1}^\infty j^{-(1+\alpha)} \left( |D^2v|^{p_0} \right)_{Q_{r}(t_1 -(j-1)r^{2/\alpha},0)}^{1/p_0}
\end{align}
for any $t_1 \leq t_0$,
where $N=N(d,\delta,\alpha,p,p_1)$. Furthermore,
\begin{align}
							\label{eq2.11}
[D^2 v]_{C^{\sigma \alpha/2,\sigma}(Q_{r/2}(t_1,0))} \leq N r^{-\sigma}
\sum_{j=1}^\infty j^{-(1+\alpha)} \left( |D^2v|^{p_0} \right)_{Q_r(t_1-(j-1)r^{2/\alpha},0)}^{1/p_0}
\end{align}
for any $t_1 \leq t_0$,
where $\sigma = \sigma(d,\alpha,p_0) \in (0,1)$.
\end{proposition}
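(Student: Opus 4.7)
The plan is to iterate Lemma~\ref{lem1} a finite number of times and then use H\"older's inequality on the bounded cylinder $Q_{r/2}(t_1,0)$. Observe from \eqref{eq0411_05} that each application of Lemma~\ref{lem1} increases the exponent by at least a positive constant $\varepsilon_0 = \varepsilon_0(d,\alpha)$ that does \emph{not} depend on the current exponent. Hence, setting $m := \lceil(p-p_0)/\varepsilon_0\rceil + 1$, successive applications of Lemma~\ref{lem1} produce a strictly increasing sequence $p_0 < p_1 < \dots < p_m$ with $p_m \geq p$. Once \eqref{eq2.07} is proved with $p_m$ in place of $p$, H\"older's inequality immediately yields the estimate for $p$.

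The technical difficulty is that a naive iteration of Lemma~\ref{lem1} shrinks the spatial radius by a factor of $1/2$ at each step, ending on $Q_{r/2^m}$ rather than on $Q_{r/2}$. To remedy this I would revisit the proof of Lemma~\ref{lem1} and note that it extends with only cosmetic changes to any fixed ratio $\theta \in (0,1)$ between inner and outer radii: one simply picks the cutoff $\eta$ to equal $1$ on $(t_1-(\theta r)^{2/\alpha},t_1)$, with analogous adjustments in the spatial cutoffs, at the cost of constants that blow up as $\theta \to 1$. With this flexible version in hand, I would choose radii $r = R_0 > R_1 > \dots > R_m = r/2$ with $R_i := r(1 - i/(2m))$, so that all $R_i \in [r/2,r]$ and all ratios $R_i/R_{i-1}$ lie in $(1/2,1)$; the constants at each step then depend only on $m$ (hence on $p_0, p, d, \alpha$). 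Applying the generalized Lemma~\ref{lem1} successively moves one from $(p_{i-1}, Q_{R_{i-1}})$ to $(p_i, Q_{R_i})$, with the ``moreover'' clause of Lemma~\ref{lem1} securing $v \in \bH^{\alpha,2}_{p_i,0}$ on the smaller cylinder at the next step.

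After $m$ such steps one arrives at a multi-index estimate of the form
\[
\left(|D^2v|^{p_m}\right)_{Q_{r/2}(t_1,0)}^{1/p_m} \leq N \sum_{j_1,\dots,j_m \geq 1}\prod_{i=1}^m j_i^{-(1+\alpha)}\left(|D^2v|^{p_0}\right)_{Q_r(t_1-S,0)}^{1/p_0},
\]
where $S = \sum_{i=1}^m (j_i-1)R_{i-1}^{2/\alpha}$. To collapse this into the single-index form appearing in \eqref{eq2.07}, I would assign to each multi-index the integer $l$ determined by $(l-1)r^{2/\alpha} \leq S < lr^{2/\alpha}$; the cylinder $Q_r(t_1-S,0)$ is then covered by at most two cylinders of the required form $Q_r(t_1-(l'-1)r^{2/\alpha},0)$. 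The combinatorial heart of the proof is the bound
\[
\sum_{(j_1,\dots,j_m)\mapsto l}\prod_{i=1}^m j_i^{-(1+\alpha)} \leq N l^{-(1+\alpha)},
\]
which I expect to prove by iterating Lemma~\ref{lem2}; the varying coefficients $R_{i-1}^{2/\alpha} \in [2^{-2/\alpha}r^{2/\alpha}, r^{2/\alpha}]$ enter only as multiplicative constants and do not affect the power-law tail. Verifying the robustness of Lemma~\ref{lem2}'s convolution under these differing weights and the $m$-fold iteration is the main obstacle of the argument.

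Finally, to prove \eqref{eq2.11}, I would iterate as above until $p_{m-1} > d/2 + 1/\alpha$ and invoke the H\"older-seminorm statement of Lemma~\ref{lem1} (in its flexible-ratio form) at the last step in place of the $L^{p_m}$ estimate. Controlling the resulting $L^{p_{m-1}}$-averages on the shifted cylinders $Q_{R_{m-1}}(\cdot,0)$ by $L^{p_0}$-averages on the $Q_r$ cylinders via \eqref{eq2.07} (already established) and applying the collapsing argument one more time delivers the desired H\"older bound on $D^2 v$ in $Q_{r/2}(t_1,0)$.
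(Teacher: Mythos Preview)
Your proposal is correct and follows essentially the same route as the paper: iterate Lemma~\ref{lem1} finitely many times to boost integrability, and use Lemma~\ref{lem2} to collapse the resulting multi-index sums back to the single-index form $\sum_j j^{-(1+\alpha)}$. The paper even carries out the binning step you describe (grouping the non-integer shifts $(j-1)2^{-2/\alpha}$ into integer buckets before applying Lemma~\ref{lem2}), though it does so after each pair of iterations rather than all at once at the end.

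The one genuine difference is how the two proofs handle the shrinking spatial radius. The paper applies Lemma~\ref{lem1} with the fixed ratio $1/2$ at every step, lands on $Q_{r/2^m}$, and then invokes a covering argument to pass back to $Q_{r/2}$; this costs nothing extra because the right-hand side is already expressed in terms of the full-size cylinders $Q_r(\cdot,0)$, and translates of $Q_{r/2^m}$ within $B_{r/2}$ still sit inside $B_r$ where the equation holds. You instead propose to generalize Lemma~\ref{lem1} to an arbitrary ratio $\theta\in(0,1)$ and telescope the radii $R_0>\dots>R_m=r/2$, which avoids the covering step at the price of revisiting the proof of Lemma~\ref{lem1}. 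Both are legitimate; the paper's covering argument is shorter to state, while your version makes the dependence of the constants on $m$ (hence on $p,p_0$) more transparent.
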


\begin{proof}
Due to scaling, we consider only the case $r=1$.
By Lemma \ref{lem1}, one can find $p_1$ satisfying \eqref{eq0411_05} such that
\begin{equation}
							\label{eq2.41}
\left( |D^2v|^{p_1} \right)_{Q_{1/2}(t,0)}^{1/p_1} \leq N
\sum_{j=1}^\infty j^{-(1+\alpha)} \left( |D^2v|^{p_0} \right)_{Q_{1}(t-j+1,0)}^{1/p_0}
\end{equation}
for any $t\le t_0$.
If $p_1 \ge p$, we reach \eqref{eq2.07}.
Otherwise, by the same lemma, $v \in \bH_{p_1,0}^{\alpha,2}\left((0,t_0) \times B_{1/2}\right)$ and there exists $p_2$ satisfying
$$
p_2 > p_1 + \min\left\{\frac{2\alpha}{\alpha d + 2 - 2\alpha}, \alpha, \frac{2}{d} \right\}
$$
and
\begin{equation}
							\label{eq2.23}
\left( |D^2v|^{p_2} \right)_{Q_{1/4}(t_1,0)}^{1/p_2} \leq N
\sum_{j=1}^\infty j^{-(1+\alpha)} \left( |D^2v|^{p_1} \right)_{Q_{1/2}(t_1-(j-1)2^{-2/\alpha},0)}^{1/p_1}.
\end{equation}
Combining \eqref{eq2.41} and \eqref{eq2.23}, we get
\begin{align*}
\left( |D^2v|^{p_2} \right)_{Q_{1/4}(t_1,0)}^{1/p_2}
&\leq N \sum_{j=1}^\infty j^{-(1+\alpha)} \left( |D^2v|^{p_1} \right)_{Q_{1/2}(t_1-(j-1)2^{-2/\alpha},0)}^{1/p_1}\\
&\leq N \sum_{j=1}^\infty j^{-(1+\alpha)} \sum_{k=1}^\infty k^{-(1+\alpha)}\left( |D^2v|^{p_0} \right)_{Q_{1}(t_1-(j-1)2^{-2/\alpha}-k+1,0)}^{1/p_0}.
\end{align*}
Note that
\begin{align*}
&\sum_{j=1}^\infty j^{-(1+\alpha)} \sum_{k=1}^\infty k^{-(1+\alpha)}\left( |D^2v|^{p_0} \right)_{Q_1(t_1-(j-1)2^{-2/\alpha}-k+1,0)}^{1/p_0}\\
&= \sum_{m=1}^\infty \sum_{\substack{j \in \bN, \, j \geq 1 \\ m -1 \leq (j-1)2^{-2/\alpha} < m}} j^{-(1+\alpha)} \sum_{k=1}^\infty k^{-(1+\alpha)}\left( |D^2v|^{p_0} \right)_{Q_{1}(t_1-(j-1)2^{-2/\alpha}-k+1,0)}^{1/p_0}\\
&\leq \sum_{m=1}^\infty \sum_{\substack{j \in \bN, \, j \geq 1 \\ m -1 \leq (j-1)2^{-2/\alpha} < m}} m^{-(1+\alpha)} \sum_{k=1}^\infty k^{-(1+\alpha)}\left( |D^2v|^{p_0} \right)_{Q_{1}(t_1-(j-1)2^{-2/\alpha}-k+1,0)}^{1/p_0},
\end{align*}
where we used the inequalities
\begin{equation}
							\label{eq0113_01}
m-1 \leq (j-1) 2^{-2/\alpha} < m
\end{equation}
so that
$$
j^{-(1+\alpha)} \leq \left( (m-1) 2^{2/\alpha} + 1 \right)^{-(1+\alpha)} \leq m^{-(1+\alpha)}.
$$
Using \eqref{eq0113_01} again, we see that
\begin{align*}
&\left( |D^2v|^{p_0} \right)_{Q_{1}(t_1-(j-1)2^{-2/\alpha}-k+1,0)}^{1/p_0}\\
&\leq \left( |D^2v|^{p_0} \right)_{Q_{1}(t_1-m-k+1,0)}^{1/p_0} + \left( |D^2v|^{p_0} \right)_{Q_1(t_1-m-k+2,0)}^{1/p_0}.
\end{align*}
We then use the fact that for each $m$, the number of $j \in \bN$ such that
$$
m-1 \leq (j-1) 2^{-2/\alpha} < m
$$
is at most a fixed number determined by $\alpha$ to obtain that
\begin{align*}
&\left( |D^2v|^{p_2} \right)_{Q_{1/4}(t_1,0)}^{1/p_2} \leq N \sum_{m=1}^\infty m^{-(1+\alpha)} \sum_{k=1}^\infty k^{-(1+\alpha)}\left( |D^2v|^{p_0} \right)_{Q_{1}(t_1-m-k+1,0)}^{1/p_0}\\
&\qquad + N \sum_{m=1}^\infty m^{-(1+\alpha)} \sum_{k=1}^\infty k^{-(1+\alpha)} \left( |D^2v|^{p_0} \right)_{Q_1(t_1-m-k+2,0)}^{1/p_0},
\end{align*}
where for the first term on the right-hand side (in the same way for the second term), using Lemma \ref{lem2} we have
\begin{align*}
&\sum_{m=1}^\infty m^{-(1+\alpha)}\sum_{k=1}^\infty k^{-(1+\alpha)}\left( |D^2v|^{p_0} \right)_{Q_1(t_1-m-k+1,0)}^{1/p_0}\\
&= \sum_{m=1}^\infty m^{-(1+\alpha)} \sum_{k=m+1}^\infty (k-m)^{-(1+\alpha)} \left( |D^2v|^{p_0} \right)_{Q_1(t_1-k+1,0)}^{1/p_0}\\
&= \sum_{k=2}^\infty \left( |D^2v|^{p_0} \right)_{Q_1(t_1-k+1,0)}^{1/p_0} \sum_{m=1}^{k-1}m^{-(1+\alpha)} (k-m)^{-(1+\alpha)}\\
&\leq N \sum_{k=2}^\infty k^{-(1+\alpha)} \left( |D^2v|^{p_0} \right)_{Q_1(t_1-k+1,0)}^{1/p_0}.
\end{align*}
Combining the above inequalities, we arrive at
$$
\left( |D^2v|^{p_2} \right)_{Q_{1/4}(t_1,0)}^{1/p_2} \leq N \sum_{k=1}^\infty k^{-(1+\alpha)} \left( |D^2v|^{p_0} \right)_{Q_1(t_1-k+1,0)}^{1/p_0}.
$$
Repeating this procedure finite times and using a covering argument, we obtain \eqref{eq2.07}.

The inequality \eqref{eq2.11} follows directly from Lemma \ref{lem1} if $p_0 > d/2+1/\alpha$.
If $p_0 \leq d/2 + 1/\alpha$, we first have \eqref{eq2.07} with $r=1/2$ for a sufficiently large $p$ so that $p \in (d/2 + 1/\alpha,\infty)$ and $v \in \bH_{p,0}^{\alpha,2}\left((0,t_0) \times B_{1/4}\right)$.
Then by using \eqref{eq0108_02} with $r=1/4$ and the covering argument as above we arrive at \eqref{eq2.11} with $r=1$. The proposition is proved.
\end{proof}

\begin{remark}
The right-hand sides of \eqref{eq2.07} and \eqref{eq2.11} can be bounded by
$$
N(\cS\cM |D^2 v|^{p_0})^{1/p_0}(t_0,0),
$$
provided that $\cS\cM (|D^2 v|^{p_0})(t_0,0)$ is well defined.
Indeed, thanks to scaling it is enough to check this when $r=1$. By using H\"older's inequality (or the $l_1$-average is less than or equal to the $l_{p_0}$-average),
\begin{align}
                    \label{eq11.17}
&\sum_{j=1}^\infty j^{-(1+\alpha)} \left( |D^2v|^{p_0} \right)_{Q_{1}(t_0-j+1,0)}^{1/p_0}\nonumber\\
&\le  \sum_{k=0}^\infty
\sum_{j=2^k}^{2^{k+1}-1} 2^{-k(1+\alpha)} \left( |D^2v|^{p_0} \right)_{Q_{1}(t_0-j+1,0)}^{1/p_0}\nonumber\\
&\le  \sum_{k=0}^\infty
 2^{-k\alpha} \bigg[2^{-k}\sum_{j=2^k}^{2^{k+1}-1}\left( |D^2v|^{p_0} \right)_{Q_{1}(t_0-j+1,0)}\bigg]^{1/p_0}\nonumber\\
&=  \sum_{k=0}^\infty
 2^{-k\alpha} \left( |D^2v|^{p_0} \right)_{(t_0-2^{k+1}+1,t_0-2^{k}+1)\times B_1}^{1/p_0}\nonumber\\
& \le \sum_{k=0}^\infty  2^{-k\alpha}(\cS\cM |D^2 v|^{p_0})^{1/p_0}(t_0,0).
\end{align}
\end{remark}

\subsection{Estimates of \texorpdfstring{$w$}{w}}

Below we denote $\cH^{\alpha,1}_{2,0}\left((0,T) \times \bR^d\right)$ to be the set of functions which can be approximated by a sequence $\{u_n\} \subset C^\infty_0\left([0,T] \times \bR^d\right)$ with $u_n(0,\cdot) = 0$ in the norm
$$
\|u\|_{\cH^{\alpha,1}_2\left((0,T) \times \bR^d\right)} = \|u\|_{L_p\left((0,T) \times \bR^d\right)} + \|Du\|_{L_p\left((0,T) \times \bR^d\right)} + \|\partial_t^\alpha u\|_{\bH_p^{-1}\left((0,T) \times \bR^d\right)}.
$$
For details about $\cH^{\alpha,1}_{2,0}\left((0,T) \times \bR^d\right)$, see \cite{MR4030286}.

\begin{lemma}
                            \label{lem3}
Let $p_0\in (1,\infty)$, $t_0 \in (0,\infty)$, $f \in L_{p_0}\left((0,t_0) \times B_1\right)$, and $w\in \cH^{\alpha,1}_{2,0}((0,t_0)\times B_1)$ be a weak solution to \eqref{eq1.01} in $(0, t_0) \times B_1$ with the zero boundary condition on $\partial_p\left((0,t_0) \times B_1\right)$. Then we have
\begin{equation}
                                    \label{eq12.07}
\|w\|_{L_{p_0}((0,t_0)\times B_1)}\le N\|f\|_{L_{p_0}((0,t_0)\times B_1)},
\end{equation}
where $N=N(d,\delta,p_0)$ is independent of $t_0$.
\end{lemma}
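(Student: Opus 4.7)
The plan is to mirror the two-case structure of the proof of Lemma~\ref{lem0131_2}, but the analysis is actually cleaner in the fractional setting because the time-fractional term contributes nonnegatively without any exponential-weight device, which is precisely what makes the constant independent of $t_0$.

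\emph{Case 1: $p_0\in[2,\infty)$.} I would test \eqref{eq1.01} against $-p_0|w|^{p_0-2}w$ and integrate over $(0,t_0)\times B_1$. Integration by parts in the spatial variables (using the zero lateral boundary condition) together with the ellipticity bound produces $\delta p_0(p_0-1)\int_0^{t_0}\!\int_{B_1} |w|^{p_0-2}|Dw|^2$. For the time-derivative term, the Alikhanov/Kornienko convexity inequality
\[
p_0|w|^{p_0-2}w\cdot\partial_t^\alpha w \ \ge\ \partial_t^\alpha|w|^{p_0},
\]
combined with $w(0,\cdot)=0$, gives $\int_0^{t_0}\partial_t^\alpha|w|^{p_0}\,dt = I_0^{1-\alpha}|w|^{p_0}(t_0)\ge 0$, which may simply be discarded; crucially, no $t_0$-dependent constant is ever introduced. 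The spatial Poincar\'e inequality applied to $|w|^{p_0/2}\in H_0^1(B_1)$ (here is where the boundedness of $B_1$ enters), followed by H\"older's inequality on $p_0\int_0^{t_0}\!\int_{B_1}|f||w|^{p_0-1}$, then yields \eqref{eq12.07} with constant depending only on $d,\delta,p_0$. The test function $|w|^{p_0-2}w$ is legitimized for a weak solution in $\cH^{\alpha,1}_{2,0}$ by a standard truncation/mollification argument, since the convexity inequality passes to the limit.

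\emph{Case 2: $p_0\in(1,2)$.} I would argue by duality with $q_0=p_0/(p_0-1)\in(2,\infty)$. For $g\in C_0^\infty((0,t_0)\times B_1)$, introduce the time-reversed adjoint problem
\[
-\partial_s^\alpha U_1 + a^{ij}(t_0-s)D_{ij}U_1 \ =\ g(t_0-s,x)
\]
on $(0,t_0)\times B_1$ with $U_1(0,\cdot)=0$ and zero lateral boundary condition. Case~1, applied to this reversed equation, furnishes $\|U_1\|_{L_{q_0}}\le N\|g\|_{L_{q_0}}$ with $N$ independent of $t_0$, while its solvability in the corresponding weak class follows from the $L_p$-theory for time-fractional equations developed in~\cite{MR3899965, MR4030286} together with the method of continuity. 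A Fubini calculation on the triangle $0<s<t<t_0$, using $w(0,\cdot)=U_1(0,\cdot)=0$, produces the duality identity
\[
\int_0^{t_0}\!\int_{B_1} w(t,x)\,g(t,x)\,dx\,dt \ =\ \int_0^{t_0}\!\int_{B_1} f(t,x)\,U_1(t_0-t,x)\,dx\,dt,
\]
after which taking the supremum over $g$ with $\|g\|_{L_{q_0}}=1$ completes the proof.

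The most delicate step will be rigorously justifying the duality identity in Case~2: the formal adjoint of the left Caputo derivative on $(0,t_0)$ is a right-sided Riemann--Liouville-type operator, and converting it into a left Caputo derivative acting on the reversed function $U_1(t_0-\cdot,x)$ requires careful bookkeeping of the fractional-integral endpoint behavior at $t=t_0$. Once this computation is in place, the remainder of the argument is essentially a transcription of the scheme in Lemma~\ref{lem0131_2}, with the exponential-weight trick replaced by the intrinsic positivity $I_0^{1-\alpha}|w|^{p_0}(t_0)\ge 0$.
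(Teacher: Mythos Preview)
Your proposal is correct and follows essentially the same approach as the paper: Case~1 via the convexity inequality for $\partial_t^\alpha$ plus Poincar\'e, and Case~2 via duality with the time-reversed equation. The paper makes your ``standard truncation'' explicit by testing against $-F'(w)$ for the $C^1$ convex function $F(u)=|u|^{p_0}$ when $|u|\le\lambda$ and $F(u)=(p_0/2)\lambda^{p_0-2}|u|^2$ when $|u|>\lambda$, then sending $\lambda\to\infty$ (this is needed since $w$ is a priori only in the $L_2$-based space $\cH^{\alpha,1}_{2,0}$, so neither $|w|^{p_0-2}w\in L_2$ nor $\|w\|_{L_{p_0}}<\infty$ is known in advance), and for the adjoint solvability in Case~2 it cites Zacher~\cite{MR2538276} rather than \cite{MR3899965, MR4030286}.
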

\begin{proof}
We consider two cases.

{\em Case 1: $p_0\in [2,\infty)$.} For $\lambda\ge 1$, let $F(u)=|u|^{p_0}$ when $|u|\le \lambda$ and $F(u)=(p_0/2)\lambda^{p_0-2}|u|^2$ when $|u|>\lambda$. Note that $F$ is a $C^1$ convex function.
We multiply both sides of \eqref{eq1.01} by $-F'(w)$
and integrate in $(0,t_0)\times B_1$. By noting that because $F$ is convex the integral involving $\partial_t^\alpha w$ is nonnegative (see, for example, the proof of \cite[Lemma 4.1]{MR4030286}), we have
\begin{align*}
&\int_{(0,t_0)\times B_1} a_{ij}D_j wD_i w\big(p_0(p_0-1)|w|^{p_0-2}\chi_{|w|\le \lambda}+p_0\lambda^{p_0-2}\chi_{|w|>\lambda}\big) \, dx \, dt\\
&\le \int_{(0,t_0)\times B_1} |f|\big(p_0|w|^{p_0-1}\chi_{|w|\le \lambda}+p_0\lambda^{p_0-2}|w|\chi_{|w|>\lambda}\big)\, dx \, dt.
\end{align*}
By using the zero boundary condition and the Poincar\'e inequality to $G(w)$, where $G(u) = |u|^{p_0/2}$ when $|u| \leq \lambda$ and $G(u) = \lambda^{p_0/2 -1}|u|$ when $|u| > \lambda$, we then get
\begin{align*}
&\int_{(0,t_0) \times B_1} \left( |w|^{p_0} \chi_{|w| \leq \lambda} + \lambda^{p_0 - 2} |w|^2\chi_{|w| > \lambda} \right) \, dx \, dt
\\
&\quad= \int_{(0,t_0) \times B_1} |G(w)|^2 \, dx \, dt \leq N(d) \int_{(0,t_0) \times B_1} |D \left(G(w)\right)|^2 \, dx \,dt
\\
&\quad\leq N(d) \int_{(0,t_0) \times B_1} |Dw|^2 \left(\frac{p_0^2}{4} |w|^{p_0-2}  \chi_{|w| \leq \lambda} + \lambda^{p_0-2}\chi_{|w| > \lambda} \right) \, dx \, dt
\\
&\quad\leq N(d) \int_{(0,t_0) \times B_1} |Dw|^2 \left(p_0(p_0-1)|w|^{p_0-2}  \chi_{|w| \leq \lambda} + p_0 \lambda^{p_0-2}\chi_{|w| > \lambda} \right) \, dx \, dt
\\
&\quad\leq N(d,\delta)\int_{(0,t_0) \times B_1} |f|\left(p_0|w|^{p_0-1}\chi_{|w|\le \lambda}+p_0\lambda^{p_0-2}|w|\chi_{|w|>\lambda}\right) \, dx \,dt.
\end{align*}
From this and Young's inequality, we obtain that
\begin{align*}
&\int_{(0,t_0)\times B_1}|w\chi_{|w|\le \lambda}|^{p_0}+\lambda^{p_0-2}|w\chi_{|w| > \lambda}|^2\\
&\le N\int_{(0,t_0)\times B_1}|f|^{p_0} \chi_{|w|\le \lambda}+\lambda^{p_0-2}|f|^2\chi_{|w|> \lambda},
\end{align*}
where $N = N(d,\delta,p_0)$. This inequality with the fact that $f \in L_{p_0}\left((0,t_0) \times B_1\right)$ shows that
$$
\int_{(0,t_0) \times B_1} |w|^{p_0} \chi_{|w| \leq \lambda} \, dx \, dt \leq N \int_{(0,t_0) \times B_1} |f|^{p_0} \, dx \, dt.
$$
Taking $\lambda\to \infty$ and applying the monotone convergence theorem, we see that $w\in L_{p_0}((0,t_0)\times B_1)$ and \eqref{eq12.07} holds.

{\em Case 2: $p_0\in (1,2)$.} We use a duality argument. For any $g\in C^\infty((0,t_0)\times B_1)$, let $w_1$ be the unique $\cH^{\alpha,1}_{2,0}((0,t_0)\times B_1)$ of the equation
$$
-\partial^\alpha_t w_1+a^{ij}(t_0-t)D_{ij}w_1=g(t_0-t,x)
$$
in $((0,t_0)\times B_1)$ with the zero boundary condition on $\partial_p((0,t_0)\times B_1)$.
See \cite{MR2538276} for the existence and uniqueness of solutions.
From Case 1, we have
\begin{equation}
                            \label{eq12.38}
\|w_1\|_{L_{q_0}((0,t_0)\times B_1)}\le N\|g\|_{L_{q_0}((0,t_0)\times B_1)},
\end{equation}
where $q_0=p_0/(p_0-1)\in (2,\infty)$.
Denote $w_2(t,x)=w_1(t_0-t,x)$. Then by using integration by parts, we have
$$
\int_{(0,t_0)\times B_1}wg \, dx \, dt =\int_{(0,t_0)\times B_1}w_2 f \, dx \, dt.
$$
This together with \eqref{eq12.38} immediately gives \eqref{eq12.07}.

The lemma is proved.
\end{proof}

\begin{proposition}
                    \label{prop2}
Let $p_0\in (1,\infty)$, $f \in L_{p_0}\left( (0,t_0) \times B_1 \right)$, and $w\in \cH^{\alpha,1}_{2,0}((0,t_0)\times B_1)$ be a weak solution to \eqref{eq1.01} in $(0, t_0) \times B_1$ with the zero boundary condition on $\partial_p\left((0,t_0) \times B_1\right)$. Then we have
\begin{equation}
                                \label{eq3.02}
\left( |D^2 w|^{p_0} \right)_{Q_{1/2}(t_1,0)}^{1/p_0} \le \sum_{k=0}^\infty c_k(|f|^{p_0})^{1/p_0}_{(s_{k+1},s_k)\times B_{1}}
\end{equation}
for any $t_1 \leq t_0$, where $s_k = t_1 - 2^k + 1$ and
\begin{equation}
							\label{eq0212_03}
\sum_{k=0}^\infty c_k\le N=N(d,\delta,\alpha,p_0).
\end{equation}
\end{proposition}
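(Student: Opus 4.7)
The plan is to follow the strategy of Lemma \ref{lem0203_1} in the local-time setting, combining a local $L_{p_0}$-estimate on $D^2 w$ with a dyadic-in-time bound on $\|w\|_{L_{p_0}}$. In Section \ref{sec3} the dyadic-in-time bound came from Lemma \ref{lem0131_2}, whose proof relied on the $e^{\varepsilon t}$-weight trick. That trick is not available for $\partial_t^\alpha$, so the dyadic structure has to be produced differently: I plan to decompose the source $f$ dyadically in time and exploit the causal structure of the non-local equation, using Lemma \ref{lem3} as the only global ingredient.

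Concretely, set $f_k := f\chi_{(s_{k+1}, s_k) \cap (0,t_0)}$ and let $w_k \in \cH^{\alpha,1}_{2,0}((0,t_0) \times B_1)$ solve \eqref{eq1.01} with source $f_k$ (zero initial, zero Dirichlet data). By linearity $w = \sum_k w_k$, and by uniqueness each $w_k$ vanishes on $\{t \le s_{k+1} \vee 0\}$, so Lemma \ref{lem3} applied on the shifted interval $(s_{k+1}\vee 0, t_0) \times B_1$ produces
\[
\|w_k\|_{L_{p_0}((s_{k+1}\vee 0, t_0) \times B_1)} \le N \|f\|_{L_{p_0}((s_{k+1}, s_k) \times B_1)}.
\]
I next introduce a smooth cutoff $\eta(t)$ equal to $1$ on $(t_1 - 2^{-2/\alpha}, t_1)$ with support in $(t_1 - 1, t_1)$, and mimic the calculation in the proof of Lemma \ref{lem1} to derive
\[
-\partial_t^\alpha(\eta w_k) + a^{ij} D_{ij}(\eta w_k) = \eta f_k + \cG_k,
\]
where $\cG_k(t,x) = \tfrac{\alpha}{\Gamma(1-\alpha)}\int_{-\infty}^t (t-s)^{-\alpha-1}(\eta(t) - \eta(s)) w_k(s,x)\,ds$. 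The local $L_{p_0}$-estimate from \cite[Lemma 4.2]{MR3899965} then bounds $\|D^2(\eta w_k)\|_{L_{p_0}(Q_{1/2}(t_1,0))}$ by $N\|\eta f_k + \cG_k\|_{L_{p_0}} + N\|\eta w_k\|_{L_{p_0}}$; the term $\eta f_k$ is nonzero only when $k=0$, producing the $k=0$ contribution in \eqref{eq3.02}.

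For $k \ge 1$ the remaining pieces are controlled by splitting the $s$-integral defining $\cG_k$ dyadically into past intervals $(s_{j+1}, s_j)$ with $0 \le j \le k$ (terms with $j > k$ vanish since $w_k \equiv 0$ there). On such an interval the weight $(t-s)^{-\alpha-1}$ is of order $2^{-j(\alpha+1)}$, and H\"older's inequality in $s$ yields
\[
\|\cG_k\|_{L_{p_0}} \le N \sum_{j=0}^{k} 2^{-j(\alpha + 1/p_0)} \|w_k\|_{L_{p_0}((s_{j+1}, s_j) \times B_1)}.
\]
Combining this with the bound on $w_k$ above, summing over $k$, and converting from $L_{p_0}$-norms to averages will yield \eqref{eq3.02}. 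The main technical obstacle is verifying the summability \eqref{eq0212_03}: the naive bound $\|w_k\|_{L_{p_0}} \le N\|f_k\|_{L_{p_0}}$, after the $2^{k/p_0}$ factor introduced by converting the $L_{p_0}$-norm of $f_k$ to the average $(|f|^{p_0})^{1/p_0}_{(s_{k+1}, s_k) \times B_1}$, leaves a coefficient of size $2^{k/p_0}$ in the $k$-th term, which is not summable. The summable structure must come from carefully exploiting both the commutator decay $2^{-j(\alpha+1/p_0)}$ and the support property $w_k \equiv 0$ for $s < s_{k+1}$, producing the $2^{-k\alpha}$-type decay needed for $\sum_k c_k < \infty$.
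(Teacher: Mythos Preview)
Your plan has a genuine gap at exactly the point you flag, and the two ingredients you name---the commutator decay and the time-support of $w_k$---cannot close it. After the source $f_k$ switches off at time $s_k$, the solution $w_k$ persists; Lemma~\ref{lem3} gives only the global bound $\|w_k\|_{L_{p_0}((0,t_0)\times B_1)}\le N\|f_k\|_{L_{p_0}}$ and says nothing about how $\|w_k\|_{L_{p_0}((s_{j+1},s_j)\times B_1)}$ decays as $j$ runs from $k$ down to $0$. In your sum for $\cG_k$ the term $j=0$ carries weight of order $1$ and picks up $\|w_k\|_{L_{p_0}((s_1,s_0)\times B_1)}$, for which you have only the global bound $\sim 2^{k/p_0}(|f|^{p_0})^{1/p_0}_{(s_{k+1},s_k)\times B_1}$. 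The support property $w_k\equiv 0$ for $s<s_{k+1}$ merely truncates the $j$-sum at $j=k$; it does not make the small-$j$ terms small. To close you would need a decay estimate of the type $(|w_k|^{p_0})^{1/p_0}_{(s_{j+1},s_j)\times B_1}\lesssim 2^{-(k-j)\alpha}(|f|^{p_0})^{1/p_0}_{(s_{k+1},s_k)\times B_1}$, i.e.\ polynomial-in-time decay of the solution after the source is removed. That is true, but it is not a consequence of Lemma~\ref{lem3}; proving it is essentially the content of the proposition.

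The paper does not split $f$. It first applies the single cutoff near $t_1$ (as you do) to reduce matters to bounding $A_k:=(|w|^{p_0})^{1/p_0}_{(s_{k+1},s_k)\times B_1}$, obtaining
\[
\left(|D^2 w|^{p_0}\right)_{Q_{1/2}(t_1,0)}^{1/p_0}\le N(|f|^{p_0})^{1/p_0}_{Q_1(t_1,0)}+N\sum_{k\ge 0}2^{-k\alpha}A_k.
\]
The key new idea is then to introduce a \emph{separate} cutoff $\eta_k$ for each $k$, with $\eta_k=1$ for $t\ge s_{k+1}$ and $\eta_k=0$ for $t\le s_{k+2}$, and to apply Lemma~\ref{lem3} to $w\eta_k$ on $(s_{k+2},t_0)\times B_1$. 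This produces the recursion
\[
A_k\le N(|f|^{p_0})^{1/p_0}_{(s_{k+2},s_k)\times B_1}+N_0\sum_{j\ge k+1}2^{-\alpha j}A_j,
\]
which is solved by summing over $k\ge k_0$ (with $k_0$ large enough that the tail can be absorbed) and then treating $k<k_0$ by downward induction. It is this family of cutoffs at all dyadic past scales---rather than a single cutoff near $t_1$ combined with a splitting of $f$---that encodes the needed time decay.
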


\begin{proof}
Since the assumptions are satisfied if $t_0$ is replaced with $t_1$, we prove only the case $t_1 = t_0$.
By using the solvability in \cite{MR3899965} of equations in non-divergence form in $(0,t_0) \times \bR^d$, one can check that $w \in \bH_{p_0,0}^{\alpha,2}\left((0,t_0) \times B_{1-\varepsilon}\right)$ for any $\varepsilon \in (0,1)$.
By using a cutoff function as in the proof of Lemma \ref{lem1} (cf. \eqref{eq0715_01}), we have
\begin{equation}
                                \label{eq3.16}
\left( |D^2 w|^{p_0} \right)_{Q_{1/2}(t_0,0)}^{1/p_0}
\le N\left( |f|^{p_0} \right)_{Q_{1}(t_0,0)}^{1/p_0}
+N \sum_{k=0}^\infty 2^{-k\alpha} A_k,
\end{equation}
where
$$
A_k =(|w|^{p_0})^{1/p_0}_{(s_{k+1},s_k)\times B_{1}}, \quad s_k = t_0 - 2^k + 1.
$$
It remains to estimate $A_k$ for $k=0,1,\ldots$. To this end, we take cutoff functions $\eta_k\in C^\infty(\bR)$ such that $\eta_k=1$ for $t\ge s_{k+1}$, $\eta_k=0$ for $t\leq s_{k+2}$, and $\|\eta_k'\|_{L_\infty}\le 2^{-k}$. Then $w\eta_k$ satisfies
\begin{equation}
                            \label{eq3.54}
-\partial_t^\alpha (w\eta_k)+ a^{ij} D_{ij} (w\eta_k) = g_k
\end{equation}
in $(s_{k+2},t_0) \times B_1$
with the zero boundary condition on $\partial_p \left((s_{k+2},t_0) \times B_1\right)$, where $\partial_t^\alpha = \partial_t I_{s_{k+2}}^{1-\alpha}$ and
\begin{equation}
                        \label{eq4.29}
g_k=f\eta_k-\frac{\alpha}{\Gamma(1-\alpha)} \int_{\infty}^t (t-s)^{-\alpha-1}\left(\eta_k(t) - \eta_k(s)\right) w(s,x) \, ds
\end{equation}
in $(s_{k+2}, t_0) \times B_1$.
By Lemma \ref{lem3} applied to \eqref{eq3.54},
$$
\int_{(s_{k+2},s_k)\times B_1}|w\eta_k|^{p_0}\le N\int_{(s_{k+2},s_k)\times B_1}|g_k|^{p_0},
$$
which together with \eqref{eq4.29} further implies that

\begin{equation}
							\label{eq1216_01}
\int_{s_{k+1}}^{s_k} \int_{B_1} |w|^{p_0} \, dx \, dt \leq N \int_{s_{k+2}}^{s_k} \int_{B_1} |f|^{p_0} \, dx \, dt + N \|J\|_{L_{p_0}\left((s_{k+2},s_k) \times B_1\right)}^{p_0},
\end{equation}
where $N = N(d,\delta, \alpha, p_0)$
and
$$
J = \int_{-\infty}^t (t-s)^{-\alpha-1} \left(\eta_k(s) - \eta_k(t) \right) w(s,x) \, ds.
$$
Note that $\eta_k(s) - \eta_k(t) = 0$ for $s \in (s_{k+1},t)$ and $t \in (s_{k+1},s_k)$.
Thus,
\begin{align*}
J &= \int_{-\infty}^t (t-s)^{-\alpha-1}\left(\eta_k(s) - \eta_k(t) \right) w(s,x) \, \chi_{s \leq s_{k+1}} \, ds
\\
&= \int_{s_{k+3}}^t (t-s)^{-\alpha-1} \left(\eta_k(s) - \eta_k(t) \right) w(s,x)  \, \chi_{s \leq s_{k+1}} \, ds
\\
&+ \int_{-\infty}^{s_{k+3}} (t-s)^{-\alpha-1} \left(\eta_k(s) - \eta_k(t) \right) w(s,x) \, ds =: J_1 + J_2,
\end{align*}
where $t \in (s_{k+2},s_k)$ and $t-s \geq s_{k+2} - s_{k+3} = 2^{k+2}$ for $s \leq s_{k+3}$.
Since
$$
|\eta_k(t) - \eta_k(s)| \leq 2^{-k}|t-s|,
$$
it follows that
\begin{align*}
J_1 &\leq 2^{-k} \int_{s_{k+3}}^t (t-s)^{-\alpha}|w(s,x)| \, \chi_{s \leq s_{k+1}} \,d s
\\
&\leq 2^{-k} \int_0^{7 \cdot 2^k} s^{-\alpha} |w(t-s,x)| \, \chi_{s \geq t-s_{k+1}} \, ds.
\end{align*}
Using the Minkowski inequality, we have
\begin{align*}
\|J_1\|_{L_{p_0}\left((s_{k+2},s_k) \times B_1\right)} &\leq 2^{-k} \int_0^{7 \cdot 2^k} s^{-\alpha} \|w(\cdot -s,\cdot) \, \chi_{\cdot - s \leq s_{k+1}}\|_{L_{p_0}\left((s_{k+2},s_k) \times B_1\right)} \, ds
\\
&\leq N(\alpha) 2^{-\alpha k} \|w\|_{L_{p_0}\left((s_{k+4},s_{k+1}) \times B_1\right)}
\\
&\leq N 2^{-\alpha k} \sum_{j=1}^3\|w\|_{L_{p_0}\left((s_{k+j+1},s_{k+j}) \times B_1\right)}.
\end{align*}

From the fact that
$$
|\eta_k(s) - \eta_k(t)| = | \eta_k(t)| \leq 1
$$
for $s < s_{k+3}$, it follows that
$$
|J_2| \leq \int_{-\infty}^{s_{k+3}} (t-s)^{-\alpha-1} |w(s,x)| \, ds = \sum_{j=k+3}^\infty \int_{s_{j+1}}^{s_j} (t-s)^{-\alpha-1}|w(s,x)|\,ds.
$$
Since
$$
(t-s)^{-\alpha-1} \leq 2^{\alpha+1} 2^{-j(\alpha+1)}
$$
for $s \in (s_{j+1},s_j)$ and $t \in (s_{k+2},s_k)$ with $j\ge k+3$, we have
$$
|J_2| \leq 2^{\alpha+1} \sum_{j=k+3}^\infty 2^{-j(\alpha+1)} \int_{s_{j+1}}^{s_j} |w(s,x)| \, ds.
$$
Then by the Minkowski inequality and H\"older's inequality,
\begin{align*}
&\|J_2\|_{L_{p_0} \left((s_{k+2},s_k) \times B_1 \right)} \leq 2^{\alpha+1} \sum_{j=k+3}^\infty 2^{-j(\alpha + 1)} \int_{s_{j+1}}^{s_j} \|w(s,\cdot)\|_{L_{p_0} \left((s_{k+2},s_k) \times B_1 \right)} \, ds\\
&\quad \leq N(d) 2^{\alpha+1} \sum_{j=k+3}^\infty 2^{-\alpha j} (3\cdot 2^k)^{1/p_0} \left(\dashint_{\!s_{j+1}}^{\,\,\,s_j} \dashint_{B_1} |w(s,x)|^{p_0} \, ds \, dx \right)^{1/p_0}.
\end{align*}
Combining the estimates for $J_1$ and $J_2$ with \eqref{eq1216_01}, we arrive at
\begin{equation}
                            \label{eq4.25}
A_k\le N(|f|^{p_0})^{1/p_0}_{(s_{k+2},s_k)\times B_{1}}
+N_0\sum_{j=k+1}^\infty 2^{-\alpha j}A_j,
\end{equation}
where the constants $N$ and $N_0$ depend only on $d$, $\delta$, $\alpha$, and $p_0$.
Now we take a large $k_0$ such that $N_02^{-\alpha k_0}/(1-2^{-\alpha})\le 1/2$. Multiplying the above inequality by $2^{-\alpha k}$ and summing in $k=k_0,k_0+1,\ldots$, we get
\begin{align*}
\sum_{k=k_0}^\infty 2^{-\alpha k}A_k
&\le N\sum_{k=k_0}^\infty 2^{-\alpha k}(|f|^{p_0})^{1/p_0}_{(s_{k+2},s_k)\times B_{1}}+N_0\sum_{k=k_0}^\infty 2^{-\alpha k}\sum_{j=k+1}^\infty 2^{-\alpha j}A_j\\
&=N\sum_{k=k_0}^\infty 2^{-\alpha k}(|f|^{p_0})^{1/p_0}_{(s_{k+2},s_k)\times B_{1}}+N_0\sum_{j=k_0+1}^\infty \bigg(2^{-\alpha j}A_j\sum_{k=k_0}^{j-1} 2^{-\alpha k}\bigg)\\
&\le N\sum_{k=k_0}^\infty 2^{-\alpha k}(|f|^{p_0})^{1/p_0}_{(s_{k+2},s_k)}+\frac 12 \sum_{j=k_0+1}^\infty 2^{-\alpha j}A_j.
\end{align*}
Therefore, we have
$$
\sum_{k=k_0}^\infty 2^{-\alpha k}A_k \le N\sum_{k=k_0}^\infty 2^{-\alpha k}(|f|^{p_0})^{1/p_0}_{(s_{k+2},s_k)}.
$$
Finally, for $k=k_0-1,k_0-2,\ldots, 0$, by using \eqref{eq4.25} and induction, we get
$$
A_k \le N\sum_{j=0}^\infty c_j(|f|^{p_0})^{1/p_0}_{(s_{j+2},s_j)}.
$$
Combining the above inequalities with \eqref{eq3.16}, we reach \eqref{eq3.02} with different $c_k$'s. The proposition is proved.
\end{proof}

\subsection{Estimates of \texorpdfstring{$u$}{u}}

\begin{proposition}
							\label{prop0211_1}
Let $p_0 \in (1,\infty)$, $T \in (0,\infty)$, and $u \in \bH_{p_0,0,\operatorname{loc}}^{\alpha,2} \left((0,T) \times \bR^d\right)$ satisfy \eqref{eq0525_01} in $(0,T) \times \bR^d$.
Then, for any $(t_0,x_0) \in (0,T] \times \bR^d$, $r \in (0,\infty)$, and $\kappa \in (0,1/4)$, we have
\begin{equation*}
\begin{aligned}
&\left(|D^2u - (D^2u)_{Q_{\kappa r}(t_0,x_0)}| \right)_{Q_{\kappa r}(t_0,x_0)}
\leq N \kappa^{\sigma} (\cS\cM |D^2u|^{p_0})^{1/p_0}(t_0,x_0)
\\
& \quad + N \kappa^{-(d+2/\alpha)/p_0} \sum_{k=0}^\infty c_k \left(|f|^{p_0}\right)_{\left(t_0-(2^{k+2}-2)r^{2/\alpha},t_0\right) \times B_r(x_0)}^{1/p_0},
\end{aligned}
\end{equation*}
where $N = N(d,\delta,\alpha,p_0)$, $\sigma=\sigma(d,\alpha,p_0)$, and $\{c_k\}$ satisfies \eqref{eq0212_03}.
\end{proposition}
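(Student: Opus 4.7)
The plan is to follow the template of Proposition \ref{prop0204_1}, using Propositions \ref{prop1} and \ref{prop2} in place of Lemmas \ref{lem0131_3} and \ref{lem0203_1}, with the strong maximal function $\cS\cM$ replacing the ordinary parabolic maximal function to accommodate the non-local effect in time.

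First I would reduce to $x_0 = 0$, $r = 1$ via spatial translation and the parabolic scaling $t \mapsto r^{2/\alpha} t$, $x \mapsto rx$, which preserves both \eqref{eq0525_01} and the cylinders \eqref{eq0210_01}. Next, extending $u$ and $f$ by zero for $t \leq 0$, I decompose $u = v + w$ on $(0, t_0) \times B_1$, where $w \in \cH^{\alpha,1}_{2,0}((0, t_0) \times B_1)$ is the weak solution of \eqref{eq1.01} with zero boundary data on $\partial_p((0, t_0) \times B_1)$ (solvability from \cite{MR2538276, MR4030286}) and $v := u - w$ satisfies \eqref{eq1.03}. A cutoff/bootstrap argument as in the proof of Proposition \ref{prop2} shows $v \in \bH^{\alpha,2}_{p_0,0}((0, t_0) \times B_1)$ so that Proposition \ref{prop1} applies.

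For the oscillation of $D^2 v$, since $\kappa < 1/4$, we have $Q_\kappa(t_0, 0) \subset Q_{1/4}(t_0, 0)$, so \eqref{eq2.11} with $r = 1/2$ yields
\[
\bigl(|D^2 v - (D^2 v)_{Q_\kappa(t_0, 0)}|\bigr)_{Q_\kappa(t_0, 0)} \leq N \kappa^\sigma \sum_{j=1}^\infty j^{-(1+\alpha)} (|D^2 v|^{p_0})_{Q_1(t_0 - (j-1), 0)}^{1/p_0}.
\]
The computation in the remark following Proposition \ref{prop1} (see \eqref{eq11.17}) bounds this series by $N (\cS\cM |D^2 v|^{p_0})^{1/p_0}(t_0, 0)$, which, using $v = u - w$, is controlled by $N(\cS\cM |D^2 u|^{p_0})^{1/p_0}(t_0, 0) + N(\cS\cM |D^2 w|^{p_0})^{1/p_0}(t_0, 0)$. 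For $w$, I use the triangle inequality
\[
\bigl(|D^2 u - (D^2 u)_{Q_\kappa}|\bigr)_{Q_\kappa} \leq \bigl(|D^2 v - (D^2 v)_{Q_\kappa}|\bigr)_{Q_\kappa} + 2(|D^2 w|)_{Q_\kappa(t_0, 0)},
\]
and the measure ratio $|Q_\kappa|/|Q_{1/2}| \sim \kappa^{d+2/\alpha}$ to pass to $(|D^2 w|^{p_0})_{Q_{1/2}(t_0, 0)}^{1/p_0}$ with the loss $\kappa^{-(d+2/\alpha)/p_0}$. Then Proposition \ref{prop2} bounds this quantity by the series $\sum_{k \geq 0} c_k (|f|^{p_0})^{1/p_0}_{(s_{k+1}, s_k) \times B_1}$ with $s_k = t_0 - 2^k + 1$; after undoing the scaling this produces exactly the stated $f$-term over the cylinders $(t_0 - (2^{k+2} - 2) r^{2/\alpha}, t_0) \times B_r(x_0)$. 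The similar strong-maximal contribution of $w$ absorbed into the $v$-estimate can be handled the same way by dominating the averages of $|D^2 w|^{p_0}$ on $Q_1(t_0 - (j-1), 0)$ by the analogous sums of $f$-averages.

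The main obstacle I anticipate is ensuring the decomposition $u = v + w$ actually lives in the right function spaces: $w$ must be a genuine $\cH^{\alpha,1}_{2,0}$-solution on $(0, t_0) \times B_1$ so that Proposition \ref{prop2} applies, while simultaneously $v = u - w$ must belong to $\bH^{\alpha,2}_{p_0,0}$ on that cylinder so that Proposition \ref{prop1} applies. This requires combining the interior $\bH_{p_0,0}^{\alpha,2}$-regularity of $u$ from \cite{MR3899965} with the boundary solvability from \cite{MR4030286}, together with the method of continuity. Once this is in place, the remaining work is bookkeeping with \eqref{eq11.17} and the fact that $\sum c_k$ in \eqref{eq3.02} is summable, both of which are essentially present in the preceding propositions.
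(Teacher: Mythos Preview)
Your overall plan matches the paper's: decompose $u=v+w$ on $(0,t_0)\times B_1$, apply Proposition~\ref{prop1} (with $r=1/2$) to $v$ and Proposition~\ref{prop2} to $w$, and combine. Two points deserve correction.

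First, the order of operations in handling the $v$-series is wrong. You pass from the $j$-series $\sum_j j^{-(1+\alpha)}(|D^2v|^{p_0})_{Q_{1/2}(\cdots)}^{1/p_0}$ to $(\cS\cM|D^2v|^{p_0})^{1/p_0}(t_0,0)$ via \eqref{eq11.17}, and only then split $v=u-w$. But $(\cS\cM|D^2w|^{p_0})^{1/p_0}(t_0,0)$ is a supremum over \emph{all} rectangles through $(t_0,0)$, whereas Proposition~\ref{prop2} only controls averages on the specific cylinders $Q_{1/2}(t_1,0)$; there is no way to recover the individual averages once the supremum has been taken. The paper instead splits \emph{at the level of the $j$-series}, writing (cf.\ \eqref{eq0205_02})
\[
\sum_{j\ge 1} j^{-(1+\alpha)}(|D^2v|^{p_0})_{Q_{1/2}(\cdots)}^{1/p_0}
\le \sum_{j\ge 1} j^{-(1+\alpha)}(|D^2u|^{p_0})_{Q_{1/2}(\cdots)}^{1/p_0}
+ \sum_{j\ge 1} j^{-(1+\alpha)}(|D^2w|^{p_0})_{Q_{1/2}(\cdots)}^{1/p_0},
\]
and applies \eqref{eq11.17} only to the $u$-sum.

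Second, the $w$-sum is not mere bookkeeping. Applying Proposition~\ref{prop2} with $t_1=t_0-(j-1)2^{-2/\alpha}$ for each $j$ gives a \emph{double} sum $\sum_j j^{-(1+\alpha)}\sum_k c_k(|f|^{p_0})_{(s_{k+1}^j,s_k^j)\times B_1}^{1/p_0}$ with shifted endpoints $s_k^j$ depending on $j$. Collapsing this into a single summable series of the form $\sum_k c_k'(|f|^{p_0})_{(t_0-2^{k+2}+2,t_0)\times B_1}^{1/p_0}$ requires the dyadic regrouping argument carried out in the paper (group $j$ according to $m-1\le (j-1)2^{-2/\alpha}<m$, then group $m$ dyadically and interchange with $k$). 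This is the technical heart of the proof that you should not dismiss.

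Finally, for the function-space issue you flag: the paper resolves it by first approximating $u$ by smooth $u_n$ (so that in particular $f\in L_{p_0}\cap L_2$, enabling the $\cH^{\alpha,1}_{2,0}$-solvability from \cite{MR2538276}), and then obtains $v=u-w\in \bH^{\alpha,2}_{p_0,0}((0,t_0)\times B_{1-\varepsilon})$ rather than on the full $B_1$; Proposition~\ref{prop1} is applied with $r=1/2$, which is enough.
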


\begin{proof}
As in the parabolic case with local time derivative, thanks to translation and dilation, we assume that $x_0 = 1$ and $r=1$.
We further assume that $u$ and $f$ are sufficiently smooth so that, in particular, $f \in L_{p_0} \cap L_2\left((0,T) \times B_1 \right)$.
Indeed, since $u \in \bH_{p_0,0}^{\alpha,2}\left((0,T) \times B_R\right)$ for $R > 0$, there is a sequence $\{u_n\}$ satisfying $u_n \in C^\infty(\overline{(0,T) \times B_1})$, $u_n(0,x) = 0$, and \eqref{eq0316_01} as $n \to \infty$.
Then we prove the desired estimate, more precisely, \eqref{eq0316_02} with $u_n$ in place of $u$ and let $n \to \infty$.

Since $f \in L_{p_0} \cap L_2 \left((0,T) \times B_1 \right)$, one can use the results from \cite{MR2538276} to find a weak solution $w \in \cH_{2,0}^{\alpha,1}\left((0,t_0) \times B_1\right)$ satisfying \eqref{eq1.01} in $(0,t_0) \times B_1$ with the zero boundary condition on $\partial_p \left( (0,t_0) \times B_1 \right)$.
As in the proof of Proposition \ref{prop2}, we see that $w \in \bH_{p_0,0}^{\alpha,2}\left((0,t_0) \times B_{1-\varepsilon}\right)$ for any sufficiently small $\varepsilon >0$.
Set $v = u -w$, which belongs to $\bH_{p_0,0}^{\alpha,2}\left((0,t_0) \times B_{1-\varepsilon}\right)$ and satisfies \eqref{eq1.03} in $(0,t_0) \times B_{1-\varepsilon}$.
Note that because $\kappa < 1/4$,
\begin{equation}
							\label{eq0205_01}
\left(|D^2v - (D^2v)_{Q_\kappa(t_0,0)}|\right)_{Q_\kappa(t_0,0)} \leq N \kappa^\sigma [D^2v]_{C^{\sigma\alpha/2, \sigma}\left(Q_{1/4}(t_0,0)\right)}.
\end{equation}
By Proposition \ref{prop1} with $r = 1/2$ as well as the triangle inequality, we observe that
\begin{equation}
							\label{eq0205_02}
\begin{aligned}
&[D^2 v]_{C^{\sigma \alpha/2,\sigma}(Q_{1/4}(t_0,0))} \le N \sum_{j=1}^\infty j^{-(1+\alpha)} \left( |D^2v|^{p_0} \right)_{Q_{1/2}(t_0-(j-1)2^{-2/\alpha},0)}^{1/p_0}\\
& \qquad \le N
\sum_{j=1}^\infty j^{-(1+\alpha)} \left( |D^2u|^{p_0} \right)_{Q_{1/2}(t_0-(j-1)2^{-2/\alpha},0)}^{1/p_0}
\\
&\qquad \qquad +N\sum_{j=1}^\infty j^{-(1+\alpha)} \left( |D^2w|^{p_0} \right)_{Q_{1/2}(t_0-(j-1)2^{-2/\alpha},0)}^{1/p_0}.
\end{aligned}
\end{equation}
To estimate the summation involving $D^2 w$ above, we first note that by \eqref{eq3.02}, for $j=1,2,\ldots$,
$$
\left( |D^2w|^{p_0} \right)_{Q_{1/2}(t_0-(j-1)2^{-2/\alpha},0)}^{1/p_0} \leq \sum_{k=0}^\infty c_k \left(|f|^{p_0}\right)_{(s_{k+1}^j,s_k^j) \times B_1}^{1/p_0},
$$
where
$$
s_k^j = t_0-(j-1)2^{-2/\alpha}-2^k + 1.
$$
Then
\begin{align*}
&\sum_{j=1}^\infty j^{-(1+\alpha)} \left( |D^2w|^{p_0} \right)_{Q_{1/2}(t_0-(j-1)2^{-2/\alpha},0)}^{1/p_0}
\\
&\quad\leq \sum_{j=1}^\infty j^{-(1+\alpha)} \sum_{k=0}^\infty c_k \left(|f|^{p_0}\right)_{(s_{k+1}^j,s_k^j) \times B_1}^{1/p_0}
\\
&\quad= \sum_{m=1}^\infty \sum_{\substack{j \in \bN, \, j \geq 1\\ m-1 \leq (j-1)2^{-2/\alpha} < m}} j^{-(1+\alpha)} \sum_{k=0}^\infty c_k \left(|f|^{p_0}\right)_{(s_{k+1}^j,s_k^j) \times B_1}^{1/p_0}.
\end{align*}
For
$$
m-1 \leq (j-1)2^{-2/\alpha} < m,
$$
it holds that
$$
(s_{k+1}^j,s_k^j) \subset
\left(t_0 - 2^{k+1} + 1 - m, t_0 - m + 1\right).
$$
This shows that
$$
\left(|f|^{p_0}\right)_{(s_{k+1}^j,s_k^j) \times B_1}^{1/p_0} \leq 2^{1/p_0} \left(|f|^{p_0}\right)_{(s_{k+1}-m, t_0-m+1) \times B_1}^{1/p_0},
$$
where $s_k = t_0 - 2^k + 1$.
Thus,
\begin{align*}
&\sum_{j=1}^\infty j^{-(1+\alpha)} \left( |D^2w|^{p_0} \right)_{Q_{1/2}(t_0-(j-1)2^{-2/\alpha},0)}^{1/p_0}
\\
&\leq N \sum_{m=1}^\infty \sum_{\substack{j \in \bN, \, j \geq 1\\ m-1 \leq (j-1)2^{-2/\alpha} < m}} j^{-(1+\alpha)} \sum_{k=0}^\infty c_k \left(|f|^{p_0}\right)_{(s_{k+1}-m, t_0-m+1) \times B_1}^{1/p_0}
\\
&\leq N \sum_{m=1}^\infty m^{-(1+\alpha)}\sum_{k=0}^\infty c_k \left(|f|^{p_0}\right)_{(s_{k+1}-m, t_0-m+1) \times B_1}^{1/p_0}
\\
&\leq N \sum_{l=0}^\infty \sum_{m=2^l}^{2^{l+1}-1}2^{-l(1+\alpha)}\sum_{k=0}^\infty c_k \left(|f|^{p_0}\right)_{(s_{k+1}-m, t_0-m+1) \times B_1}^{1/p_0}
\\
&= N \sum_{l=0}^\infty 2^{-l(1+\alpha)} \sum_{k=0}^\infty c_k \sum_{m=2^l}^{2^{l+1}-1} \left(|f|^{p_0}\right)_{(s_{k+1}-m, t_0-m+1) \times B_1}^{1/p_0}
\\
&\leq N \sum_{l=0}^\infty 2^{-l \alpha} \sum_{k=0}^\infty c_k \left[\sum_{m=2^l}^{2^{l+1}-1} 2^{-l} \left(|f|^{p_0}\right)_{(s_{k+1}-m, t_0-m+1) \times B_1}\right]^{1/p_0},
\end{align*}
where in the last inequality we used H\"older's inequality.
We further observe that
\begin{align*}
&\sum_{m=2^l}^{2^{l+1}-1} 2^{-l} \left(|f|^{p_0}\right)_{(s_{k+1}-m, t_0-m+1) \times B_1}
\\
&= 2^{-l} 2^{-(k+1)}  \sum_{m=2^l}^{2^{l+1}-1} \int_{s_{k+1}-m}^{t_0-m+1} \dashint_{B_1} |f|^{p_0} \, dx \, dt
\\
&= 2^{-l} 2^{-(k+1)}  \sum_{m=2^l}^{2^{l+1}-1} \sum_{i=0}^{2^{k+1}-1} \left( |f|^{p_0} \right)_{Q_1(t_0-m-i+1,0)}
\\
&\leq 2^{-l-k} 2^{\min\{l,k\}} \sum_{m=2^l}^{2^{l+1}+2^{k+1}-2} \left( |f|^{p_0} \right)_{Q_1(t_0-m+1,0)}
\\
&
\leq
\left\{
\begin{aligned}
4 \left( |f|^{p_0} \right)_{(t_0-2^{l+2}+2,t_0) \times B_1} \quad &\text{for} \quad l \geq k,
\\
4 \left( |f|^{p_0} \right)_{(t_0-2^{k+2}+2,t_0) \times B_1} \quad &\text{for} \quad k > l.
\end{aligned}
\right.
\end{align*}
From the above two sets of estimates it follows that
$$
\sum_{j=1}^\infty j^{-(1+\alpha)}\left( |D^2 w|^{p_0} \right)_{Q_{1/2}(t_0-(j-1)2^{-2/\alpha},0)} \leq \sum_{k=0}^\infty c_k \left(|f|^{p_0}\right)_{(t_0-2^{k+2}+2,t_0) \times B_1}^{1/p_0},
$$
where $\{c_k\}$ is another sequence satisfying \eqref{eq0212_03}.
This combined with \eqref{eq0205_01} and \eqref{eq0205_02} shows that
\begin{align*}
\left(|D^2v - (D^2v)_{Q_\kappa(t_0,0)}|\right)_{Q_\kappa(t_0,0)} &\leq N \kappa^\sigma \sum_{j=1}^\infty j^{-(1+\alpha)} \left( |D^2u|^{p_0} \right)_{Q_{1/2}(t_0-(j-1)2^{-2/\alpha},0)}^{1/p_0}
\\
&\quad + N \kappa^\sigma \sum_{k=0}^\infty c_k \left(|f|^{p_0}\right)_{(t_0-2^{k+2}+2,t_0) \times B_1}^{1/p_0}.
\end{align*}
We then have
\begin{equation}
							\label{eq0316_02}
\begin{aligned}
&\left(|D^2 u - (D^2 u)_{Q_\kappa(t_0,0)}|\right)_{Q_\kappa(t_0,0)}
\\
&\leq \left(|D^2v - (D^2v)_{Q_\kappa(t_0,0)}|\right)_{Q_\kappa(t_0,0)} + N \kappa^{-(d+2/\alpha)/p_0} \left(|D^2w|\right)_{Q_{1/2}(t_0,0)}
\\
&\leq \kappa^\sigma \sum_{j=1}^\infty j^{-(1+\alpha)} \left( |D^2u|^{p_0} \right)_{Q_{1/2}(t_0-(j-1)2^{-2/\alpha},0)}^{1/p_0} \\
&\quad + N \kappa^{-(d+2/\alpha)/p_0}  \sum_{k=0}^\infty c_k \left(|f|^{p_0}\right)_{(t_0-2^{k+2}+2,t_0) \times B_1}^{1/p_0},
\end{aligned}
\end{equation}
where $N = N(d,\delta,\alpha,p_0)$ and we used in the last inequality  \eqref{eq3.02} along with the observation that
$$
(|f|^{p_0})^{1/p_0}_{(t_0 - 2^{k+1}+1, t_0 - 2^k+1)\times B_1} \leq  N(p_0) (|f|^{p_0})^{1/p_0}_{(t_0-2^{k+2}+2,t_0)\times B_1}.
$$
Finally, we use \eqref{eq11.17} with a scaling and $u$ in place of $v$ to get
$$
\sum_{j=1}^\infty j^{-(1+\alpha)} \left( |D^2u|^{p_0} \right)_{Q_{1/2}(t_0-(j-1)2^{-2/\alpha},0)}^{1/p_0} \leq N (\cS\cM |D^2 u|^{p_0})^{1/p_0}(t_0,0),
$$
where $N = N(\alpha)$.
The proposition is proved.
\end{proof}

\section{Weight mixed-norm estimates}
                    \label{sec5}

In this section we prove Theorem \ref{thm0210_1}.

\begin{lemma}
							\label{lem0218_1}
Let $\alpha \in (0,1]$, $T \in (0,\infty)$, $p,q \in (1,\infty)$, $K_1 \in [1,\infty)$, $w = w_1(t) w_2(x)$, where
$$
w_1(t) \in A_p(\bR,dt), \quad w_2(x) \in A_q(\bR^d, dx), \quad [w_1]_{A_p} \leq K_1, \quad [w_2]_{A_q} \leq K_1.
$$
There exist $p_0 = p_0(d,p,q,K_1) \in (1,\infty)$ and $\mu = \mu(d,p,q,K_1) \in (1,\infty)$ such that
$$
p_0 < p_0 \mu < \min\{p,q\}
$$
and the following holds.

If $u \in \bH_{p, q, w, 0}^{\alpha,2}\left((0,T) \times \bR^d\right)$ has compact support in $[0,T] \times B_{R_0}$ and satisfies
\begin{equation}
							\label{eq0218_02}
-\partial_t^\alpha u + a^{ij}(t,x) D_{ij} u = f
\end{equation}
in $(0,T) \times \bR^d$, where the coefficients $a^{ij}(t,x)$ satisfy Assumption \ref{assum0210_1} ($\gamma_0$),
then for any $(t_0,x_0) \in (0,T] \times \bR^d$, $r \in (0,\infty)$, $\kappa \in (0,1/4)$,
we have
\begin{equation}
							\label{eq0219_01}
\begin{aligned}
&\left(|D^2u - (D^2u)_{Q_{\kappa r}(t_0,x_0)}| \right)_{Q_{\kappa r}(t_0,x_0)}
\leq N \kappa^{\sigma} (\cS\cM |D^2u|^{p_0})^{1/p_0}(t_0,x_0)
\\
&\qquad\qquad + N \kappa^{-(d+2/\alpha)/p_0} \gamma_0^{1/\nu p_0} (\cS\cM |D^2u|^{\mu p_0})^{1/\mu p_0}(t_0,x_0)
\\
&\qquad\qquad + N \kappa^{-(d+2/\alpha)/p_0} \left(\cS\cM |f|^{p_0}\right)^{1/p_0}(t_0,x_0),
\end{aligned}
\end{equation}
where $\nu = \mu/(\mu-1)$, $\gamma = \gamma(d,\alpha,p_0)$, and $N = N(d,\delta, \alpha, p,q,K_1)$.
The functions $u$ and $f$ are defined to be zero whenever $t \leq 0$.
\end{lemma}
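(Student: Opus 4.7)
My strategy is coefficient-freezing together with Proposition~\ref{prop0211_1}. Fix $(t_0,x_0)$ and $r>0$, and set
$$
\bar{a}^{ij}(t) := \dashint_{B_r(x_0)} a^{ij}(t,y)\,dy,
$$
which still satisfies \eqref{eq0131_20}. Then $u$ solves
$$
-\partial_t^\alpha u + \bar{a}^{ij}(t)\, D_{ij} u = \tilde{f}, \qquad \tilde{f} := f + \bigl(\bar{a}^{ij}(t) - a^{ij}(t,x)\bigr)\, D_{ij} u,
$$
and Proposition~\ref{prop0211_1} applied with exponent $p_0$ will produce directly the leading term $N\kappa^{\sigma}(\cS\cM|D^2u|^{p_0})^{1/p_0}(t_0,x_0)$ of \eqref{eq0219_01}, together with a tail $N\kappa^{-(d+2/\alpha)/p_0}\sum_{k=0}^{\infty}c_k(|\tilde f|^{p_0})^{1/p_0}_{C_k}$, where $C_k := (t_0-(2^{k+2}-2)r^{2/\alpha},t_0)\times B_r(x_0)$ and $\{c_k\}$ obeys \eqref{eq0212_03}. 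Everything reduces to estimating the averages of $|\tilde f|^{p_0}$ on the $C_k$.

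\noindent\textbf{Handling $\tilde f$.} I split $|\tilde f|^{p_0}\lesssim |f|^{p_0}+\sum_{i,j}|a^{ij}-\bar a^{ij}|^{p_0}|D^2 u|^{p_0}$. The $|f|^{p_0}$-average on $C_k$ is bounded by $\cS\cM|f|^{p_0}(t_0,x_0)$, which, after summation against $\sum c_k$, yields the last term of \eqref{eq0219_01}. For the product piece I apply H\"older with conjugate exponents $\mu$ and $\nu=\mu/(\mu-1)$,
$$
(|a^{ij}-\bar a^{ij}|^{p_0}|D^2 u|^{p_0})^{1/p_0}_{C_k}\le (|a^{ij}-\bar a^{ij}|^{\nu p_0})^{1/(\nu p_0)}_{C_k}\cdot (|D^2u|^{\mu p_0})^{1/(\mu p_0)}_{C_k}.
$$
Using $|a^{ij}-\bar a^{ij}|\le 2\delta^{-1}$, the first factor is controlled by a power of $\dashint_{C_k}|a^{ij}-\bar a^{ij}(t)|\,dx\,dt$. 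The crucial observation is that $\bar a^{ij}(t)$ depends only on the ball $B_r(x_0)$, not on the time interval, so I can partition $C_k$ into $(2^{k+2}-2)$-many slabs of time-length $r^{2/\alpha}$ and invoke Assumption~\ref{assum0210_1}($\gamma_0$) on each resulting $Q_r$-cylinder, always subtracting the \emph{same} $\bar a^{ij}(t)$, to obtain $\dashint_{C_k}|a^{ij}-\bar a^{ij}(t)|\,dx\,dt\le \gamma_0$ when $r\le R_0$. This produces the factor $\gamma_0^{1/(\nu p_0)}$, while the second factor is dominated by $(\cS\cM|D^2u|^{\mu p_0})^{1/(\mu p_0)}(t_0,x_0)$, giving the middle term.

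\noindent\textbf{Choosing $p_0,\mu$ and the case $r>R_0$.} To pick $p_0$ and $\mu$ I invoke the openness (self-improvement) of the Muckenhoupt classes: $[w_1]_{A_p}\le K_1$ gives $w_1\in A_{p/\mu_1}(\bR,dt)$ and $[w_2]_{A_q}\le K_1$ gives $w_2\in A_{q/\mu_2}(\bR^d,dx)$ for some $\mu_1,\mu_2>1$ depending only on $d,p,q,K_1$; choosing $\mu>1$ slightly below $\min\{\mu_1,\mu_2\}$ and then $p_0>1$ sufficiently close to $1$ yields $1<p_0<p_0\mu<\min\{p,q\}$ and, crucially, guarantees that the strong maximal functions appearing on the right-hand side will act boundedly on the relevant weighted mixed-norm spaces when this lemma is deployed in Section~\ref{sec5}. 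For $r>R_0$, I exploit the compact support of $u$ in $B_{R_0}$: either $Q_{\kappa r}(t_0,x_0)$ is disjoint from $[0,T]\times B_{R_0}$ and the LHS vanishes, or one runs the freezing argument with the spatial ball replaced by $B_{R_0}$ and absorbs the resulting geometric factors into $\cS\cM$. The subtlest point is the extension of the BMO bound to the time-elongated $C_k$: it works only because $\bar a^{ij}(t)$ is defined by spatial averaging alone and is therefore \emph{uniform across time slabs}; without this uniformity, Assumption~\ref{assum0210_1}($\gamma_0$), which a~priori controls oscillation only on parabolic cylinders $Q_r$ of time length $r^{2/\alpha}$, could not be iterated to a $\gamma_0$-control on the much longer $C_k$.
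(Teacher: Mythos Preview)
Your approach is essentially the same as the paper's: freeze the coefficients in space, apply Propositions~\ref{prop0204_1} (for $\alpha=1$) and~\ref{prop0211_1} (for $\alpha\in(0,1)$) with the perturbed right-hand side $\tilde f$, then use H\"older with exponents $\mu,\nu$ and the time-slab decomposition of $C_k$ to extract $\gamma_0^{1/(\nu p_0)}$. Two small points the paper makes explicit that you should tighten: (i) before invoking Proposition~\ref{prop0211_1} you must check $u\in\bH^{\alpha,2}_{p_0\mu,0,\operatorname{loc}}((0,T)\times\bR^d)$, which the paper obtains from the self-improvement $w_1\in A_{p/p_0\mu}$, $w_2\in A_{q/p_0\mu}$ and a weighted-to-unweighted embedding (see the proof of \cite[Lemma~5.10]{MR3812104}); (ii) for $r>R_0$ the paper does not re-run the argument on $B_{R_0}$ but instead \emph{redefines} $\bar a^{ij}(t):=\dashint_{B_{R_0}}a^{ij}(t,y)\,dy$, inserts the factor $\chi_{B_{R_0}}$ coming from $\operatorname{supp} u\subset B_{R_0}$ into the coefficient average, and then uses $|B_{R_0}|\le|B_r(x_0)|$ to pass from the average over $B_r(x_0)$ to the average over $B_{R_0}$, which is cleaner than your dichotomy.
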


\begin{proof}
For the given $w_1 \in A_p(\bR, d t)$ and $w_2 \in A_q(\bR^d, dx)$, using the reverse H\"{o}lder's inequality for $A_p$ weights, we find
$$
\sigma_1 = \sigma_1(d,p,K_1), \quad \sigma_2 = \sigma_2(d,q,K_1)
$$
such that $p - \sigma_1 >1$, $q - \sigma_2 > 1$, and
$$
w_1 \in A_{p-\sigma_1}(\bR, d t), \quad w_2 \in A_{q-\sigma_2}(\bR^d, dx).
$$
Set $p_0, \mu \in (1,\infty)$ so that
$$
p_0 \mu = \min \left\{ \frac{p}{p-\sigma_1}, \frac{q}{q-\sigma_2} \right\} > 1.
$$
Note that
$$
w_1 \in A_{p-\sigma_1} \subset A_{\frac{p}{p_0\mu}} \subset A_{\frac{p}{p_0}}(\bR, dt),
$$
$$
w_2 \in A_{q-\sigma_2} \subset A_{\frac{q}{p_0\mu}} \subset A_{\frac{q}{p_0}}(\bR^d, dx).
$$
From these inclusions and the fact that $u \in \bH_{p,q,w,0}^{\alpha,2}\left((0,T) \times \bR^d\right)$ it follows that (see the proof of \cite[Lemma 5.10]{MR3812104})
$$
u \in \bH_{p_0\mu, 0, \operatorname{loc}}^{\alpha,2}\left((0,T) \times \bR^d\right).
$$
In particular, if $\alpha = 1$, by extending $u$ as zero for $t < 0$, we see that
$$
u \in W_{p_0\mu,\operatorname{loc}}^{1,2}\left((-\infty,T) \times \bR^d\right).
$$
Set
$$
\bar{a}^{ij}(t) =
\left\{
\begin{aligned}
\dashint_{B_r(x_0)} a^{ij}(t,y) \, dy \quad &\text{if} \quad r \leq R_0,
\\
\dashint_{B_{R_0}} a^{ij}(t,y) \, dy \quad &\text{if} \quad r > R_0,
\end{aligned}
\right.
$$
and write
$$
-\partial_t^\alpha u + \bar{a}^{ij}(t) D_{ij} u = \tilde{f},
$$
where
$$
 \tilde{f} = f + \left(\bar{a}^{ij}(t) - a^{ij}(t,x) \right) D_{ij} u.
$$
Then by Propositions \ref{prop0204_1} and \ref{prop0211_1} it follows that
\begin{equation}
							\label{eq0218_01}
\begin{aligned}
\left(|D^2u - (D^2u)_{Q_{\kappa r}(t_0,x_0)}| \right)_{Q_{\kappa r}(t_0,x_0)} &\leq N \kappa^{\sigma} (\cS\cM |D^2u|^{p_0})^{1/p_0}(t_0,x_0)
\\
+ N \kappa^{-(d+2/\alpha)/p_0} \sum_{k=0}^\infty c_k & \left(|\tilde{f}|^{p_0}\right)_{\left(t_0-(2^{k+2}-2)r^{2/\alpha},t_0\right) \times B_r(x_0)}^{1/p_0},
\end{aligned}
\end{equation}
where $\{c_k\}$ satisfies \eqref{eq0212_04} or
\eqref{eq0212_03} and $N = N(d, \delta,\alpha,p_0) = N(d,\delta,\alpha,p,q,K_1)$.
Using the fact that $u$ has compact support in $[0,T] \times B_{R_0}$ and H\"older's inequality, we write
$$
\left(|\tilde{f}|^{p_0}\right)_{\left(t_0-(2^{k+2}-2)r^{2/\alpha},t_0\right) \times B_r(x_0)}^{1/p_0} \leq \left(|f|^{p_0}\right)_{\left(t_0-(2^{k+2}-2)r^{2/\alpha},t_0\right) \times B_r(x_0)}^{1/p_0}
$$
$$
+ \left(|\bar{a}^{ij} - a^{ij}|^{\nu p_0} \chi_{B_{R_0}} \right)_{\left(t_0-(2^{k+2}-2)r^{2/\alpha},t_0\right) \times B_r(x_0)}^{1/\nu p_0}
$$
$$
\cdot \left(|D^2u|^{\mu p_0}\right)_{\left(t_0-(2^{k+2}-2)r^{2/\alpha},t_0\right) \times B_r(x_0)}^{1/\mu p_0},
$$
where by Assumption \ref{assum0210_1} and the boundedness of $a^{ij}$ (also see Remark 2.3 in \cite{MR3899965}), for $r \leq R_0$,
\begin{align*}
&\left(|\bar{a}^{ij} - a^{ij}|^{\nu p_0}\chi_{B_{R_0}}\right)_{\left(t_0-(2^{k+2}-2)r^{2/\alpha},t_0\right) \times B_r(x_0)}\\
&\leq \left(|\bar{a}^{ij} - a^{ij}|^{\nu p_0}\right)_{\left(t_0-(2^{k+2}-2)r^{2/\alpha},t_0\right) \times B_r(x_0)}  \leq N \gamma_0,
\end{align*}
and, for $r > R_0$,
\begin{align*}
&\left(|\bar{a}^{ij} - a^{ij}|^{\nu p_0}\chi_{B_{R_0}}\right)_{\left(t_0-(2^{k+2}-2)r^{2/\alpha},t_0\right) \times B_r(x_0)}\\
&\leq \left(|\bar{a}^{ij} - a^{ij}|^{\nu p_0}\right)_{\left(t_0-(2^{k+2}-2)r^{2/\alpha},t_0\right) \times B_{R_0}} \leq N \gamma_0.
\end{align*}
Thus,
\begin{align*}
&\sum_{k=0}^\infty c_k \left(|\tilde{f}|^{p_0}\right)_{\left(t_0-(2^{k+2}-2)r^{2/\alpha},t_0\right) \times B_r(x_0)}^{1/p_0}
\leq \sum_{k=0}^\infty c_k\left(|f|^{p_0}\right)_{\left(t_0-(2^{k+2}-2)r^{2/\alpha},t_0\right) \times B_r(x_0)}^{1/p_0}\\
&\quad + N \gamma_0^{1/\nu p_0} \sum_{k=0}^\infty c_k \left(|D^2u|^{\mu p_0}\right)_{\left(t_0-(2^{k+2}-2)r^{2/\alpha},t_0\right) \times B_r(x_0)}^{1/\mu p_0}\\
&\leq N \left(\cS\cM |f|^{p_0}\right)^{1/p_0}(t_0,x_0) + N \gamma_0^{1/\nu p_0} \left(\cS\cM |D^2 u|^{\mu p_0}\right)^{1/\mu p_0}(t_0,x_0).
\end{align*}
This inequality together with \eqref{eq0218_01} proves \eqref{eq0219_01}.
The lemma is proved.
\end{proof}

For each integer $n \in \bZ$, find an integer $k(n)$ such that
$$
k(n)  \leq \frac{2}{\alpha}n < k(n) + 1.
$$
Note that
$k(n+1) - k(n)$ is a non-negative integer and
$$
\frac{1}{2^{k(n)+1}} < \frac{1}{2^{2n/\alpha}} \leq \frac{1}{2^{k(n)}}.
$$
Let
$$
\bC_n := \left\{ Q^n_{\vec{i}} = Q^n_{(i_0,i_1,\ldots,i_d)}: \vec{i} = (i_0, i_1, \ldots, i_d) \in \bZ^{d+1}\right\},
$$
where $n \in \bZ$ and
$$
Q^n_{\vec{i}}
= \left[ \frac{i_0}{2^{k(n)}}+T, \frac{i_0+1}{2^{k(n)}}+T\right) \times \left[ \frac{i_1}{2^n}, \frac{i_1+1}{2^n}\right) \times \cdots \times \left[ \frac{i_d}{2^n}, \frac{i_d+1}{2^n}\right).
$$
Note that $\{\bC_n\}_{n \in \bZ}$ is a collection of partitions of $\bR^{d+1}$ satisfying \cite[Theorem 2.1]{MR3812104} with respect to the parabolic distance
$$
|(t,x)-(s,y)| := \max\{ |x-y|, |t-s|^{\alpha/2}\}.
$$
In particular, for each $n \in \bZ$, $Q^n_{\vec{i}}$ belongs to either $(-\infty,T) \times \bR^d$ or $[T, \infty) \times \bR^d$.
Also note that, for each $Q_{\vec{i}}^n$, where $Q_{\vec{i}}^n \cap \left((-\infty,T) \times \bR^d\right) \neq \emptyset$, there exists $Q_r(t_0,x_0)$ such that $t_0 \in (-\infty,T]$ and
$$
Q_{\vec{i}}^n \subset Q_r(t_0,x_0), \quad |Q_r(t_0,x_0)| \leq N |Q_{\vec{i}}^n|,
$$
where $N  = N(d,\alpha)$.
Indeed, we can take, for example,
$$
t_0 = \frac{i_0+1}{2^{k(n)}}+T, \quad x_0=({x_0}_1, \ldots, {x_0}_d), \quad {x_0}_j = \frac{2i_l+1}{2^{n+1}},
$$
and
$$
r = \max\{\sqrt{d}/2,2^{\alpha/2}\}2^{-n}.
$$
Denote the dyadic sharp function of $g$ by
$$
g^{\#}_{\operatorname{dy}}(t,x) = \sup_{n < \infty} \dashint_{Q^n_{\vec{i}} \ni (t,x)} \left| g(s,y) - g_{|n}(t,x)\right| \, dy \, ds,
$$
where
$$
g_{|n}(t,x) = \dashint_{Q_{\vec{i}}^n} g(s,y) \, dy \, ds, \quad (t,x) \in Q_{\vec{i}}^n.
$$

\begin{theorem}[Maximal function theorem for strong maximal functions]
							\label{thm0225_1}
Let $p, q \in (1,\infty)$, $K_1 \in [1, \infty)$, $w_1(t) \in A_p(\bR,dt)$, $w_2(x) \in A_q(\bR, dx)$, $[w_1]_{A_p} \leq K_1$, $[w_2]_{A_q} \leq K_1$, and $w(t,x) = w_1(t)w_2(x)$.
Then, for any $f \in L_{p,q,w}(\bR \times \bR^d)$, we have
$$
\|\cS\cM f\|_{L_{p,q,w}(\bR \times \bR^d)} \leq N \|f\|_{L_{p,q,w}(\bR \times \bR^d)},
$$
where $N = N(d,p,q,K_1) > 0$.
\end{theorem}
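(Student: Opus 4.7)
The plan is to dominate the strong maximal function $\cS\cM f$ pointwise by the composition of the Hardy--Littlewood maximal operator in space and a one-dimensional maximal operator in time, and then exploit the tensor product structure $w(t,x)=w_{1}(t)w_{2}(x)$ to reduce to scalar weighted Hardy--Littlewood inequalities in each variable. For the pointwise step, I observe that if $Q_{r_{1},r_{2}}(s,y)\ni(t,x)$ then $B_{r_{2}}(y)\subset B_{2r_{2}}(x)$, so for each $r$,
$$
\dashint_{B_{r_{2}}(y)}|f(r,z)|\,dz \;\le\; 2^{d}\,\cM_{x}f(r,x),
$$
where $\cM_{x}$ is the Hardy--Littlewood maximal function in $\bR^{d}$ acting on $f(r,\cdot)$. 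Writing $g(r,x):=\cM_{x}f(r,x)$, averaging in $r$ over $(s-r_{1}^{2/\alpha},s)$, and taking the supremum over all admissible cylinders containing $(t,x)$ yields
$$
\cS\cM f(t,x) \;\le\; 2^{d}\,\cM_{t}\bigl[g(\cdot,x)\bigr](t),
$$
where $\cM_{t}$ is the one-dimensional Hardy--Littlewood maximal operator in $t$ (any interval of the form $(s-h,s)$ containing $t$ is just a general interval containing $t$, so $\cM_{t}$ is majorized by the standard two-sided HL maximal operator).

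For the weighted step, I would apply the scalar weighted Hardy--Littlewood inequality for $\cM_{x}$ in the space variable at each fixed $r$, using the Muckenhoupt hypothesis on $w_{2}$, to get $\|\cM_{x}f(r,\cdot)\|_{L^{p}(w_{2})}\le C_{2}\|f(r,\cdot)\|_{L^{p}(w_{2})}$; raising to the $q$-th power, multiplying by $w_{1}(r)$, and integrating in $r$ give
$$
\|g\|_{L_{p,q,w}(\bR\times\bR^{d})} \;\le\; C_{2}\,\|f\|_{L_{p,q,w}(\bR\times\bR^{d})}.
$$
For the time direction, I would invoke the continuous Fefferman--Stein vector-valued weighted maximal inequality for $\cM_{t}$, regarding $L^{p}(\bR^{d},w_{2}\,dx)$ as a Banach-lattice-valued target, which together with the Muckenhoupt hypothesis on $w_{1}$ yields
$$
\bigl\|\cM_{t}[g(\cdot,x)](t)\bigr\|_{L_{p,q,w}(\bR\times\bR^{d})} \;\le\; C_{1}\,\|g\|_{L_{p,q,w}(\bR\times\bR^{d})}.
$$
Concatenating the three estimates produces $\|\cS\cM f\|_{L_{p,q,w}}\le 2^{d}C_{1}C_{2}\|f\|_{L_{p,q,w}}$, with constants depending only on $d,p,q,K_{1}$.

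The main technical obstacle is the vector-valued Fefferman--Stein bound in time: a direct application of Minkowski's integral inequality only delivers one direction of the required interchange between the iterated norms $L^{p}_{x}L^{q}_{t}$ and $L^{q}_{t}L^{p}_{x}$ (sharp only when $p=q$) and is therefore insufficient in general. One therefore either cites the continuous Fefferman--Stein inequality in weighted spaces directly, or derives it by a Rubio~de~Francia extrapolation argument in the spirit of the weighted Fefferman--Stein sharp function theorem of \cite{MR3812104}. Either route relies crucially on the product structure $w=w_{1}w_{2}$ and on the fact that $\cM_{t}$ and $\cM_{x}$ act on commuting groups of variables, so that Fubini together with the scalar weighted bounds can be applied fiberwise in a clean way.
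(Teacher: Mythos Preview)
Your proposal is correct and takes a different, arguably more hands-on, route than the paper. The paper's own proof is a two-liner: for $p=q$ it invokes Jawerth's weighted strong-maximal-function theorem \cite{B86}, and for general $p,q$ it applies Rubio de Francia extrapolation (citing \cite{MR745140} and \cite[Theorem~2.5]{MR3812104}). Your pointwise domination $\cS\cM f\le 2^{d}\,\cM_{t}(\cM_{x}f)$ is the classical iteration trick for one-parameter maximal operators, and for $p=q$ it yields a more self-contained argument than the paper's: Fubini together with two applications of the scalar weighted Hardy--Littlewood inequality (once in $x$ with $w_{2}$, once in $t$ with $w_{1}$) finishes the job without appealing to \cite{B86}. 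For $p\neq q$ your argument still needs either the weighted Fefferman--Stein vector-valued maximal inequality or, equivalently, extrapolation---so at that stage the two approaches essentially coincide, and you correctly identify this as the nontrivial step.

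One bookkeeping caution: the scalar weighted Hardy--Littlewood bound $\|\cM_{x}h\|_{L^{r}(w_{2})}\le C\|h\|_{L^{r}(w_{2})}$ requires $w_{2}\in A_{r}$, so the exponent you use for the inner spatial norm must match the $A$-class assumed on $w_{2}$ in the paper's convention for $L_{p,q,w}$ (and likewise for $w_{1}$ in the time direction). In your write-up the inner norm is written as $L^{p}(w_{2})$ while the hypothesis is $w_{2}\in A_{q}$; align these consistently with whatever the paper's mixed-norm convention actually is before finalizing.
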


\begin{proof}
When $p=q$, this follows from \cite[Theorem 1.1]{B86}. The general case is a consequence of the extrapolation theorem of Rubio de Francia \cite{MR745140}. See also \cite[Theorem 2.5]{MR3812104}.
\end{proof}

\begin{lemma}
							\label{lem0221_1}
Let $\alpha \in (0,1]$, $T \in (0,\infty)$, $p, q \in (1,\infty)$, $K_1 \in [1,\infty)$, $w = w_1(t) w_2(x)$, where
$$
w_1(t) \in A_p(\bR,dt), \quad w_2(x) \in A_q(\bR^d, dx), \quad [w_1]_{A_p} \leq K_1, \quad [w_2]_{A_q} \leq K_1.
$$
There exists $\gamma_0 = \gamma_0(d,\delta,\alpha,p,q,K_1) > 0$ such that, under Assumption \ref{assum0210_1} ($\gamma_0$),
for any $u \in \bH_{p, q, w, 0}^{\alpha,2}\left((0,T) \times \bR^d\right)$ with compact support in $[0,T] \times B_{R_0}$ satisfying \eqref{eq0218_02} in $(0,T) \times \bR^d$, we have
\begin{equation}
							\label{eq0220_01}
\|\partial_t^\alpha u\|_{L_{p,q,w}\left((0,T) \times \bR^d\right)} + \|D^2u\|_{L_{p,q,w}\left((0,T) \times \bR^d\right)} \leq N \|f\|_{L_{p,q,w}\left((0,T) \times \bR^d\right)},
\end{equation}
where $N = N(d,\delta,\alpha,p,q,K_1)$.
\end{lemma}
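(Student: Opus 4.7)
The plan is to combine the pointwise mean oscillation bound of Lemma \ref{lem0218_1} with a Fefferman--Stein-type sharp function theorem for the dyadic filtration $\{\bC_n\}$ and the strong maximal function estimate of Theorem \ref{thm0225_1}, in the spirit of \cite{MR3812104}. I would extend $u$ and $f$ by zero for $t \le 0$, first establish the bound \eqref{eq0220_01} for $\|D^2 u\|_{L_{p,q,w}}$, and then read off the bound on $\|\partial_t^\alpha u\|_{L_{p,q,w}}$ directly from the equation \eqref{eq0218_02} together with $|a^{ij}| \le \delta^{-1}$, which give $|\partial_t^\alpha u| \le N|D^2 u| + |f|$ pointwise.

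The first step is to translate Lemma \ref{lem0218_1} into a pointwise bound on the dyadic sharp function $(D^2u)^{\#}_{\operatorname{dy}}$. For every $Q^n_{\vec{i}} \in \bC_n$ meeting $(-\infty,T)\times\bR^d$ nontrivially, I would pick, as in the discussion before Theorem \ref{thm0225_1}, a parabolic cylinder $Q_{\kappa r}(t_0,x_0)$ with $t_0 \in (0,T]$, $Q^n_{\vec{i}} \subset Q_{\kappa r}(t_0,x_0)$, and $|Q_{\kappa r}(t_0,x_0)| \le N(d,\alpha)|Q^n_{\vec{i}}|$ (cubes contained in $\{t \le 0\}$ contribute nothing since $u$ and $f$ vanish there). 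Then, using the elementary inequality
$$
\dashint_{Q^n_{\vec{i}}}|D^2 u - (D^2u)_{Q^n_{\vec{i}}}| \le 2\dashint_{Q^n_{\vec{i}}}|D^2 u - (D^2u)_{Q_{\kappa r}(t_0,x_0)}| \le N(\kappa)\dashint_{Q_{\kappa r}(t_0,x_0)}|D^2 u - (D^2u)_{Q_{\kappa r}(t_0,x_0)}|,
$$
combined with Lemma \ref{lem0218_1} and a standard engulfing argument that allows $\cS\cM g(t_0,x_0)$ to be replaced by $N(d,\alpha)\cS\cM g(t,x)$ for $(t,x)$ in the same dyadic cube, I obtain, with $\beta = (d+2/\alpha)/p_0$, the pointwise bound
$$
(D^2u)^{\#}_{\operatorname{dy}}(t,x) \le N\kappa^{\sigma}(\cS\cM|D^2u|^{p_0})^{1/p_0}(t,x) + N\kappa^{-\beta}\gamma_0^{1/\nu p_0}(\cS\cM|D^2u|^{\mu p_0})^{1/\mu p_0}(t,x) + N\kappa^{-\beta}(\cS\cM|f|^{p_0})^{1/p_0}(t,x).
$$

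Next, I would invoke the dyadic Fefferman--Stein sharp function theorem in weighted mixed-norm spaces (available from \cite{MR3812104}, as $\{\bC_n\}$ satisfies the hypotheses of Theorem 2.1 there) to get $\|D^2 u\|_{L_{p,q,w}} \le N\|(D^2u)^{\#}_{\operatorname{dy}}\|_{L_{p,q,w}}$. By the construction of $p_0, \mu$ in Lemma \ref{lem0218_1}, $w_1 \in A_{p/(\mu p_0)}(\bR,dt)$ and $w_2 \in A_{q/(\mu p_0)}(\bR^d,dx)$, so Theorem \ref{thm0225_1} applied in $L_{p/p_0, q/p_0, w}$ and $L_{p/(\mu p_0), q/(\mu p_0), w}$ yields $\|(\cS\cM|g|^{p_0})^{1/p_0}\|_{L_{p,q,w}} \le N\|g\|_{L_{p,q,w}}$ and the analogous inequality with $\mu p_0$ in place of $p_0$. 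Taking $L_{p,q,w}$-norms in the pointwise bound above gives
$$
\|D^2 u\|_{L_{p,q,w}} \le N(\kappa^{\sigma}+\kappa^{-\beta}\gamma_0^{1/\nu p_0})\|D^2 u\|_{L_{p,q,w}} + N\kappa^{-\beta}\|f\|_{L_{p,q,w}}.
$$
I then choose $\kappa$ small so that $N\kappa^\sigma \le 1/4$, and afterwards $\gamma_0$ small so that $N\kappa^{-\beta}\gamma_0^{1/\nu p_0} \le 1/4$. Since $\|D^2u\|_{L_{p,q,w}} < \infty$ by $u \in \bH_{p,q,w,0}^{\alpha,2}$, the two bad terms may be absorbed into the left-hand side, yielding $\|D^2 u\|_{L_{p,q,w}} \le N\|f\|_{L_{p,q,w}}$ and completing the proof.

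I expect the main technical obstacle to be not the absorption itself, which is routine, but rather the first-step transfer of the $\cS\cM$-estimates from the auxiliary point $(t_0,x_0)$ to an arbitrary $(t,x) \in Q^n_{\vec{i}}$, since the supremum defining $(D^2u)^{\#}_{\operatorname{dy}}(t,x)$ runs only over dyadic cubes containing $(t,x)$. This is standard but must be carried out with care because of the anisotropy $r_1 \ne r_2$ built into the strong maximal function, and is most cleanly handled by observing that every rectangle $Q_{r_1,r_2}(s,y) \ni (t_0,x_0)$ can be enlarged by a fixed factor depending only on $d$ and $\alpha$ to contain $(t,x)$ as well.
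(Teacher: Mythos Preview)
Your proposal is correct and follows essentially the same route as the paper: pass from the mean oscillation estimate of Lemma~\ref{lem0218_1} to a pointwise bound on $(D^2u)^{\#}_{\operatorname{dy}}$ via the dyadic filtration $\{\bC_n\}$, apply the weighted mixed-norm Fefferman--Stein theorem of \cite{MR3812104} together with Theorem~\ref{thm0225_1}, choose $\kappa$ and then $\gamma_0$ small to absorb, and read off $\|\partial_t^\alpha u\|_{p,q,w}$ from the equation. The only difference is that you make explicit the engulfing step transferring $\cS\cM g(t_0,x_0)$ to $\cS\cM g(t,x)$ for $(t,x)$ in the same dyadic cube, whereas the paper asserts the pointwise sharp-function bound directly; this is harmless, since all the rectangles entering the bound in Lemma~\ref{lem0218_1} already contain every point of $Q_{\kappa r}(t_0,x_0)$ after enlargement by a fixed factor.
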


\begin{proof}
By using the partitions $\bC_n$ and the dyadic sharp function introduced above, from \eqref{eq0219_01} we obtain that
\begin{align*}
(D^2u)_{\operatorname{dy}}^\#(t_0,x_0) &\leq N \kappa^\sigma (\cS\cM |D^2u|^{p_0})^{1/p_0}(t_0,x_0)
\\
&+ N \kappa^{-(d+2/\alpha)/p_0} \gamma_0^{1/\nu p_0} (\cS\cM |D^2u|^{\mu p_0})^{1/\mu p_0}(t_0,x_0)
\\
&+ N \kappa^{-(d+2/\alpha)/p_0} \left(\cS\cM |f|^{p_0}\right)^{1/p_0}(t_0,x_0)
\end{align*}
for any $(t_0,x_0) \in \bR \times \bR^d$, provided that $D^2u$ is defined to be zero on $(T,\infty) \times \bR^d$.
Indeed, for $(t_0,x_0) \in (T,\infty) \times \bR^d$, we see that
$$
(D^2u)_{\operatorname{dy}}^{\#}(t_0,x_0) = 0
$$
by the choice of the partitions.
Then by the sharp function theorem (see \cite[Corollary 2.7]{MR3812104}) and Theorem \ref{thm0225_1} we get
$$
\|D^2u\|_{p,q,w} \leq N \left(\kappa^\sigma + \kappa^{-(d+2/\alpha)/p_0} \gamma_0^{1/\nu p_0}\right) \|D^2u\|_{p,q,w}
$$
$$
+ N \kappa^{-(d+2/\alpha)/p_0} \|f\|_{p,q,w},
$$
where $\|\cdot\|_{p,q,w} = \|\cdot\|_{L_{p,q,w}\left((0,T) \times \bR^d\right)}$ and $N = N(d,\delta,\alpha,p,q,K_1)$.
Now by first taking a sufficiently small $\kappa < 1/4$, then taking small $\gamma_0$ so that
$$
\kappa^\sigma + \kappa^{-(d+2/\alpha)/p_0} \gamma_0^{1/\nu p_0} < \frac{1}{2N},
$$
we arrive at
$$
\|D^2u\|_{L_{p,q,w}\left((0,T) \times \bR^d\right)} \leq N \|f\|_{L_{p,q,w}\left((0,T) \times \bR^d\right)}.
$$
Then, using this estimate and the equation, we obtain \eqref{eq0220_01}.
The lemma is proved.
\end{proof}

\begin{corollary}
							\label{cor0221_01}
Let $\alpha \in (0,1]$, $T \in (0,\infty)$, $p \in (1,\infty)$, $K_1 \in [1,\infty)$, $w = w_1(t) w_2(x)$, where
$$
w_1(t) \in A_p(\bR,dt), \quad w_2(x) \in A_q(\bR^d, dx), \quad [w_1]_{A_p} \leq K_1, \quad [w_2]_{A_q} \leq K_1.
$$
There exists $\gamma_0 = \gamma_0(d,\delta,\alpha,p,q,K_1) > 0$ such that, under Assumption \ref{assum0210_1} ($\gamma_0$),
for any $u \in \bH_{p, q, w, 0}^{\alpha,2}\left((0,T) \times \bR^d\right)$ satisfying \eqref{eq0210_02} in $(0,T) \times \bR^d$, we have
\begin{equation}
							\label{eq0220_02}
\|\partial_t^\alpha u\|_{p,q,w} + \|D^2u\|_{p,q,w} \leq N_0 \|f\|_{p,q,w} + N_1\|u\|_{p,q,w},
\end{equation}
where $N_0 = N_0(d,\delta,\alpha,p,q,K_1)$, $N_1 = N_1(d, \delta,\alpha,p,q, K_1,K_0,R_0)$, and $\|\cdot\|_{p,q,w} = \|\cdot\|_{L_{p,q,w}\left((0,T) \times \bR^d\right)}$.
\end{corollary}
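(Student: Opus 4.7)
The plan is to deduce Corollary~\ref{cor0221_01} from Lemma~\ref{lem0221_1} in two steps: first, a spatial cutoff to dispose of the compact-support hypothesis; second, an interpolation/absorption step for the first-order term that arises once the lower-order coefficients are moved to the right-hand side.

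I begin by rewriting \eqref{eq0210_02} in the form \eqref{eq0218_02} with effective forcing $\tilde f := f - b^i D_i u - c u$. To localize in space, fix a nonnegative $\zeta \in C_0^\infty(B_{R_0})$ with $\zeta \equiv 1$ on $B_{R_0/2}$ and $|D^k \zeta| \le C R_0^{-k}$ for $k=0,1,2$, and set $\zeta_y(x) := \zeta(x-y)$ for $y \in \bR^d$. Each product $u\zeta_y$ has compact spatial support in $B_{R_0}(y)$ and solves
\[
-\partial_t^\alpha (u\zeta_y) + a^{ij} D_{ij}(u\zeta_y) = \zeta_y \tilde f + 2 a^{ij}(D_i u)(D_j \zeta_y) + a^{ij} u\, D_{ij}\zeta_y.
\]
Since Assumption~\ref{assum0210_1} and the Muckenhoupt constants of $w_1, w_2$ are invariant under spatial translation, and since $\|g\|_{p,q,w} = \|g(\cdot,\cdot+y)\|_{p,q,\,w(\cdot,\cdot+y)}$, a shift by $-y$ puts the problem for $u\zeta_y$ into the framework of Lemma~\ref{lem0221_1}, yielding
\begin{align*}
\|\partial_t^\alpha(u\zeta_y)\|_{p,q,w} + \|D^2(u\zeta_y)\|_{p,q,w}
&\le N \|\zeta_y f\|_{p,q,w} + N K_0 \|(|Du|+|u|)\zeta_y\|_{p,q,w} \\
&\quad + N R_0^{-1} \|Du \cdot \chi_{B_{R_0}(y)}\|_{p,q,w} + N R_0^{-2} \|u \cdot \chi_{B_{R_0}(y)}\|_{p,q,w}.
\end{align*}
Because $\zeta_y \equiv 1$ on $B_{R_0/2}(y)$, the left-hand side dominates $\|(|D^2 u|+|\partial_t^\alpha u|)\chi_{B_{R_0/2}(y)}\|_{p,q,w}$. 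Integrating in $y \in \bR^d$ and using the identity $\int_{\bR^d} \chi_{B_r(y)}(x)\, dy = |B_r|$ together with Minkowski's integral inequality to commute the $y$-integration with the mixed $L^q_t(L^p_x(w_2))$-norm, the global norms on the left are recovered, and we arrive at
\[
\|\partial_t^\alpha u\|_{p,q,w} + \|D^2 u\|_{p,q,w} \le N_0 \|f\|_{p,q,w} + N \|Du\|_{p,q,w} + N_1 \|u\|_{p,q,w}.
\]

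The remaining $\|Du\|_{p,q,w}$ term is disposed of by the standard weighted mixed-norm interpolation inequality
\[
\|Du\|_{p,q,w} \le \varepsilon \|D^2 u\|_{p,q,w} + N(\varepsilon) \|u\|_{p,q,w},
\]
which follows from a suitable spatial Gagliardo--Nirenberg inequality applied in each time slice and then integrated in $t$ against $w_1\,dt$. Choosing $\varepsilon$ small enough to absorb the gradient term into the left-hand side yields \eqref{eq0220_02} with constants of the prescribed dependence.

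The main technical obstacle lies in the partition-of-unity / $y$-integration step in the weighted mixed-norm setting. When $q \ge p$, Minkowski's integral inequality directly gives the required continuous embedding $L^q_t(L^p_y) \hookrightarrow L^p_y(L^q_t)$; when $q < p$, however, the inequality reverses, and one must rearrange the argument, either via a discrete partition of unity with bounded overlap and a summation exponent adapted to $q$, or via a duality pairing with the predual space. Once this spatial-localization step is in hand, the translation-invariance of the weight constants, the application of Lemma~\ref{lem0221_1}, the interpolation, and the absorption are all routine.
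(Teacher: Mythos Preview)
Your overall strategy---move the lower-order terms to the right, localize in space to feed into Lemma~\ref{lem0221_1}, then interpolate to absorb $\|Du\|$---matches the paper's. The difference is in how the localization is carried out in the mixed-norm setting, and this is exactly where your proposal stalls.

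You attempt the partition of unity directly in $L_{p,q,w}$ and correctly flag that the $y$-integration step breaks down when $q<p$; your suggested fixes (discrete partition with bounded overlap, or duality) are plausible in spirit but are not worked out, and in fact neither is straightforward to push through for a genuinely anisotropic $L^q_t(L^p_x)$ norm. The paper sidesteps this difficulty entirely: it first proves the estimate in the \emph{unmixed} case $p=q$ (where the spatial partition of unity is routine, since raising to the $p$-th power and using Fubini immediately reassembles the global norm), obtaining \eqref{eq0220_02} for all $A_p$ weights $w=w_1w_2$; it then invokes the Rubio de Francia extrapolation theorem (as in \cite[Theorem~2.5]{MR3812104}) to pass from the full family of weighted $L_p$ estimates to the weighted mixed-norm estimate for arbitrary $p,q$. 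This two-step route---unmixed first, then extrapolate---is the key device you are missing, and it is what makes the argument clean. Your interpolation/absorption step for $\|Du\|$ is the same as the paper's.
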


\begin{proof}
We first consider the case $p=q$.
Write
$$
-\partial_t^\alpha u + a^{ij}D_{ij}u = f - b^i D_i u - cu.
$$
Then using Lemma \ref{lem0221_1} with $p=q$, and using the partition of unity with respect to the spatial variables, we have
$$
\|\partial_t^\alpha u\|_{p,w} + \|D^2 u\|_{p,w} \leq N_0\|f\|_{p,w} + N_1\|Du\|_{p,w} + N_1\|u\|_{p,w},
$$
where $N_0 = N_0(d,\delta,\alpha,p,q,K_1)$, $N_1 = N_1(d, \delta,\alpha,p,q,K_1,K_0,R_0)$, and $\|\cdot\|_{p,w}=\|\cdot\|_{L_{p,w}\left((0,T) \times \bR^d\right)}$.
Then we use an interpolation inequality (see \cite[Lemma 3.5 (iii)]{arXiv:1806.00077}) to derive \eqref{eq0220_02} for $p=q$.

For $p \neq q$, we use the extrapolation theorem. See \cite[Theorem 2.5]{MR3812104}.
\end{proof}

To estimate $\|u\|_{L_{p,q,w}\left((0,T) \times \bR^d\right)}$ on the right-hand side of \eqref{eq0220_02}, we need the following observation.

\begin{lemma}
                \label{lem0226_1}
Let $\alpha \in (0,1)$, $p, q\in (1,\infty)$ and $K_1 \in [1,\infty)$, $w(t,x) = w_1(t)w_2(x)$, where
$$
w_1\in A_p(\bR, dt), \quad w_2 \in A_p(\bR^d,dx), \quad [w_1]_{A_p} \leq K_1, \quad
[w_2]_{A_q} \leq K_1.
$$
\begin{enumerate}
\item For any $f\in L_{p,w_1}((0,T))$, we have
\begin{equation}
                \label{eq9.09}
\|I^\alpha f\|_{L_{p,w_1}((0,T))}\le NT^\alpha \|f\|_{L_{p,w_1}((0,T))},
\end{equation}
where $N>0$ depends only on $\alpha$, $p$, and $K_1$.

\item For any $g\in L_{p,q,w}((0,T)\times \bR^d)$, we have
\begin{equation}
                \label{eq9.25}
\|I^\alpha g\|_{L_{p,q,w}((0,T)\times \bR^d)}\le NT^\alpha \|g\|_{L_{p,q,w}((0,T)\times \bR^d)},
\end{equation}
where $N>0$ depends only on $\alpha$, $p$, $q$, $K_1$.
\end{enumerate}
\end{lemma}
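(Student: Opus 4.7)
The plan is to dominate $I^\alpha$ pointwise by a one-sided Hardy--Littlewood maximal function in $t$, and then invoke the classical Muckenhoupt weighted bound for (a) and Theorem \ref{thm0225_1} (for the strong maximal function) for (b).

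First I would establish the pointwise estimate
$$
|I^\alpha f(t)| \le N(\alpha)\, t^\alpha\, (\cM^- f)(t) \le N(\alpha)\, T^\alpha\, (\cM^- f)(t), \qquad t \in (0,T),
$$
where $(\cM^- f)(t) = \sup_{\rho > 0} \rho^{-1}\int_{t-\rho}^{t}|f(s)|\,ds$ denotes the backward one-sided maximal function (with $f$ extended by zero to $\bR$). To obtain it, change variables $r = t - s$ and split $\int_0^t r^{\alpha-1}|f(t-r)|\,dr$ into dyadic annuli $r \in (t\,2^{-k-1},\,t\,2^{-k}]$. On each annulus $r^{\alpha-1} \le (t\,2^{-k-1})^{\alpha-1}$ (since $\alpha - 1 < 0$) and $\int_{t-t/2^k}^{t} |f| \le (t/2^k)\,(\cM^- f)(t)$, so each piece contributes at most a constant times $t^\alpha\, 2^{-k\alpha}\,(\cM^- f)(t)$; summing the geometric series $\sum_{k\ge 0}2^{-k\alpha}$ gives the claim.

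For part (a), since $\cM^- f$ is pointwise controlled by twice the two-sided Hardy--Littlewood maximal function on $\bR$, the classical Muckenhoupt theorem for $A_p(\bR,dt)$ weights (with norm depending only on $p$ and $[w_1]_{A_p} \le K_1$), combined with the pointwise bound above, immediately gives \eqref{eq9.09}.

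For part (b), applying the same pointwise argument with $x$ held fixed yields $|I^\alpha g(t,x)| \le N T^\alpha (\cM^-_t g)(t,x)$, where $\cM^-_t$ acts only in the $t$-variable. It therefore suffices to show $(\cM^-_t g)(t,x) \le (\cS\cM g)(t,x)$ at a.e. $(t,x)$, after extending $g$ by zero to $\bR \times \bR^d$; Theorem \ref{thm0225_1} then finishes the proof. To verify this pointwise domination, I would observe that for any $\rho, \varepsilon, \delta > 0$ the set $(t - \rho, t + \delta) \times B_\varepsilon(x) = Q_{(\rho+\delta)^{\alpha/2},\varepsilon}(t+\delta,x)$ is an admissible cylinder containing $(t,x)$, and by Fubini together with Lebesgue's differentiation theorem applied in the spatial variable to $z \mapsto \int_{t-\rho}^{t+\delta}|g(u,z)|\,du$, its $|g|$-average converges to $\rho^{-1}\int_{t-\rho}^{t}|g(s,x)|\,ds$ as $\varepsilon, \delta \to 0^+$. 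Taking the supremum over $\rho$ yields the domination. This last step is the only mildly delicate part of the argument; beyond Lebesgue differentiation, no substantive obstacle arises.
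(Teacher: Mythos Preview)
Your proposal is correct. For part (a) you take essentially the same route as the paper: a pointwise domination $|I^\alpha f(t)|\le N T^\alpha \cM f(t)$ followed by the weighted Hardy--Littlewood theorem. The only cosmetic difference is that the paper obtains the pointwise bound by integrating by parts against $F(s)=\int_0^s|f(t-r)|\,dr$ (after scaling to $T=1$), while you use a dyadic annular decomposition; these are interchangeable.

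For part (b) there is a genuine, if minor, difference. The paper does not pass through the strong maximal function at all: it simply applies part (a) for each fixed $x$, uses Fubini to get \eqref{eq9.25} when $p=q$, and then invokes the Rubio de Francia extrapolation theorem for general $p,q$. You instead bound the time-only maximal operator $\cM^-_t$ pointwise by $\cS\cM$ via a Lebesgue-differentiation argument and then appeal to Theorem~\ref{thm0225_1}. Your step is correct (the null-set bookkeeping is handled by restricting to rational $\rho,\delta$ and using Fubini, as you indicate), but it is slightly more technical than the paper's two-line Fubini-plus-extrapolation argument. On the other hand, your approach avoids invoking extrapolation directly, trading it for Theorem~\ref{thm0225_1} (whose proof in the paper itself rests on extrapolation). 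Either way the result follows.
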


\begin{proof}
By scaling, we may assume that $T=1$.
Indeed, when scaling, recall that $[w_1(T t)]_{A_p} = [w_1(t)]_{A_p}$.
We extend $f$ to be zero when $t\notin (0,T)$.
Set
$$
F(s) = \int_0^s |f(t-r)|\,dr,
$$
which is absolutely continuous.
Then, for $t \in [0,1]$,
$$
|I^\alpha f(t)| \leq \frac{1}{\Gamma(\alpha)} \int_0^1 s^{\alpha-1} |f(t-s)| \, ds = \frac{1}{\Gamma(\alpha)} \int_0^1 s^{\alpha-1} F'(s) \, ds,
$$
where
\begin{align*}
&\int_0^1 s^{\alpha-1} F'(s) \, ds = \lim_{\varepsilon \searrow 0}\int_\varepsilon^1 s^{\alpha-1} F'(s) \, ds\\
&= \lim_{\varepsilon \searrow 0} \left( F(1) - \varepsilon^{\alpha-1}F(\varepsilon) + (1-\alpha) \int_\varepsilon^1 s^{\alpha-2} F(s) \, ds \right).
\end{align*}
Since
$$
s^{-1}F(s) \leq 2 \cM f(t)
$$
for any $s \in (0,1]$,
where
$$
\cM f(t) = \frac{1}{2r}\int_{t-r}^{t+r} |f(s)| \, ds,
$$
we see that
$$
\int_0^1 s^{\alpha-1} F'(s) \, ds \leq N \cM f(t), \quad \text{i.e.}, \quad
|I^\alpha f(t)| \leq N \cM f(t),
$$
where $N=N(\alpha)$.
Then \eqref{eq9.09} follows from the Hardy--Littlewood maximal function theorem with $A_p$ weights. Finally, we get \eqref{eq9.25} by using the Fubini theorem and \eqref{eq9.09} when $p=q$, and the general case by using the extrapolation theorem. See \cite[Theorem 2.5]{MR3812104}.
The lemma is proved.
\end{proof}

\begin{lemma}
							\label{lem0226_2}
Let $\alpha \in (0,1]$, $T \in (0,\infty)$, $p \in (1,\infty)$, $K_1 \in [1,\infty)$, $w = w_1(t)w_2(x)$, where
$$
w_1(t) \in A_p(\bR,dt), \quad w_2(x) \in A_p(\bR^d,dx), \quad [w_1]_{A_p} \leq K_1, \quad [w_2]_{A_p} \leq K_2.
$$
For $u \in \bH_{p,q,w,0}^{\alpha,2}\left((0,T) \times \bR^d\right)$, we have
$$
\|u\|_{p,q,w} \leq N T^\alpha\|\partial_t^\alpha u\|_{p,q,w},
$$
where $N = N(\alpha,p,q,K_1)$ and $\|\cdot\|_{p,q,w} = \|\cdot\|_{L_{p,q,w}\left((0,T) \times \bR^d\right)}$.
\end{lemma}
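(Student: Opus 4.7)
The plan is to reduce the estimate to the boundedness of the Riemann--Liouville fractional integral $I^\alpha$ on $L_{p,q,w}$, which has already been established in Lemma \ref{lem0226_1}(2). The key identity is
\begin{equation*}
u(t,x) = I^\alpha \partial_t^\alpha u(t,x)
\end{equation*}
for any $u \in \bH_{p,q,w,0}^{\alpha,2}\left((0,T)\times\bR^d\right)$. Once this identity is established, Lemma \ref{lem0226_1}(2) gives immediately
\begin{equation*}
\|u\|_{p,q,w} = \|I^\alpha \partial_t^\alpha u\|_{p,q,w} \leq N T^\alpha \|\partial_t^\alpha u\|_{p,q,w},
\end{equation*}
which is the desired bound.

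To justify the identity, I would first verify it on the dense subclass of test functions. For $u_n \in C_0^\infty([0,T]\times\bR^d)$ with $u_n(0,x) = 0$, in the case $\alpha \in (0,1)$ we have $\partial_t^\alpha u_n = \partial_t I_0^{1-\alpha} u_n$ by definition, and applying $I_0^\alpha$ followed by the semigroup property $I_0^\alpha I_0^{1-\alpha} = I_0^1$ together with $u_n(0,\cdot)=0$ yields
\begin{equation*}
I_0^\alpha \partial_t^\alpha u_n = I_0^\alpha I_0^{1-\alpha} \partial_t u_n = I_0^1 \partial_t u_n = u_n.
\end{equation*}
For $\alpha = 1$ this is just the fundamental theorem of calculus. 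Then, for a general $u \in \bH_{p,q,w,0}^{\alpha,2}$, we pick an approximating sequence $\{u_n\}$ from the definition of the space, so that \eqref{eq0316_01} holds. Lemma \ref{lem0226_1}(2) and the continuity of $I^\alpha$ on $L_{p,q,w}$ then let us pass to the limit in $u_n = I^\alpha \partial_t^\alpha u_n$ to obtain $u = I^\alpha \partial_t^\alpha u$ in $L_{p,q,w}$.

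For the case $\alpha = 1$, the statement of Lemma \ref{lem0226_1}(2) is written for $\alpha \in (0,1)$, but the same conclusion $\|I^1 g\|_{p,q,w} \leq NT \|g\|_{p,q,w}$ follows from the trivial pointwise estimate $|I^1 g(t,\cdot)| \leq T \cdot \cM_t g(t,\cdot)$, where $\cM_t$ denotes the one-dimensional Hardy--Littlewood maximal operator in the time variable, combined with the $A_p$-weighted maximal function theorem applied slicewise in $t$ and then the extrapolation theorem for the mixed-norm case (exactly as in the proof of Lemma \ref{lem0226_1}).

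The main obstacle here is really only bookkeeping: the identity $u = I^\alpha \partial_t^\alpha u$ is a classical fact for smooth functions with zero initial trace, and the content of the lemma is essentially Hardy-type inequality for $I^\alpha$ on weighted mixed-norm spaces. The one subtle point is making sure that the semigroup identity $I_0^\alpha I_0^{1-\alpha} = I_0^1$ may be used at the smooth level before passing to the limit, which is why I reduce first to the approximating sequence $\{u_n\}$ rather than trying to manipulate $\partial_t^\alpha u$ directly.
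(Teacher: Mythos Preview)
Your proof is correct and follows essentially the same route as the paper: reduce to smooth functions with zero initial trace, use the identity $u = I^\alpha \partial_t^\alpha u$ (the paper cites \cite[Lemma~A.4]{MR3899965} for this, while you derive it from the semigroup property), and then apply Lemma~\ref{lem0226_1}(2). For $\alpha=1$ the paper also bounds $|u(t,x)|\le 2T\,\cM u_t(t,x)$ pointwise and invokes the weighted maximal function theorem, exactly as you do.
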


\begin{proof}
We first consider the case $\alpha \in (0,1)$.
Since $u \in \bH_{p,q,w,0}^{\alpha,2}\left((0,T) \times \bR^d\right)$, we further assume that $u \in C^\infty_0\left([0,T] \times \bR^d\right)$ with $u(0,\cdot) = 0$.
Then by \cite[Lemma A.4]{MR3899965} and Lemma \ref{lem0226_1}
$$
\|u\|_{p,q,w} = \|I^\alpha \partial_t^\alpha u\|_{p,q,w} \leq N T^\alpha \|\partial_t^\alpha u\|_{p,q,w},
$$
where $N=N(\alpha,p,q,K_1)$.

For $\alpha = 1$,
since
$$
|u(t,x)| \leq \int_0^t |u_t(s,x)| \, ds \leq 2 t \cM u_t(t,x) \leq 2 T\cM u_t(t,x),
$$
the desired inequality follows as in the proof of Lemma \ref{lem0226_1}.
\end{proof}

We are now ready to present the proof of Theorem \ref{thm0210_1}.

\begin{proof}[Proof of Theorem \ref{thm0210_1}]
We first prove the estimate \eqref{eq0226_01}.
We only consider $\alpha \in (0,1)$ because the case $\alpha = 1$ is simpler.
By Corollary \ref{cor0221_01} and the interpolation inequality used in the proof of Corollary \ref{cor0221_01}, it suffices to show that
\begin{equation}
							\label{eq0226_02}
\|u\|_{p,q,w} \leq N \|f\|_{p,q,w}.
\end{equation}
By extending $u$ and $f$ as zero for $t < 0$, we observe (see \cite[Lemma 3.5]{MR3899965}) that
$$
-\partial_t^\alpha u + a^{ij}(t,x)D_{ij} u + b^i D_iu + cu = f
$$
in $(S,T) \times \bR^d$ for any $S \leq 0$, where
$$
\partial_t^\alpha u = \partial_t I^{1-\alpha}_S u.
$$
Take a positive integer $m$ to be specified below and set
$$
s_j = \frac{jT}{m}, \quad j = -1, 0, 1, 2,\ldots, m.
$$
We then take cutoff functions $\eta_j \in C^\infty(\bR)$, $j=0,1,2,\ldots,m-1$, such that
$$
\eta_j = \left\{
\begin{aligned}
1 \quad &\text{for} \quad t \geq s_j,
\\
0 \quad &\text{for} \quad t \leq s_{j-1},
\end{aligned}
\right.
\quad |\eta_j'|\le 2m/T.
$$
Similar to the proof Proposition \ref{prop2},
we see that $u\eta_j\in \bH^{\alpha,2}_{p,q,w,0}\left((s_{j-1},s_{j+1}) \times \bR^d\right)$ and satisfies
\begin{equation}
							\label{eq0302_01}
-\partial_t^\alpha(u \eta_j) + a^{ij} D_{ij}(u \eta_j) + b^i D_i (u \eta_j) + c(u \eta_j)
= f \eta_j + h_j
\end{equation}
in $(s_{j-1},s_{j+1}) \times \bR^d$,
where $\partial_t^\alpha = \partial_t I_{s_{j-1}}^{1-\alpha}$ and
$$
h_j(t,x) = \frac{\alpha}{\Gamma(1-\alpha)} \int_{-\infty}^t(t-s)^{-\alpha-1}\left( \eta_j(s) - \eta_j(t) \right) u(s,x) \, ds.
$$
Since $\eta_j(t) = 1$ for $t \geq s_j$,
$$
h_j(t,x) = \frac{\alpha}{\Gamma(1-\alpha)} \int_{-\infty}^t(t-s)^{-\alpha-1}\left( \eta_j(s) - \eta_j(t) \right) u(s,x) \chi_{s \leq s_j} \, ds
$$
for $t \in (s_{j-1},s_{j+1})$.
In particular,
$$
h_0(t,x) = 0,
$$
and, for $j = 1,2,\ldots,m-1$,
\begin{align*}
&h_j(t,x) = \frac{\alpha}{\Gamma(1-\alpha)} \int_0^t(t-s)^{-\alpha-1}\left( \eta_j(s) - \eta_j(t) \right) u(s,x) \chi_{s \leq s_j} \, ds\\
&\leq \frac{2m}{T} \frac{\alpha}{\Gamma(1-\alpha)} \int_0^t (t-s)^{-\alpha}|u(s,x)| \chi_{s \leq s_j} \, ds
= \frac{2m\alpha}{T} I_0^{1-\alpha} | u(\cdot,x) \chi_{\cdot \leq s_j}|(t).
\end{align*}
By Lemma \ref{lem0226_1}, for $j=1,2,\ldots,m-1$,
\begin{align*}
&\|h_j\|_{p,q,w,(s_{j-1},s_{j+1})} \leq \|h_j\|_{p,q,w,(0,s_{j+1})}\\
&\leq \frac N T\left(\frac{(j+1)T}{m}\right)^{1-\alpha} \|u(t,x)\chi_{t \leq s_j}\|_{p,q,w,(0,s_{j+1})}
\leq N T^{-\alpha}\|u\|_{p,q,w,(0,s_j)}.
\end{align*}
Here and in the sequel we denote $\|\cdot\|_{p,q,w,(\tau_1,\tau_2)} = \|\cdot\|_{L_{p,q,w}\left((\tau_1,\tau_2) \times \bR^d\right)}$.
This estimate combined with Lemma \ref{lem0226_2} and Corollary \ref{cor0221_01} applied to \eqref{eq0302_01} shows that
\begin{align*}
&\|u\|_{p,q,w,(s_j, s_{j+1})} \leq \|u \eta_j\|_{p,q,w,(s_{j-1}, s_{j+1})}
\leq N \left(\frac{T}{m}\right)^\alpha \|\partial_t^\alpha (u\eta_j)\|_{p,q,w,(s_{j-1}, s_{j+1})}\\
&\leq N_0 \left(\frac{T}{m}\right)^\alpha
\|f\eta_j\|_{p,q,w,(s_{j-1}, s_{j+1})} + N_0m^{-\alpha} \|u\|_{p,q,w,(0,s_j)}\\
&\quad + N_1 \left(\frac{T}{m}\right)^\alpha \|u\|_{p,q,w,(s_{j-1}, s_j)} + N_1 \left(\frac{T}{m}\right)^\alpha \|u\|_{p,q,w,(s_j, s_{j+1})},
\end{align*}
where $N_0 = N_0(d,\delta,\alpha,p,q,K_1)$ and $N_1 = N_1(d,\delta,\alpha,p,q,K_1,K_0,R_0)$.
By taking a sufficiently large integer $m$ so that
$$
N_1\left(\frac{T}{m}\right)^\alpha < \frac{1}{2},
$$
we see that
$$
\|u\|_{p,q,w,(s_j,s_{j+1})} \leq N \|f\|_{p,q,w,(0,s_{j+1})} + N \|u\|_{p,q,w,(0,s_j)},
$$
where $N = N(d,\delta,\alpha,p,q,K_1,K_0,R_0,T)$ and $j=0,1,\ldots,m-1$.
Upon noting that $\|u\|_{p,q,w,(0,s_0)} = 0$ and using induction, we arrive at \eqref{eq0226_02}.

To prove the existence result, one can use the results in \cite{MR3899965} for the unmixed case without weights and the argument in \cite[Section 8]{MR3812104}, or alternatively use the a prior estimate proved above and the solvability of a simple equation presented in \cite{arXiv:1911.07437}.
\end{proof}

\bibliographystyle{plain}

\begin{thebibliography}{10}

\bibitem{B86}
Jawerth, Bj\"orn.
\newblock Weighted inequalities for maximal operators: linearization, localization and factorization.
\newblock {\em Amer. J. Math.} 108(2):361--414, 1986.


\bibitem{MR1486629}
L.~A. Caffarelli and I.~Peral.
\newblock On {$W^{1,p}$} estimates for elliptic equations in divergence form.
\newblock {\em Comm. Pure Appl. Math.}, 51(1):1--21, 1998.

\bibitem{CP92}
Philippe Cl\'{e}ment and Jan Pr\"{u}ss.
\newblock Global existence for a semilinear parabolic Volterra equation.
\newblock {\em Math. Z.} 209(1):17--26, 1992.

\bibitem{MR2771670}
Hongjie Dong and Doyoon Kim.
\newblock On the {$L_p$}-solvability of higher order parabolic and elliptic
  systems with {BMO} coefficients.
\newblock {\em Arch. Ration. Mech. Anal.}, 199(3):889--941, 2011.

\bibitem{MR3812104}
Hongjie Dong and Doyoon Kim.
\newblock On {$L_p$}-estimates for elliptic and parabolic equations with
  {$A_p$} weights.
\newblock {\em Trans. Amer. Math. Soc.}, 370(7):5081--5130, 2018.

\bibitem{MR3899965}
Hongjie Dong and Doyoon Kim.
\newblock {$L_p$}-estimates for time fractional parabolic equations with
  coefficients measurable in time.
\newblock {\em Adv. Math.}, 345:289--345, 2019.

\bibitem{MR4030286}
Hongjie Dong and Doyoon Kim.
\newblock {$L_p$}-estimates for time fractional parabolic equations in
  divergence form with measurable coefficients.
\newblock {\em J. Funct. Anal.}, 278(3):108338, 66, 2020.

\bibitem{arXiv:1806.00077}
N.~V.~Krylov and Hongjie~Dong.
\newblock Fully nonlinear elliptic and parabolic equations in weighted and mixed-norm sobolev spaces.
\newblock {\em  Calc. Var. Partial Differential Equations}, 58(4):Art. 145, 32 pp, 2019.

\bibitem{MR2644213}
Doyoon Kim.
\newblock Parabolic equations with partially {BMO} coefficients and boundary
  value problems in {S}obolev spaces with mixed norms.
\newblock {\em Potential Anal.}, 33(1):17--46, 2010.

\bibitem{MR3581300}
Ildoo Kim, Kyeong-Hun Kim, and Sungbin Lim.
\newblock An {$L_q(L_p)$}-theory for the time fractional evolution equations
  with variable coefficients.
\newblock {\em Adv. Math.}, 306:123--176, 2017.

\bibitem{MR2304157}
N.~V. Krylov.
\newblock Parabolic and elliptic equations with {VMO} coefficients.
\newblock {\em Comm. Partial Differential Equations}, 32(1-3):453--475, 2007.

\bibitem{MR2352490}
N.~V. Krylov.
\newblock Parabolic equations with {VMO} coefficients in {S}obolev spaces with
  mixed norms.
\newblock {\em J. Funct. Anal.}, 250(2):521--558, 2007.

\bibitem{MR2435520}
N.~V. Krylov.
\newblock {\em Lectures on elliptic and parabolic equations in {S}obolev
  spaces}, volume~96 of {\em Graduate Studies in Mathematics}.
\newblock American Mathematical Society, Providence, RI, 2008.

\bibitem{arXiv:1911.07437}
Daehan~Park, Beom-Seok~Han, Kyeong-Hun~Kim.
\newblock Weighted $L_q(L_p)$-estimate with muckenhoupt weights for the
  diffusion-wave equations with time-fractional derivatives.
\newblock {\em arXiv:1911.07437}.

\bibitem{P91}
J. Pr\"uss,
\newblock {\em Quasilinear parabolic Volterra equations in spaces of integrable functions, Semigroup Theory and Evolution Equations}, Lecture Notes in Pure and Applied Mathematics, 135:401--420, 1991.

\bibitem{MR745140}
Jos{\'e}~L. Rubio~de Francia.
\newblock Factorization theory and {$A_{p}$} weights.
\newblock {\em Amer. J. Math.}, 106(3):533--547, 1984.

\bibitem{Za05}
Rico Zacher.
\newblock Maximal regularity of type $L_p$ for abstract parabolic volterra equations.
\newblock {\em J. Evol. Equ.}, 5(1):79--103, 2005.

\bibitem{MR2538276}
Rico Zacher.
\newblock Weak solutions of abstract evolutionary integro-differential
  equations in {H}ilbert spaces.
\newblock {\em Funkcial. Ekvac.}, 52(1):1--18, 2009.

\end{thebibliography}

\def\cprime{$'$}

\end{document}